\newcommand{\midplus}{|_{_+}}
\newcommand{\midminus}{|_{_-}}
\newcommand{\ph}{\varphi}
\newcommand{\bp}{{\bf p}}
\newcommand{\ak}{\alpha_K}
\newfont{\iams}{msbm9}
\crefname{hypothesis}{Hypothesis}{Hypotheses}
\title{Entropy-stable DG schemes for Boltzmann - Poisson models of collisional electronic transport along energy bands\thanks{
%Submitted to the editors September 1, 2026.
\funding{This  research was partially supported by NSF grants  NSF-RNMS DMS-1107465 (KI-Net),  DMS 1715515,  DMS 2009736, and DOE DE-SC0016283
project Simulation Center for Runaway Electron Avoidance and Mitigation. 
Support from the Oden Institute of Computational Engineering and Sciences at the University of Texas at Austin is gratefully acknowledged.
The authors gratefully acknowledge as well the support of the Hausdorff Research Institute for Mathematics (Bonn), through the Junior Trimester Program on Kinetic Theory. The first author gratefully acknowledges too start-up support from the University of Texas at San Antonio Departments of Mathematics and Physics \& Astronomy. 
}}}
\author{Jose Morales Escalante\thanks{University of Texas at San Antonio, San Antonio, TX 
  (\email{jose.morales4@utsa.edu}, \url{https://sciences.utsa.edu/faculty/profiles/morales-jose.html/}).}
\and Irene M. Gamba\thanks{Department of Mathematics, University of Texas at Austin, Austin, TX 
  (\email{gamba@math.utexas.edu}).}}
\begin{document}

\maketitle

% REQUIRED
\begin{abstract}
This work is related to the
development of entropy-stable 
positivity-preserving 
Discontinuous Galerkin (DG) methods
as a computational scheme for  Boltzmann-Poisson systems  modeling the probability density of collisional electronic transport along energy bands in  semiconductors. We pose, in momentum coordinates representing spherical / energy-angular variables, the respective Vlasov-Boltzmann equation with a linear collision operator and a singular measure, modeling scatterings as functions of the bandstructure appropriately for  hot electron nanoscale transport.
We show stability results of semi-discrete DG schemes under an entropy norm for 1D-position (2D-momentum) and 2D-position (3D-momentum), using dissipative properties of the collisional operator given its entropy inequality. The latter depends on an exponential of the Hamiltonian rather than the Maxwellian associated with only kinetic energy. For the 1D problem, knowing the analytic solution to the Poisson equation and convergence to a constant current is crucial to obtaining full stability (weighted entropy norm decreasing 
over time). For the 2D problem, specular reflection boundary conditions and periodicity are considered in estimating stability under an entropy norm. 
Regarding the positivity-preservation proofs in the DG scheme for the 1D problem, 
%inspired by  \cite{ZhangShu1}, \cite{ZhangShu2}, and \cite{CGP}, \cite{EECHXM-JCP}, 
we treat collisions as a source and find convex combinations of the transport and collision terms which guarantee positivity of the cell average of our numerical probability density at the next time. The positivity of the numerical solution to the probability density in the domain is guaranteed by applying well known limiters
%the limiters in \cite{ZhangShu1} and \cite{ZhangShu2} 
that preserve the cell average modifying the slope of the piecewise linear solutions to make the function non-negative. 
The use of a spherical  coordinate system whose radial component is the momentum magnitude 
%$\vec{p}(|\vec{p}|,\mu=cos\theta,\varphi)$ 
is slightly different from choices in previous DG solvers for Boltzmann-Poisson, since the proposed DG formulation gives simpler integrals 
involving piecewise polynomials for both transport and collision.
\end{abstract}

% REQUIRED
\begin{keywords}
Entropy-stable, %positivity-preserving 
DG, Boltzmann-Poisson,
collisional 
electronic 
transport, 
energy bands
\end{keywords}

% REQUIRED
\begin{MSCcodes}
65R20, 82D37, 82C40
\end{MSCcodes}

\section{Introduction}

The Boltzmann - Poisson system is a model for electron transport in semiconductors
representing the balance of transport and collisions 
%for a more general set of collision operators as the system 
in the $(\vec{x},\vec{\bp})\in\mathbb{R}^3_{\vec{x}}\times\mathbb{R}^3_{\vec{\bp}}$ position-momentum phase space for electrons in the conduction band,
\begin{align}\label{vlasov}
\partial_t f +  \partial_{(\vec{x},\vec{\bp})}  \cdot ( f \alpha)
&=Q(f), \\[4pt]
\text{with transport vector} \qquad \qquad \qquad\qquad  
&\ \nonumber \\[4pt]
\alpha &= ( \partial_{\vec{\bp}} H
, - \partial_{\vec{x}} H) = ( \partial_{\vec{\bp}} \varepsilon
, q \partial_{\vec{x}}  \Phi) \in \mathbb{R}_{\vec{x}}^3\times\mathbb{R}^3_{\vec{\bp}},  \\[4pt]
\text{with the Hamiltonian} \qquad \qquad \qquad\qquad  &\ \nonumber \\[4pt]
H(\vec{x},\vec{\bp},t) &=  \varepsilon(\vec{\bp})
-  q \Phi(\vec{x},t),  \\[4pt]
\text{and collision} \qquad \qquad\qquad \qquad \qquad\qquad &\ \nonumber \\[4pt]
Q(f) = &\int_{\Omega_{\vec{\bp}}} \!\!S(\vec{\bp}\,' \rightarrow \vec{\bp} ) f' d\vec{\bp}\,'
\!- \! f \int_{\Omega_{\vec{\bp}}}\!\! S(\vec{\bp} \rightarrow \vec{\bp}\,' ) d\vec{\bp}\,',  
\end{align}
coupled to the Poisson equation for total charges
\begin{equation}\label{poisson}
-\partial_{\vec{x}} \cdot (\epsilon \partial_{\vec{x}} \Phi)(\vec{x},t) =
q \left[ N(\vec{x}) - \int_{\Omega_{\vec{\bp}}} f(\vec{x},\vec{\bp},t) d\vec{\bp} \right], \quad \vec{E}(\vec{x},t) = -\partial_{\vec{x}}\Phi(\vec{x},t),
\end{equation}

for $\vec{x}\in\Omega_{\vec{x}}\subset\mathbb{R}^3_{\vec{x}}$. 
The momentum variable is $\vec{\bp}=\hbar\vec{k}$,
$\vec{k}$ is the crystal momentum wave vector,
$\varepsilon(\vec{\bp})$ is the conduction energy band structure for electrons in the semiconductor,
$f(\vec{x},\vec{\bp},t)$ is the probability density function (pdf) in the phase space for electrons in the conduction band at a given time,
\begin{equation}\label{velocity-relation}
\dot{\vec{x}} = \vec{v}(\vec{\bp}) = \partial_{\vec{\bp}} \varepsilon (\vec{\bp}) 
\end{equation}
is the quantum mechanical electron group velocity, $q$ is the positive electric charge of a proton,   
 $\Phi(\vec{x},t)$ is the electric potential (we assume that the only force over the electrons is the self-consistent electric field, and that it is given by the negative gradient of the electric potential), $\epsilon$ is the permittivity of the material,  $N(\vec{x})$ is the  doping background (assumed fixed) in the semiconductor material, and $S(\vec{\bp}\,' \rightarrow \vec{\bp} )$ is the scattering kernel that defines the gain and loss operators whose difference gives the collision integral operator $Q(f)$.
 We will assume in this work a linear collision operator, which is valid in the regime of low electron density, as the enforcement of the Pauli exclusion principle in this case via the collision operator structure is not needed.
 {We assume $\Omega_{\vec{x}}$ is $\Pi_{P}$ (a torus corresponding to periodic boundary conditions), a bounded domain where the Poisson problem is well posed, with periodic boundary conditions. That means the solution of our Boltzmann-Poisson problem is well posed and $f\in L_2^1$, $\Phi \in L^2$.} 
 We stress out that the transport terms have an underlying Hamiltonian structure, where this Hamiltonian transport vector is related
 to the group velocity $\vec{v}$ of the electron wave and the electric force  $\vec{F}$ over it, 
\begin{equation*}
\alpha({\vec{x}},{\vec{\bp}}) = (\dot{\vec{x}},\dot{\vec{\bp}}) =
 (\vec{v}(\vec{\bp}), \vec{F}(\vec{x},t) ) = (\partial_{\vec{\bp}}\varepsilon(\vec{\bp}), q\partial_{\vec{x}} \Phi(\vec{x},t) ) = 
(\partial_{\vec{\bp}}H, -\partial_{\vec{x}} H ) 
\in \Omega_{\vec{x}}\times\mathbb{R}^3_{\vec{\bp}},
 \end{equation*}
 and then the transport part of the Boltzmann eq. for collisional electron transport in semiconductors has a Hamiltonian structure given by a Poisson bracket, as in 
\vspace{-0.05cm}
 \begin{align}
  \partial_t f +  \{f,H\}
&= Q(f), \nonumber \\
\{f,H\} &= 
\partial_{\vec{x}}f \cdot \partial_{\vec{\bp}}H -
\partial_{\vec{\bp}}f \cdot \partial_{\vec{x}}H.
 \end{align}

For many quantum collision mechanisms, such as in semiconductors,
the scattering kernel
%\textbf{
$S(\vec{\bp}\,' \rightarrow \vec{\bp} )$ 
depends on the difference between energies
$\varepsilon(\vec{\bp}) - \varepsilon(\vec{\bp}\,')$, as
in collision operators of the form  
$\delta( \varepsilon(\vec{\bp}) - \varepsilon(\vec{\bp}\,') + l\hbar w_p)$
%}
for electron - phonon collisions. This form is related to the energy conservation given by Planck's law, in which the jump in energy from one state to another is balanced with the energy of a phonon. The mathematical consequence of this is that 
we can obtain much simpler expressions for the integration of the collision operator if we express the momentum in curvilinear coordinates that involve the energy $\varepsilon(\vec{p})$ as one of the variables \cite{MP}, \cite{carr03},  \cite{cgms06}, \cite{CGMS-CMAME2008, MeG-JCP2018}. The other two momentum coordinates could be either an orthogonal system in the level set of energies, orthogonal to the energy gradient itself, or angular coordinates which are known to be orthogonal to the energy in the limit of low energies close to a local conduction band minimum, such as $(\mu, \varphi)$, the cosine of the polar angle and the azimuthal angle, respectively.

This gives both physical and mathematical motivations to pose the
Boltzmann Equation for semiconductors in curvilinear coordinates for the momentum 
$$\vec{p}(p_1,p_2,p_3) = \vec{\bp}(p_x,p_y,p_z),$$ 
where 
$$\vec{p}= (p_1,p_2,p_3),$$ 
and
$$\vec{\bp}=(p_x,p_y,p_z),$$ 
to later on choose
a particular case of curvilinear coordinates such as $(\varepsilon,\mu,\varphi)$. 
We will assume in the rest of this paper that our
system of curvilinear coordinates for the momentum 
 is orthogonal. This happens in particular for the case $(\varepsilon,\mu,\varphi)$ in
which $\varepsilon(|\vec{p}|)$ is monotone increasing,
so this set of coordinates is equivalent to the spherical coordinate representation $(|\vec{p}|,\mu,\varphi)$.  

The Boltzmann Equation for semiconductors (and more general forms of linear collisional plasma models as well) can be written in orthogonal curvilinear coordinates for the momentum vector $\vec{p}(p_1,p_2,p_3) =\vec{\bp}(p_x,p_y,p_z)$, 
under the velocity relation \eqref{velocity-relation}, 
with 
$h_j = \left| \frac{\partial \vec{\bp}}{ \partial p_j} \right|,
\,  j=1,2,3$, generate  the Jacobian determinant of the transformation from the curvilinear coordinates for the momentum to the respective cartesian ones by setting $J := J_{\frac{\partial {\vec{\bp}}}{\partial{\vec{p}}}}  = h_1 h_2 h_3$
%\frac{\partial \vec{\bp}(p_x,p_y,p_z)}{\partial\vec{p}(p_1,p_2,p_3)}$,
%$J' = \frac{\partial \vec{p}\,'}{\partial(p_1',p_2',p_3')}$
and $\hat{e}_j$ the unitary vectors associated to each 
curvilinear coordinate $p_j$ at the point $(p_1,p_2,p_3)$,
%\begin{equation}
%\partial_t fJ +  \partial_{\vec{x}}   \cdot \left( J f  
%\vec{v} \right)
%+ q  \left[
%\partial_{p_1}\left(\frac{ Jf \partial_{\vec{x}} \Phi\cdot\hat{e}_{p_1}}{h_1} \right)  +
%\partial_{p_2}\left( \frac{Jf \partial_{\vec{x}} \Phi\cdot\hat{e}_{p_2}}{h_2} \right)  +
%\partial_{p_3}\left( \frac{ Jf \partial_{\vec{x}} \Phi\cdot\hat{e}_{p_3}}{h_3} \right)  
%\right]
%= C(f) ,
%\end{equation}
\begin{align}\label{vlasov-op}
&\ \ \  \partial_t fJ +  \partial_{\vec{x}}   \cdot \left( J f  
\vec{v} \right)
+ q  \nabla_{\vec{x}}\Phi \cdot \nabla_{\vec{v}} (J f)= C(f), \nonumber
\\[6pt]
\text{with}&\ \\[6pt]
 \nabla_{\vec{x}}\Phi \cdot \nabla_{\vec{v}} (J f) \! &:= \!  \left[
\partial_{p_1}\left(\frac{ Jf \partial_{\vec{x}} \Phi\cdot\hat{e}_{p_1}}{h_1} \right) \! +\! 
\partial_{p_2}\left( \frac{Jf \partial_{\vec{x}} \Phi\cdot\hat{e}_{p_2}}{h_2} \right) \!  +\! 
\partial_{p_3}\left( \frac{ Jf \partial_{\vec{x}} \Phi\cdot\hat{e}_{p_3}}{h_3} \right)  \! 
\right],\nonumber
\end{align}
and a head on collisional form,
\begin{equation}\label{collision op}
C(f) \!:=  \!J Q(f)
 \!= \! J   \!\!\int_{\Omega_{\vec{p}}}  \! \!S(\vec{p}\,' \rightarrow \vec{p} )     
J' f' dp_1' dp_2' dp_3'
 \!- \! Jf  \!  \!\int_{\Omega_{\vec{p}}} \! \! S(\vec{p} \rightarrow \vec{p}' ) 
J'  dp_1' dp_2' dp_3' .
\end{equation}

We notice that we have expressed the Boltzmann Equation in divergence form with respect to the  momentum orthogonal curvilinear  coordinates.
We can write it even more compactly as
\begin{equation}
\partial_t (Jf)  +   \partial_{\vec{x}}   \cdot \left( J f  
\vec{v}(\vec{p}\,) \right)
+  \sum_{j=1}^3 
\partial_{p_j}\left( Jf
\frac{  q\partial_{\vec{x}} \Phi(\vec{x},t) \cdot\hat{e}_{p_j}}{h_j} \right)  
= C(f) .
\end{equation}

For $J \geq 0$, we can interpret $Jf(\vec{x},p_1,p_2,p_3,t)$ as a probability density function in the phase space $(\vec{x},p_1,p_2,p_3)$.
This Boltzmann Eq. is a general form for orthogonal curvilinear momentum  coordinates, from which previous energy-angular and spherical coordinate systems (such as in \cite{CGMS-CMAME2008} and \cite{MGCMSC-CMAME2017}) can be derived. For the spherical one in \cite{MGCMSC-CMAME2017}, the orthogonal curvilinear system is
\begin{equation}
(r \propto k^2,\mu=\cos\theta || p_x, \varphi=\arctan(p_z/p_y)), 
\end{equation}
 where $\theta$ is the polar angle with respect to the $p_x$-axis, $\varphi$ is the azimuthal angle, and $r \propto k^2$, which is proportional to the energy in the limit as $|k| \rightarrow 0$, approaching the conduction band local minimum. The energy-angular one in \cite{CGMS-CMAME2008} is 
 \begin{equation}
   (w \propto \varepsilon,\mu=\cos\theta || p_x, \varphi=\arctan(p_z/p_y)), 
 \end{equation}
 with
$w \propto \varepsilon$, under the assumption that 
$\varepsilon(|p|)$ is a Kane band model. 
The numerical method we will study in this work in connection to our Boltzmann - Poisson system is the Discontinuous Galerkin (DG) Finite Element Method (FEM), to be explained in Section 3. It was proposed by Reed and Hill \cite{Reed-Hill} for hyperbolic equations in the context of neutron transport. It is defined such that its numerics captures the mathematics of the hyperbolic transport by defining the so-called fluxes in such a way that the information propagates numerically in the same fashion as a hyperbolic equation propagates information analytically. In the particular context of electron collisional transport in semiconductors, the DG method has been used in works such as \cite{CGMS-CMAME2008, MGCMSC-CMAME2017, MeG-JCP2018} after an evolution of the numerical methods used to solve it that transitioned from Upwind Finite Differences \cite{MP} to WENO schemes \cite{carr03}, \cite{cgms06} and finally to mainly two schools, one related to the aforementioned development on DG methods for Boltzmann - Poisson, and the other related to Spherical Harmonics Expansions,
for which a good overview can be found in
\cite{Jungemann}.
%\textbf
{
There is a particular kind of DG methods, called Positivity Preserving (PPDG), which is designed to preserve the positivity of the functions that are the unknown to be solved for, usually having the interpretation of densities needed to be non-negative, 
such as fluid densities or probability density functions.
These positivity preserving DG methods were developed by Zhang and Shu \cite{ZhangShu1}, \cite{ZhangShu2}. The main idea is that, given a positive initial condition, PPDG is such that,
after a time iteration, the cell averages are preserved to be positive and, if the function is negative for a given region,
a limiter is applied in the interval(s) of interest, modifying the slope in such a way that after the modification the function is non-negative in the interval(s). It was designed in the context of the compressible Euler equations for fluids. 
Later works have incorporated this idea in different application contexts. Work related to our current problem was developed first in \cite{CGP} for linear Vlasov - Boltzmann collisional transport equations 
in cartesian coordinates under quadratically confined electrostatic potentials, where the Boltzmann collision operator was linear (with bounded scattering functions), and later in \cite{EECHXM-JCP} 
for a conservative phase space collisionless advection of neutral particles in curvilinear coordinates. }\\

On the other hand, regarding works on entropy dissipation laws and 
stability under entropy norms for Boltzmann semiconductor models, analytical results have been obtained, for example, by \cite{DBLP:journals/vlsi/Levermore98}, where moment closure hierarchies were studied for the Boltzmann - Poisson equation, getting an $f \log f$ type entropy dissipation law. 
We would also like to mention the work on \cite{BenAbdallah-Dolbeaut} that
studies the relative entropies for kinetic equations in bounded domains,
obtaining results on irreversibility, stationary solutions, and uniqueness.
Regarding specifically numerical works on this kind of problems, 
\cite{23ad23baaef84440b4f4005476604390}
studies the semiconductor Boltzmann equation based on spherical harmonics expansions and entropy discretizations. 
The convergence of numerical moment methods 
for linear kinetic equations was studied in \cite{Schmeiser:Convergence}. 
A mixed spectral-difference method for the steady state Boltzmann - Poisson system is presented in \cite{Ringhofer:Mixed}. 
DG discretizations of first-order systems of conservation laws derivable as moments of the Boltzmann equation with Levermore closure were considered in \cite{BARTH20063311} using  energy analysis techniques; the problem does not include acceleration terms as no force field is considered.
We should also mention the work in \cite{HLiu}, where a
high order DG scheme for solving nonlinear Fokker-Planck equations with a gradient flow structure was proposed, 
and it was shown to satisfy a discrete entropy dissipation law and to preserve steady-states, enforcing the positivity of the numerical solutions in the algorithm. \\

Our current work extends the contributions above mentioned by studying a Boltzmann - Poisson system for collisional electronic transport
%which requires
%the incorporation of an unknown time dependent electric field, obtained in the 1D case by solving the Poisson equation for our charge density, and 
in which the momentum is studied in curvilinear coordinates determined  by the energy band structure appearing both in the transport and the electron-phonon collision operator. The latter models the discrete energy jumps with a Dirac delta distribution scattering term, which is  clearly not a smooth bounded function and therefore more complex than previous collision operators studied in the context of entropy stable 
%\textbf{
and positivity preserving %}
DG methods for collisional electron transport.  
We study semi-discrete stability properties (under an entropy norm) of the  DG method for the Boltzmann - Poisson problem by means of the (semi-discrete)
Hamiltonian energy functional, where, without loss of generality, we assume a suitable discretization for the Poisson equation, yielding an accurate mass preserving approximation for both the electrostatic potential and corresponding electric field, and later we present the proof of positivity preservation for the DG scheme for BP. 
%\textbf{
For the positivity preserving work, we introduce the 1D-position with 3D-momenta space problem with azimuthal symmetry, which can be reduced to a  3-dimensional set of coordinates (1D in position, 2D in momentum) plus time when modeling a 1D diode under such symmetry assumptions in momentum space.
%}

The type of approximations and numerical schemes implementations  for the Poisson part of the Boltzmann - Poisson system (with the Boltzmann part in the usual $L_2$ Jacobian weighted norm, without enforcing the preservation of positivity) have been studied in  \cite{MGCMSC-CMAME2017,MeG-JCP2018}. Our contribution is to show the stability of a DG scheme under an entropy norm for the Boltzmann - Poisson system with curvilinear momentum (the Boltzmann part using an $L_2$ norm weighted not only by the Jacobian but in addition by the exponential of the Hamiltonian) that incorporates the full time dependent Hamiltonian, in the corresponding  curvilinear coordinates, obtaining 
a stability result that gives certain control over $\partial_t f$ for the two dimensional position domain (and 3D in momentum) problem
with specular reflection and periodic boundary conditions, 
and for the one dimensional space domain problem (with two dimensional momentum under symmetry assumptions) a full stability result, meaning specifically the decay of the entropy norm over time, using the knowledge of the convergence to a constant current in the limit of this time evolution problem
and the solution to the Poisson problem in 1D.

\begin{comment}
it is not clear which mathematical difficulties are overcome by the stability and convergence analysis 
(does the conduction band structure represent a particular challenge for instance ?) 
or what the novelty is: 
the DG approximations seem to rely on upwind fluxes, 
the L2 entropy norm used for stability is standard in the study of the Boltzmann equation (see eg Ref [25]), and 
the "curvilinear momentum coordinates" essentially amount in energy/angle coordinates.
\end{comment}

The mathematical difficulties overcome by our stability and convergence analysis involve different contributions.
One of them is motivated by the fact that indeed
the conduction band structure $\varepsilon(k)$ represent a particular challenge,
because on one hand it has a functional dependance on all momentum coordinates as in $\varepsilon(k_x,k_y,k_z)$, which
is not reflected in simplified analytical models such as the parabolic and Kane band models. Our first attempt to bridge this gap is to work with a general class of band structure models (with explicit analytic function to be defined) of the form 
$\varepsilon(r)$, with $r \propto k^2$, that not only include the parabolic and Kane bands but extend beyond their specific analytic form (these two being only two particular cases of the class of functions included in $\varepsilon(r)$). 
The novelty of this paper is also partly the joint use of both
the $L_2$ entropy norm for stability together with the curvilinear momentum coordinates (not only cartesian coordinates for the momentum).

%\newpage

%\end{comment}

\subsection{Preliminaries: On the problem of energy - curvilinear coordinates}

We would like to point out that the transport vector in curvilinear momentum coordinates
(that usually represent either a spherical or energy-angular set of coordinates, and
could equally represent an energy level set type of orthogonal system of coordinates)
\begin{equation}
 \beta = J \left(\vec{v}(\vec{p}), 
  \frac{F\cdot \hat{e}_{p_1}}{h_1},
  \frac{F\cdot \hat{e}_{p_2}}{h_2},
  \frac{F\cdot \hat{e}_{p_3}}{h_3}
\right),
 \end{equation}
 with $F(\vec{x},t) = -q \vec{E}(\vec{x},t) = q \partial_{\vec{x}} \Phi(\vec{x},t)$, 
 has two important properties in the curvilinear momentum space directly related to their 
Hamiltonian nature, namely 
$ \partial\cdot\beta = 0$ and
$\beta \cdot \partial H = 0$, 
defining $\partial = \partial_{(\vec{x},p_1,p_2,p_3)}$ as the gradient in the 
%position - 
curvilinear 
%momentum 
space. 

Moreover, in the following sections we will present some physical examples to the reader in which it will be clear that the results on stability under an entropy norm for the DG scheme under study rely on these 2 properties. Once stablished for the general curvilinear momentum coordinate case, the same kind of entropy stability results to be proved in this paper would apply for any DG scheme under that formulation of a transformed Boltzmann equation. We therefore prove these two properties for the transport vector $\beta$ (multiplied by the Jacobian)  below.
For $ \beta \cdot \partial H = 0$, we have that
\begin{align}
 \beta \cdot \partial H &=
 J \left( 
 \nabla_{(p_x,p_y,p_z)} \varepsilon,
  \frac{F\cdot \hat{e}_{p_1}}{h_1},
  \frac{F\cdot \hat{e}_{p_2}}{h_2},
  \frac{F\cdot \hat{e}_{p_3}}{h_3}
\right) 
\cdot
\left(-F(\vec{x},t), 
 \partial_{(p_1,p_2,p_3)} \varepsilon
\right)
\nonumber
\\
&= J \left[
-  \nabla_{(p_x,p_y,p_z)} \varepsilon
\cdot F
 +
 F\cdot
 \left(
  \frac{ \hat{e}_{p_1}}{h_1} 
  \partial_{p_1}\varepsilon
  \frac{\hat{e}_{p_2}}{h_2}
  \partial_{p_2}\varepsilon
  \frac{\hat{e}_{p_3}}{h_3}
  \partial_{p_3}\varepsilon
\right)
\right]
= 0,
\nonumber
\end{align}
because the formula that relates the gradient with respect to the cartesian coordinates $(p_x,p_y,p_z)$ to the gradient with respect to the curvilinear momentum coordinates $(p_1,p_2,p_3)$ is precisely 
\begin{equation}
 \nabla_{(p_x,p_y,p_z)} \varepsilon
 =
 \frac{\partial \varepsilon}{ \partial{p_1}}
 \frac{ \hat{e}_{p_1}}{h_1} 
 +  \frac{\partial \varepsilon}{ \partial{p_2}}
 \frac{\hat{e}_{p_2}}{h_2}
 +  \frac{\partial \varepsilon}{ \partial{p_3}}
 \frac{\hat{e}_{p_3}}{h_3} \, .
 \end{equation}
 The second property, related to the divergence of the transport field $\beta$ in the phase space with curvilinear momentum, follows
 from the divergence free property of the transport vector in cartesian space. That is, we know that
 \begin{equation}
  \nabla_{(\vec{x},\vec{p})} \cdot 
  \left(
  \vec{v}(\vec{p}), F(\vec{x},t)
  \right) = 0
 \end{equation}
trivially due to dependance in alternate coordinates. 
However, relating the divergence with respect to the cartesian momentum coordinates to the divergence with respect to the curvilinear ones, we have 
\begin{equation}
0 = \nabla_{(p_x,p_y,p_z)} F(x,t)
=
\frac{1}{J}
\left[
\frac{\partial(F_1 J/h_1)}{\partial p_1}
+
\frac{\partial(F_2 J/h_2)}{\partial p_2}
+
\frac{\partial(F_3 J/h_3)}{\partial p_3}
\right],
\end{equation}
with $F_j = F\cdot \hat{e}_{p_j}, \, j=1,2,3$. So
\begin{align}
0 &= \partial_{(p_1,p_2,p_3)} \cdot
\left\lbrace
J \left(
F_1/h_1, F_2/h_2, F_3/h_3
\right)
\right\rbrace 
\nonumber\\
&=
\partial_{(p_1,p_2,p_3)} \cdot
\left\lbrace
J \left(
F\cdot \hat{e}_{p_1}/h_1, F\cdot \hat{e}_{p_2}/h_2, F\cdot \hat{e}_{p_3}/h_3
\right)
\right\rbrace
\nonumber\\
&=
\partial_{(p_1,p_2,p_3)} \cdot
\left\lbrace
J \left(
F\cdot \hat{e}_{p_1}/h_1, F\cdot \hat{e}_{p_2}/h_2, F\cdot \hat{e}_{p_3}/h_3
\right)
\right\rbrace
+ \nabla_{\vec{x}}\cdot(J \vec{v}(\vec{p}))
=
\partial \cdot \beta,
\nonumber
\end{align}

 with $\partial = \partial_{(\vec{x},p_1,p_2,p_3)} $,
 since $J$ only depends on the momentum. 
\section{Stability of DG schemes under entropy norms}
In this section we'll study the stability of DG schemes under entropy norms, focusing on 3D \& 5D plus time problems.
We'll make some comments as well on the  problem in general for orthogonal curvilinear momentum coordinates.
\subsection{1Dx-2Dp Diode Symmetric Problem}
%\textbf{
In the particular case of a 1D silicon diode problem, the main collision mechanisms are electron-phonon scatterings
\begin{equation}
S(\vec{p}\,' \rightarrow \vec{p}) = \sum_{j=-1}^{1} c_j \delta(\varepsilon(\vec{p}\,') - \varepsilon(\vec{p}) + j\hbar \omega), \quad c_1 = (n_{ph} + 1) K, \, c_{-1} = n_{ph} K,
\end{equation}
with $\omega$ the phonon frequency, assumed constant, and $n_{ph}=n_{ph}(\omega)$ the phonon density. $K, \, c_0$ are constants. 
%}
If we assume that the energy band just depends on the momentum norm, $\varepsilon(p), \quad p = |\vec{\bp}|$, and that the initial condition for the pdf has azimuthal symmetry, $f|_{t=0} = f_0(x,p, \mu, t), \, \partial_{\varphi} f_0 = 0$,   $\quad \vec{p} = p (\mu, \sqrt{1-\mu^2} \cos\varphi, \sqrt{1-\mu^2} \sin \varphi) $, then the dimensionality of the problem is reduced to 3D+time, that is, 1D in $x$ and 2D in $(p,\mu)$. 
Our curvilinear coordinates would be in this case
\begin{align}
 p_1 &= |\vec{\bp}| = p, \nonumber\\
 p_2 &= \mu = \cos \theta || E(x,t), \\
 p_3 &= \varphi = \arctan(p_z/p_y) , \nonumber
\end{align}
since the polar axis (reference direction) $p_x$ is chosen parallel to the direction of the 1D electric field $E(x,t)$ in the $x$-direction, %as in
$p_x || x$. 
The Jacobian determinant is% in this case 
$$
J_{ \frac{\partial \vec{\bp}}{\partial \vec{p}} } = 
| \frac{\partial (p_x,p_y,p_z)}{\partial (p,\mu,\varphi)} | = - p^2 ,
$$
where the minus sign helps to transform the integral $\int_0^\pi d\theta \cdots$ into $\int_{-1}^{1} d\mu \cdots $ as $\mu=\cos\theta=v_x/|\vec{v}|$, where
$\vec{v}(\vec{p})=(v_x,v_y,v_z)$. 
Then, the BP system for $f(x,p,\mu,t)$ and $\Phi(x,t)$ is written in spherical coordinates $\vec{p}(p,\mu,\varphi)$ for the momentum (under azimuthal symmetry assumptions) as
\begin{align}\label{BP-system}
&\partial_t f + \partial_x (f \partial_p \varepsilon \mu) + \left[ \frac{\partial_p (p^2 f \mu) }{p^2} + \frac{\partial_{\mu} (f(1-\mu^2)) }{p} \right] q\partial_x \Phi(x,t) = Q(f), \nonumber\\
-& \partial_x^2 \Phi = \frac{q}{\epsilon} \left[ N(x) - 
2\pi \int_{-1}^{+1} \int_{0}^{p_{max}} f p^2 dp d\mu, \right], \,
\quad \Phi(0) = 0, \, \Phi(L) = \Phi_0 ,
\end{align}
where the charge density is given by
\begin{equation} \nonumber
\rho(x,t) = 2\pi \int_{-1}^{+1} \int_{0}^{p_{max}} f(x,p,\mu,t) p^2 dp d\mu. 
\end{equation}
We have assumed that the permittivity $\epsilon$ is constant. 
The Poisson BVP with Dirichlet BC in 1D above has an analytic integral solution for $\Phi(x,t)$ and for $E(x,t) = -\partial_x \Phi(x,t)$ as well, which can be projected in the appropriate spaces for our numerical method. The solution is given by the integral formula \cite{MP}
\begin{align} \nonumber
\Phi(x,t) 
 &= 
%\underbrace{
\left[
\Phi_0 \!+\! 
\frac{q}{\epsilon} 
\!\int_0^L \!\!
  \left[N(x') \!-\! \rho(t,x') \right] (L \!-\! x') dx'
\!\right] \!\frac{x}{L} 
%}_{\in \, \Phi^1_h}
%\underbrace{
\!-\! \frac{q}{\epsilon}  
\int_0^x \!\!\!
  \left[N(x') \!-\! \rho(t,x') \right] (x \!-\! x') dx' \!,
%}_{\in \, \Phi^3_h}
\\[4pt]
E(x,t) 
 &= 
- \!
\left(
%\underbrace{
\frac{\Phi_0}{L} \! +\! 
\frac{q}{\epsilon} \frac{1}{L}
\int_0^L \!   
  \left[N(x') \!-\! \rho(t,x') \right] (L-x') dx'
%}_{\in \Phi_h^0}
%\underbrace{
- \frac{q}{\epsilon}
\int_0^x  \!
  \left[N(x') \!-\! \rho(t,x') \right] dx'
%}_{\in \Phi^2_h}
\right)\!.
\end{align}

Therefore, in this 1D problem we only need to concern ourselves with the Boltzmann Equation, since given the electron density we know the solution for the potential and electric field in Poisson. \\

The collision operator in this problem has the form
\begin{align*}
Q(f) &=
2\pi \Bigg[ 
\sum_{j=-1}^{+1} c_j  \int_{-1}^{+1} d\mu'  
f(x,p(\varepsilon'),\mu') \, p^2(\varepsilon') \frac{dp'}{d\varepsilon'} \Bigg|_{\varepsilon' = \varepsilon(p) + j\hbar\omega } \chi(\varepsilon(p) + j\hbar\omega) 
\\
&\ \ \qquad\qquad - 
f(x,p,\mu,t)  \sum_{j=-1}^{+1} c_j \, 2  p^2(\varepsilon') \frac{dp'}{d\varepsilon'} \Bigg|_{\varepsilon' = \varepsilon(p) - j\hbar\omega } \chi(\varepsilon(p) - j\hbar\omega)
\Bigg],
\nonumber
\end{align*}
where $\chi(\varepsilon)$ is 1 if $\varepsilon \in [0,\varepsilon_{max}]$ and 0 if $\varepsilon \notin [0,\varepsilon_{max}]$, with $\varepsilon_{max} = \varepsilon(p_{max})$. 

The domain of the Boltzmann Poisson  problem is $x\in [0,L], \, p \in [0, p_{max}], \, \mu \in [-1,+1]$, $t>0$.
Moreover, since $\varepsilon(p)$, 
then $\partial_{\vec{p}} \varepsilon = \frac{d\varepsilon}{dp} \hat{p} $.
We assume that $\varepsilon(p)$ has a positive derivative
$\frac{d\varepsilon}{dp} > 0$, and moreover that
$\varepsilon(p)$
is a continuously differentiable function. 
Therefore the Inverse Function Theorem applies 
at points inside the domain and then
the respective inverse function is continuously differentiable, 
so $p(\varepsilon)$ is a monotonic function for which
$\frac{dp}{d\varepsilon} = ( \frac{d\varepsilon}{dp} )^{-1} $ exists. %\\ 
The collision frequency is
\begin{equation*}
\nu(\varepsilon(p)) = 
\sum_{j=-1}^{+1} c_j \, 4 \pi \, 
\chi(\varepsilon(p) - j\hbar\omega) \,
\left. p^2(\varepsilon') \frac{dp'}{d\varepsilon'} \right|_{\varepsilon' = \varepsilon(p) - j\hbar\omega } 
=
\sum_{j=-1}^{+1} c_j n(\varepsilon(p) - j \hbar \omega),
\end{equation*}
where 
 the density of states with energy $\varepsilon(p) - j \hbar \omega$ is
\begin{equation*}
n(\varepsilon(p) - j \hbar \omega) = \int_{\Omega_{\vec{p}}} \delta(\varepsilon(\vec{p}\,') - \varepsilon(\vec{p}) + j \hbar \omega) \, d\vec{p}\, ' . 
\end{equation*}
%The title of Section 2.2 mentions RKDG but in this section the authors only study semi-discrete approximations,
\subsection{Semidiscrete RK-DG entropy preserving scheme 
(Boltzmann - Poisson 1Dx-2Dp)}
Before writing the weak form of the Transformed Boltzmann Equation and introducing an %\textsl
{entropy preserving}
%removed conservative as I assume this emphasized the conservation of mass momentum and energy in your previous VP problem
Runge-Kutta Discontinuous Galerkin (RKDG) scheme for the BP system (\ref{BP-system}), for $(x,p,\mu)\in \Omega=\Omega_{x}\times\Omega_{(p,\mu)}\subseteq 
%\mathbb{R}^{+}\times 
\mathbb{R}^{d}$, we need to discuss the choice of the cut-off domain in momentum space, since the probability density in momentum space may be supported in all its space of definition, while decaying at infinity fast enough to get mass, momentum, and energy bounded. 
%\textsl{Question: I would assume the $\mathbb{R}^{+}$ would be related to the time domain, in which case we need to modify this above, unless it's related to $x$ and it's indicating one's working in half of the real line. Could you please verify my modification to your previous version is correct? Thanks. }
The problem can also be restricted to the first position dimension $\vec{x}=(x,0,0)$, $\mathbf{E}(x,t)=(E(x,t),0,0)$. 
%The conservation properties are proved to be well satisfied if we choose a piecewise polynomial approximation space covering $d+2$ collision invariants.
%Commented this as it seems to me the conservation properties
%of momentum and energy don't hold in our project. 
%
%}
Since for $f(x,p,\mu), \, g(x,p,\mu) $ we have that
\begin{equation*}
\int_{\Omega_x} \int_{\Omega_{\vec{p}}} f g \, d\vec{p} dx = 
2\pi \, \int_{\Omega_x} \int_{\Omega_{(p,\mu)}} f g \, p^2 \, dp d\mu dx,
\end{equation*}
we define our inner product of two functions $f$ and $g$
in the $(x,p,\mu)$ space as
\begin{equation}
(f,g)_{X \times K} = \int_X \int_K f g \, p^2 \, dp d\mu dx,
\end{equation}
where $X \subset [0,L] $ and $K \subset [0,p_{max}]\times[-1,+1]$.
The Boltzmann Equation for our problem can be written in weak form as 
\begin{align}
( \partial_t f,g)_{\Omega_C} + \left( \partial_x (f \partial_p \varepsilon \mu),\, g \right)_{\Omega_C} &+ \left( \left[ \frac{\partial_p (p^2 f \mu) }{p^2} + \frac{\partial_{\mu} (f(1-\mu^2)) }{p} \right] q\partial_x \Phi(x,t) , g\right)_{\Omega_C} \nonumber\\[4pt]
&\qquad \qquad \qquad \qquad \qquad= \left( Q(f), g \right)_{\Omega_C},
\end{align}
where $\Omega_C = X \times K $. 
More specifically, we have that (assuming $\partial_t g = 0 $)
\begin{align}
&\int_{\Omega_C}  Q(f) \, g \, p^2 dp d\mu dx
=
\partial_t
\int_{\Omega_C} f\,g \, p^2 \, dp d\mu dx \, +
\int_{\Omega_C} \partial_x (f \partial_p \varepsilon \mu) \, g \, p^2 dp d\mu dx  
\nonumber\\[4pt]
&+
\int_{\Omega_C}
{\partial_p (p^2 f \mu) } q\partial_x \Phi(x,t) g \, dp d\mu dx + 
\int_{\Omega_C}
{\partial_{\mu} (f(1-\mu^2)) }  q\partial_x \Phi(x,t) g \, p \, dp d\mu dx.  \nonumber 
\end{align}
{\subsubsection{The Cut-off-domain}
%NOTE: Some of this notation is introduced before or after this paragraph. I have simply kept what's on this paragraph and modified it to match accordingly with the notation out of it. We might consider removing parts of it in this paragraph or outside if appropriate. Let me know what you think. 
Let's start setting some notational components associated to the computational domain for  the proposed DG scheme. The computational  domain, in 1D in $x$-space and 2D in $\vec{p}$-space,  is denoted by 
\begin{equation}\label{comp domain}
\Omega_C:=L_{x}\times\Omega_{\vec{p},t}=[0,L]_x\times\left[[0,p^E_{\max}]\times[-1,1]\right]_{\vec{p}},
\end{equation} 
\begin{equation*}
L_{x}=[0,L]_x, \quad \Omega_{\vec{p},t}=\left[[0,p^E_{\max}]\times[-1,1]\right]_{\vec{p}}.
\end{equation*}
%I modified Lx into L because that's the notation appearing before this paragraph.
Thus, the domain in $\vec{p}$-momentum space needs to be  cut-off and redefined by  domain  $\Omega_{\vec{p},t}$ in momentum space{, as it} depends on the electric field $E(x,t)=-\nabla \Phi(x,t)$ according to the mean-field BP flow in \eqref{BP-system}. The particular choice of diameter constant $p^E_{\max}$ is chosen by taking
% I introduced matching constants by a dimensional analysis considering that
% momentum = mass*velocity (p=mv)
% velocity= distance/time (v=L/T)
% Force = momentum/time, as F=dp/dt, so momentum=time*force, 
% Force = q*Efield, with an electronic charge q. 
% and guessing the argument is that the flow will transport 
% points on one endpoint of the spatial domain to the other endpoint due partly to the electric field, at which point it reenters the domain through the initial endpoint, and we choose the momentum domain so that this flow is considered during the time T of the computational simulation. Though I introduced the constants in a handwavy way my purpose was to simply match the physical dimensions, so if there are finer/additional constants, such as a factor of 2, etc., please introduce them as you think conveniently. Thanks
%BTW, I used the notation m^* for the reduced mass of the electron in the case of silicon
 \begin{align}\label{LE}
p^E_{\max} = m^*L/T +  TqE^*, \qquad \qquad \text {with}\ \       E^*\geq \max_{x\in\Omega_{x}} |E_n(x)|,
\end{align}
where $T$ is the time our physical evolution problem lasts, $m^*$ is the reduced mass of the electron, and  $E_n(x) \!:=\!E(x,t_n)\!= \!\int_0^x \! \int_{\Omega_{\vec{p},t}} \!f_n(x,p,\mu)p^2dpd\mu dx \!-\!C$, with $C$ a constant, where we denoted $f_n(x,p,\mu)\!:=\! f(x,p,\mu,t_n)$ to be the pdf at the given time 
 $t_n$ to be the discrete time evaluated in the RK method.  Thus, $E^*  \!\le \!\int_{\Omega_{x}\times\Omega_{\vec{p},t}} \!\! f_n(x,p,\mu) p^2 dp d\mu dx \! + \!C.$ 
In particular, this cut-off domain correction in momentum space allows the computational solution $f_{h,n}(x,p,\mu)$ of the Boltzmann flow along the Hamiltonian characteristic field, defined by $(x-v(p,\mu)t_n, p\!+\!qE_n(x)t_n,\mu)$, to keep its initial support transported by the characteristic curves inside the computational domain $\Omega_{\vec{p},t}$.
%\begin{equation}\label{OmegaC}
%\Omega_C:=\Omega^E_{(p,\mu)}. 
%\end{equation}
This assumption %depends 
\textsl{on the cut-off domain depends} on the magnitude of the electric field  as well as on the support of the initial data. Under these conditions  $f(x,p,t)|_{\partial \Omega_{\vec{p},t}}=0$. 
%restricted to the boundary of $\Omega_C$ in momentum space, i.e. $ \partial \Omega_{C,\bp}$, vanishes. 
We point out that using periodic boundary conditions in $x$-space set on $\Omega_x$ results in a uniform in time $E^*$, since the solution associated to the {Vlasov - Poisson} system in one dimension  in $x$-space 
yields global uniformly bounded 
electric fields. That means the set $\Omega_{\vec{p},t}$ does not need to be updated with the time step evolution, {regarding the transport process.}

{\subsubsection{The DG-FEM scheme formulation for the transformed Boltzmann equation in the   $(x,p,\mu)$-space computational domain.}
The discretazation of the computational domain   $\Omega_C$, defined in \eqref{comp domain}, is meshed as follows. Set
%\vspace{-0.5cm}
\begin{align}
\Omega_C&:= \bigcup^{N_x N_p N_\mu}_{i,k,m} \Omega_{ikm}, \quad 
%\text{with}
%\,
%\nonumber \\
% \ \ \ &\ \\
\Omega_{ikm} :=
X_i \times K_{k,m} = 
[x_{i^-}, x_{i^+}] \times [p_{k^-}, p_{k^+}] \times 
[\mu_{m^-}, \mu_{m^+}], 
%=: I_i \times K_{p,m}, 
\nonumber
\end{align}
\vspace{-.25cm}
with the classical notation for mesh midpoint characterization 
\begin{equation}
x_{i^\pm} = x_{i\pm 1/2}, \quad
p_{k^\pm} = p_{k\pm 1/2}, \quad 
\mu_{m^\pm} = \mu_{m\pm 1/2}\, .
\end{equation}
Denote by $\mathcal{T}^{x}_{h}={I_{x}}$ and
$\mathcal{T}^{p,m}_{h}={K_{p,m}}$ the regular partitions of $\Omega_{x}$ and $\Omega_{(p,\mu)}$, respectively, with
\vspace{-.25cm}
\begin{eqnarray*}
\mathcal{T}^{x}_{h}\!\! &=&\!\!\bigcup^{N_{x}}_{1} I_{i}=\bigcup^{N_{x}}_{1}[x_{i^-}, x_{i^+}),  \,
\text{and}
\,
%\\
\mathcal{T}^{p,m}_{h}\!\! =\!\!\bigcup^{N_{p}\times N_{\mu}}_{|k,m|=1}K_{j}\!=\!\bigcup^{N_p\times N_\mu}_{k,m=1}  [p_{k^-}, p_{k^+}) \times 
[\mu_{m^-}, \mu_{m^+})\, ,
\end{eqnarray*}
with $x_{1/2}=0$, $x_{N_{x}+1/2}:=L$;\  $p_{1/2}=0$, $p_{N_{p}+1/2}:=p^{E}_{\max}$;\  $\mu_{1/2}=-1$, $\mu_{N_{\mu}+1/2}:=1$, respectively.
Then, $\mathcal{T}_{h}=\{\mathcal{E}: \mathcal{E}=I_{x}\times K_{k}; \ \forall I_{x}\in \mathcal{T}^{x}_{h}, \ \forall K_{p,m}\in\mathcal{T}^{p,m}_{h}\}$ defines a partition of $\Omega$.
Denote by $\varepsilon_{x}$ and $\varepsilon_{k,m}$ the set of edges of $\mathcal{T}^{x}_{h}$ and $\mathcal{T}^{k,m}_{h}$, respectively. Then, the edges of $\mathcal{T}_{h}$
will be $\varepsilon = \{I_{x}\times e_{k}: \forall I_{x}\in \mathcal{T}^{x}_{h}, \forall e_{k,m}\in \varepsilon_{k,m}\}\cup \{e_{x}\times K_{k,m}: \forall e_{x} \in \varepsilon_{x}, \forall K_{k,m}\in \mathcal{T}^{k,m}_{h} \}$. In addition, $\varepsilon_{x}=\varepsilon^{i}_{x}\cup \varepsilon^{b}_{x}$ with $\varepsilon^{i}_{x}$ and $\varepsilon^{b}_{x}$ being the interior and boundary edges, respectively
(same for the domain of momentum variables).
The mesh size is $h:=\max(h_{x},\ h_{k,m}):=\max_{\mathcal{E}\in \mathcal{T}_{h}}\text{diam}(\mathcal{E})$, with $h_{x}:=\max_{I_{x}\in \mathcal{T}^{x}_{h}}\text{diam}(I_{x})$ and \ 
$h_{k,m}:=\max_{K_{k,m}\in \mathcal{T}^{k,m}_{h}}\text{diam}(K_{k,m})$.
Hence, invoking the classical corresponding notation for  their respective internal products in the described mesh becomes
\vspace{-0.2cm}
\begin{equation}\label{discrete inner product}
(f,g)_{\Omega_{ikm}} := \int_{ikm} f g \, p^2 dp d\mu \, dx 
\, .
% = (f,g)_{X_i \times K_{k,m} }
\end{equation}
\vspace{-0.2cm}
The finite element space is defined as
\begin{equation}
\label{FEspace} V_h^{\mathcal \kappa}=\{ \phi_h \in L^2(\Omega_C) : \forall K \in
\mathcal{T}(\Omega_C), \phi_h |_{K} \in P^{\mathcal \kappa}(K)\},
\end{equation}
where $P^{\mathcal \kappa}(K)$ is the set of polynomials of total degree at most
${\mathcal \kappa}$ on the simplex $K$.

The semi-discrete Discontinuous Galerkin Formulation for our transformed Boltzmann Equation in curvilinear coordinates is to
find $f_h \, \in \, V_h^{\mathcal \kappa}$ such that $\forall \, g_h \, \in V_h^{\mathcal \kappa}$ 
and $\, \forall \, \Omega_{ikm} $, 

\begin{align}
\partial_t  &\int_{ikm}  f_h \, g_h \, p^2 dp d\mu dx 
\pm
\int_{ik}\!\! (1\!-\!\mu_{m\pm}^2)
(-q \widehat{ E f_h })|_{\mu_{m\pm}}   g_h|_{\mu_{m\pm}}^{\mp}  pdp dx
\nonumber\\
&\ -\int_{ikm}  \partial_p \varepsilon (p) \, f_h \, \mu \, \partial_x g_h  p^2 dp d\mu dx 
\pm\! 
 \int_{km} \partial_p \varepsilon \, \widehat{ f_h  \mu }|_{x_{i\pm}}  \, g_h|_{x_{i\pm}}^{\mp}  p^2 dp d\mu  
%- \int_{km} \partial_p \varepsilon \widehat{ f  \mu }|_{x_{i-}} \, g \, p^2 dp d\mu 
\nonumber\\
& - \int_{ikm}
{ p^2 } (-qE)(x,t) f_h \mu  \partial_p g_h  d\mu dx
\pm  \!
\int_{im} p^2_{k^\pm} 
(-q  \widehat{ E f_h \mu } )|_{p_{k\pm}} g_h|_{p_{k\pm}}^{\mp}  d\mu dx
%- \int_{im}
%{ (p^2 f \mu) }|_{p_{k-}} q\partial_x \Phi(x,t) g \, d\mu dx
\nonumber\\
&\ - \int_{ikm} \!\!\!\!
{ (1\!-\!\mu^2) f_h }  (-qE)(x,t) \partial_{\mu} g_h p dp d\mu dx %\nonumber\\
%&\ =
=
\int_{ikm} Q(f_h) g_h \, p^2 dp d\mu dx   \, .
\nonumber
\end{align}

The Numerical Flux used in this scheme is the Upwind Rule,
namely
%\vspace{-1.5cm}
\begin{align}\label{up-wing-flux}
\widehat{ f_h  \mu }|_{x_{i\pm}} &:=
\left(\frac{\mu + |\mu|}{2} \right) f_h |^-_{x_{i\pm}} +
\left(\frac{\mu - |\mu|}{2} \right) f_h |^+_{x_{i\pm}},
\nonumber\\
- \widehat{ qE \mu f_h  } |_{p_{k\pm}} & :=
\left(\frac{- qE \mu  + |qE \mu |}{2}\right) f_h |^-_{p_{k\pm}} +
\left(\frac{- qE \mu  - |qE \mu |}{2} \right) f_h |^+_{p_{k\pm}},
\\
- \widehat{ qE f_h }|_{\mu_{m\pm}} & := 
\left(\frac{-qE + |qE|}{2} \right) f_h |^-_{\mu_{m\pm}} +
\left(\frac{-qE - |qE|}{2} \right) f_h |^+_{\mu_{m\pm}}.
\nonumber
\end{align}
%\vspace{-.7cm}
\subsubsection{Stability of DG scheme under  entropy norm for Periodic Boundary Conditions}
We can prove the stability of the scheme under the entropy norm 
related to the interior product
%\vspace{-.5cm}
$
\int_{\Omega_C}   f_h \, g_h e^H \, p^2 dp d\mu dx \, ,
$
inspired in the strategy of Cheng, Gamba, and Proft \cite{CGP}.
These estimates are possible due to the dissipative property of the linear collisional operator applied to the curvilinear representation of the momentum, with the entropy norm related to the 
function $e^{H(\vec{x},\vec{p},t)} = \exp\left(\varepsilon(\vec{p}) -q\Phi(\vec{x},t) \right)$. 
The latter term makes manifest 
a Hamiltonian structure in the entropy norm,
with this same Hamiltonian structure
generating a divergence free characteristic field for the equations of motion.

%\vspace{0.5cm}

Existence and uniqueness as well as regularity of initial-boundary value problems (IBVP) associated to the Vlasov - Boltzmann equation (\ref{vlasov},\ref{poisson}) with periodic boundary conditions have been shown by Y. Guo \cite{Guo-2002, Guo-2003} for the non-linear Vlasov - Boltzmann - Poisson - Maxwell system with initial data near a global Maxwellian distribution. It also shows the  regularity propagation of the initial behavior; and further, R. Strain \cite{Guo-Strain-2006}, calculated almost exponential decay rates to such Maxwellian equilibrium.
Additionally, in the particular case of the initial and boundary value problem, N. Ben-Abdallah and M. Tayeb \cite{BenA-Tay} showed existence and uniqueness of solutions to the linear Vlasov - Boltzmann with a continuous in space-time field $E(x,t)$ and non-negative initial and boundary conditions having the same polynomial decay in $L^1\cup L^\infty$ in one space dimension and higher dimensional phase-space (velocity). Such solution preserves the regularity and decay properties of the initial state. While this result uses low regularity of the integrating characteristic field $E(x, t)$ with nonvanishing gradients, it is hoped that higher order Sobolev regularity may propagate for more regular fields, as well as more regular initial and boundary conditions satisfying at least polynomial decay.
However, we are not aware, at this point, whether such result is available.  We also mention that in \cite{BenA-Tay} the authors showed the existence of weak solutions to Boltzmann - Poisson  for incoming data with polynomial decay in the case of one phase-space dimension.

Hence, whenever  the spatial domain is a rectangle, the   assumption of  periodic boundary data in $\vec{x}$-space is  the most suitable condition for stability of the transport flow along divergence free dynamics by means of entropy methods.  
Indeed,  for the numerical approximation to our Boltzmann probability density function $f_h \, \in \, V_h^k$ (the space of trial functions being $V_h^k$, denoting the space of piecewise continuous polynomials of degree $k$) and for all test functions $\forall \, g_h \, \in V_h^k$ 
and $\, \forall \, \Omega_{ikm} $ elements in which the domain is decomposed, it must hold that
\begin{align} \label{entropyPPDGform}
 &\int_{ikm}  \partial_t  f_h \, g_h e^H \, p^2 dp d\mu dx 
\pm  \int_{ik} \!\!(1\!-\!\mu_{m\pm}^2)
(-q \widehat{ E f_h })|_{\mu_{m\pm}}   \, g_h e^H|_{\mu_{m\pm}}^{\mp}  p dp dx  
 \nonumber\\
&\ \ -\int_{ikm}  \partial_p \varepsilon (p) \, f_h \, \mu \, \partial_x (g_h e^H) \, p^2 dp d\mu dx 
\pm \!
 \int_{km} \partial_p \varepsilon \, \widehat{ f_h  \mu }|_{x_{i\pm}}  \, g_h e^H|_{x_{i\pm}}^{\mp}  \, p^2 dp d\mu  
%- \int_{km} \partial_p \varepsilon \widehat{ f  \mu }|_{x_{i-}} \, g \, p^2 dp d\mu 
\nonumber\\
&\ \ - \int_{ikm}
{ p^2 } (-qE)(x,t) f_h \mu \, \partial_p (g_h e^H) \, d\mu dx
 \pm \! 
\int_{im} p^2_{k^\pm} \,
(-q  \widehat{ E f_h \mu } )|_{p_{k\pm}} g_h e^H|_{p_{k\pm}}^{\mp}  d\mu dx
\nonumber\\
&\ \ - \int_{ikm}\!\!\!
{ (1\!-\!\mu^2) f_h }  (-qE)(x,t) \, \partial_{\mu} (g_h e^H)  pdp d\mu dx = \int_{ikm} Q(f_h) g_h e^H \, p^2 dp d\mu dx, 
\end{align}
where we are including  as a factor the inverse of a 
Maxwellian along the characteristic flow generated 
by the Hamiltonian transport field 
$\left(\partial_p \varepsilon(p), q\partial_x \Phi(x,t) \right) $,
\begin{equation}
e^{H(x,p,t)} = \exp(\varepsilon(p) -q\Phi(x,t)) = 
\left( e^{q\Phi(x,t)} e^{-\varepsilon(p)} \right)^{-1} \, .
\end{equation}
This is clearly an exponential of the Hamiltonian energy, 
assuming the energy is measured in $K_B T $ units.
We include this modified inverse Maxwellian factor because 
we can use some entropy inequalities related to the collision operator. 
Our collision operator satisfies the dissipative property
\begin{equation}
\int_{\Omega_{C,\vec{p}}} Q(f) g d\vec{p} = 
-\frac{1}{2} \int_{\Omega_{\vec{p}}} S(\vec{p}'\rightarrow \vec{p})
e^{-\varepsilon(p')} \left(\frac{f'}{e^{-\varepsilon(p')}} \!-\!
\frac{f}{e^{-\varepsilon(p)}} \right)(g' \!-\! g) d\vec{p}d\vec{p},
\end{equation}
which can be also expressed as (multiplying and dividing by 
$e^{-q\Phi(x,t)}$) 
\begin{equation}
\int_{\Omega_{C,\vec{p}}}  Q(f) g d\vec{p} = 
-\frac{1}{2} \int_{\Omega_{\vec{p}}} S(\vec{p}\,'\rightarrow \vec{p})
e^{-H'} \left(\frac{f'}{e^{-H'}} -
\frac{f}{e^{-H}} \right) (g' - g) d\vec{p}\,' d\vec{p} .
\end{equation}

Therefore, if we choose a monotone increasing function 
$g({f}/{e^{-H}})$, namely $g = f/e^{-H} = f e^H $, we have
an equivalent dissipative property but now with
the exponential of the full Hamiltonian,
\begin{equation}
\int_{\Omega_{C,\vec{p}}}  Q(f) \frac{f}{e^{-H}} d\vec{p} = 
-\frac{1}{2} \int_{\Omega_{\vec{p}}} S(\vec{p}\,'\rightarrow \vec{p})
e^{-H'} \left(\frac{f'}{e^{-H'}} -
\frac{f}{e^{-H}} \right)^2 d\vec{p}\,' d\vec{p} \leq 0 .
\end{equation}
So we have found the dissipative entropy inequality
\begin{equation}
\int_{\Omega_{C,\vec{p}}}  Q(f) fe^H p^2 dp d\mu d\varphi = 
\int_{\Omega_{C,\vec{p}}}  Q(f) \frac{f}{e^{-H}} d\vec{p} 
\leq 0  \, .
\end{equation}

As a consequence of this entropy inequality we obtain the following stability theorem of the scheme under an entropy norm.
\begin{theorem}\label{thm4.2}
\emph{(Stability under the entropy norm $ \int f_h \, g_h e^H \, p^2 dp d\mu dx$ for a given periodic potential $\Phi(x,t)$):}
Consider the semi-discrete solution $f_h$ 
to the DG formulation in (\ref{entropyPPDGform})
for the BP system in momentum curvilinear coordinates. 
We have then that
\begin{equation}
0 \geq \int_{\Omega_C}f_h \partial_t  f_h \,  e^{H(x,p,t)} \, p^2 \, dp d\mu dx 
= \frac{1}{2}
\int_{\Omega_C}  \partial_t  f_h^2 e^{H(x,p,t)} \, p^2 \, dp d\mu dx  \, .
\end{equation}
\end{theorem}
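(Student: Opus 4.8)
The plan is to run the standard discontinuous-Galerkin energy argument directly in the $e^H$-weighted inner product. I would take the test function to be $g_h=f_h$ in the weighted weak formulation (\ref{entropyPPDGform}) and sum over all cells $\Omega_{ikm}$. The time term then collapses immediately to the target quantity, since $f_h\,\partial_t f_h=\tfrac12\partial_t f_h^2$ gives the stated equality, and the collision term on the right is controlled by the dissipative entropy inequality established just above: summing $\int_{ikm}Q(f_h)\,f_h\,e^H p^2$ over all momentum cells reconstructs the global momentum integral, so this term is $\le 0$. It therefore remains to show that the assembled advection contribution (the volume integrals together with the upwind numerical fluxes) is nonnegative. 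Granting this, writing the summed identity schematically as $T+\mathcal{A}=\mathcal{C}$ with $\mathcal{C}\le 0$ and $\mathcal{A}\ge 0$ yields $T=\mathcal{C}-\mathcal{A}\le 0$, which is exactly the asserted inequality $0\ge\int_\Omega f_h\,\partial_t f_h\,e^H p^2\,dp\,d\mu\,dx$.

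The heart of the estimate is the treatment of the three advection directions. I would expand $\partial_x(f_h e^H)$, $\partial_p(f_h e^H)$ and $\partial_\mu(f_h e^H)$ by the product rule, using $\partial_x H=qE$, $\partial_p H=\partial_p\varepsilon$ and $\partial_\mu H=0$, then replace each $f_h\,\partial_\bullet f_h$ by $\tfrac12\partial_\bullet f_h^2$ and integrate by parts within each cell. Differentiating the weight $e^H$ together with the geometric factors $p^2$ and $(1-\mu^2)$ produces a collection of pure volume residual terms, and the crucial point is that these cancel identically: the $\partial_p\varepsilon$ contributions coming from the $x$- and $p$-directions annihilate one another, and the Jacobian term from $\partial_p p^2=2p$ in the $p$-direction cancels the term from $\partial_\mu(1-\mu^2)=-2\mu$ in the $\mu$-direction. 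This exact cancellation is precisely the statement that the Hamiltonian transport field is divergence free with respect to the measure $e^H p^2\,dp\,d\mu\,dx$, which is the structural reason the weight $e^{H}=e^{\varepsilon(p)-q\Phi}$ was introduced.

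Once the residual volume terms vanish, only the cell-interface contributions survive, namely the boundary terms produced by the integration by parts and the upwind flux terms already present in (\ref{entropyPPDGform}), and I would combine these face by face. At each interior face, summing the two traces from the adjacent cells and invoking the upwind definitions of $\widehat{f_h\mu}$, $\widehat{Ef_h\mu}$ and $\widehat{Ef_h}$ turns the contribution into a nonnegative jump penalty of the form $\tfrac12\,|\text{local speed}|\,[f_h]^2$, the weight $e^H$ being single-valued across faces since $\Phi$ and $\varepsilon$ are continuous. At the domain boundaries the contributions drop out: periodicity in $x$ cancels the two end faces, the factor $p^2$ kills the $p=0$ face while the energy cutoff $\chi$ removes any flux at $p=p_{\max}$, and the factor $(1-\mu^2)$ kills the $\mu=\pm1$ faces. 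Hence $\mathcal{A}\ge 0$.

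I expect the main obstacle to be the careful bookkeeping of the preceding two steps rather than any conceptual difficulty: verifying that every volume residual truly cancels (the $p$-direction is the delicate one because of the $p^2$ weight and the extra $2p$ term), and checking that each integration-by-parts boundary term recombines with its upwind flux into a jump of the correct, nonnegative sign, while the physical and geometric boundary factors genuinely annihilate all exterior-face terms. Because the theorem is stated for a \emph{given} periodic potential $\Phi(x,t)$, no feedback from the Poisson equation enters and $H$, $e^H$ are fixed known functions, so the nonlinearity of the coupled system does not interfere with the estimate.
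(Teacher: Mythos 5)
Your proposal is correct and follows essentially the same route as the paper: test with $g_h=f_h$, sum over cells, bound the collision term by the entropy dissipation inequality, and reduce the assembled transport contribution to a nonnegative upwind jump penalty of the form $\tfrac14\int (f_h^+-f_h^-)^2|\beta\cdot\hat n|e^H$; the volume cancellations you carry out by hand (the $\partial_p\varepsilon$ terms between the $x$- and $p$-directions, and the $2p$ against the $-2\mu$ geometric factors) are exactly what the paper packages as the two structural identities $\beta\cdot\partial H=0$ and $\partial\cdot\beta=0$ for the transport field $\beta=(p^2\mu\,\partial_p\varepsilon,\,-qEp^2\mu,\,-qEp(1-\mu^2))$ before performing a second integration by parts. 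The one small inaccuracy is attributing the vanishing of the $p=p_{\max}$ face to the cutoff $\chi$, which lives in the collision kernel rather than in the transport flux -- though the paper itself also leaves the exterior momentum faces implicit.
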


\begin{proof}
Choosing $g_h = f_h$ in (\ref{entropyPPDGform}),
and considering the union of all the cells $\Omega_{ikm}$,
which gives us the 
 whole domain  $\Omega_C = \Omega_x \times \Omega_{p,\mu}$ for integration, we have
\begin{align} 
0 \geq 
\int_{\Omega_C} Q(f_h) f_h e^H \, p^2 dp d\mu dx
&=
  \int_{\Omega_C} \partial_t  f_h \, f_h e^H \, p^2 dp d\mu dx 
\nonumber\\
-\int_{\Omega_C}  \partial_p \varepsilon (p) \, f_h \, \mu \, \partial_x (f_h e^H) \, p^2 dp d\mu dx \,
&+ 
 \int_{\partial_x\Omega_C} \partial_p \varepsilon \, \widehat{ f_h  \mu }  \, f_h e^H  \, p^2 dp d\mu  
%- \int_{km} \partial_p \varepsilon \widehat{ f  \mu }|_{x_{i-}} \, g \, p^2 dp d\mu 
\nonumber\\
- \int_{\Omega_C}
{ p^2 } (-qE) f_h \mu \, \partial_p (f_h e^H) \, d\mu dx
&+  
\int_{\partial_p \Omega_C} p^2  \,
(-q  \widehat{ E f_h \mu } )  f_h e^H  \, d\mu dx
%- \int_{im}
%{ (p^2 f \mu) }|_{p_{k-}} q\partial_x \Phi(x,t) g \, d\mu dx
\nonumber\\
- \int_{\Omega_C}
{ (1-\mu^2) f_h }  (-qE) \, \partial_{\mu} (f_h e^H) \, p \, dp d\mu dx 
&+ 
\int_{\partial_{\mu} \Omega_C} (1-\mu^2)
(-q \widehat{ E f_h }) \, f_h e^H \, p \, dp dx \, .
%- \int_{ik}
%(1-\mu_{m-}^2)
%q \hat{ (f\partial_x \Phi) }|_{\mu_m^-}   \, g \, p \, dp dx
\nonumber
\end{align}
We can express this in the more compact form
\begin{equation} 
0 \geq \int_{\Omega_C} \partial_t  f_h \, f_h e^H \, p^2 \, dp d\mu dx 
-\int_{\Omega_C} f_h \beta \cdot \partial (f_h e^H) \,dp d\mu dx\,
+
\int_{\partial \Omega_C}
 \widehat{ f_h }  \beta \cdot \hat{n} \, f_h e^H \, d\sigma,
\end{equation}
defining the transport vector %$\beta$
%with the properties
 \begin{equation}
\beta = \left( p^2 \mu \partial_p \varepsilon(p), -qE \, p^2 \mu, 
-qE p (1-\mu^2) \right) \,  .%\\
%\partial \cdot \beta &=&   -2pqE\mu + 2pqE\mu = 0, \nonumber\\
% \beta \cdot \partial H &=&  \left( p^2 \mu \partial_p \varepsilon(p), -qE \, p^2 \mu, 
%-qE p (1-\mu^2) \right) \cdot (qE, \partial_p \varepsilon, 0 ) = 0, \quad \partial_{\mu} H = 0 \, . \nonumber
\end{equation}
We integrate by parts again the transport integrals, obtaining that
\begin{align}
\int_{\Omega_C} f_h \beta \cdot \partial (f_h e^H) \,dp d\mu dx\, 
&=
- \int_{\Omega_C} \partial \cdot ( f_h \beta) f_h e^H \,dp d\mu dx\, 
+ \int_{\partial \Omega_C} f_h \beta \cdot \hat{n} f_h e^H \,d\sigma\,
\nonumber\\
&=
- \int_{\Omega_C} (\beta \cdot \partial  f_h) f_h e^H \,dp d\mu dx\, 
+ \int_{\partial \Omega_C} f_h \beta \cdot \hat{n} f_h e^H \,d\sigma\, ,
\nonumber
\end{align}
but since
\begin{equation}
\beta \cdot \partial (f_h e^H) = \beta \cdot e^H \partial f_h 
+ \beta \cdot f_h e^H \partial H = e^H \beta \cdot \partial f_h,
\end{equation}
 we have then
\begin{equation}
\int_{\Omega_C} f_h \beta \cdot \partial (f_h e^H) \,dp d\mu dx\, 
=
\int_{\Omega_C}  f_h e^H \beta \cdot \partial  f_h \,dp d\mu dx\, 
=
\frac{\int_{\partial \Omega_C} f_h \beta \cdot \hat{n} f_h e^H \,d\sigma}{2} 
.
\end{equation}
We can express our entropy inequality then as
\begin{equation} 
0 \geq \int_{\Omega_C} \partial_t  f_h \, f_h e^H \, p^2 \, dp d\mu dx 
-\frac{1}{2} 
\int_{\partial \Omega_C} f_h \beta \cdot \hat{n} f_h e^H \,d\sigma\,
+
\int_{\partial \Omega}
 \widehat{ f_h }  \beta \cdot \hat{n} \, f_h e^H \, d\sigma \, ,
\end{equation}
remembering that we are integrating over the whole domain 
(the union of all cells defining our mesh).
We distinguish between boundaries of cells for which $\beta \cdot \hat{n} \geq 0$ and the ones for which $\beta \cdot \hat{n} \leq 0$, defining uniquely the boundaries. 
Remembering that the upwind rule \eqref{up-wing-flux} is such that $\hat{f}_h = f_h^-$, we have that the  solution value inside the cells close to boundaries for which $\beta\cdot\hat{n} \geq 0$ is $f_h^-$, and
for boundaries $\beta \cdot \hat{n} \leq 0 $ the solution value inside the cell close to that boundary is $f_h^+$. We have% then
\begin{align} 
0 \geq 
&\int_{\Omega_C} \partial_t  f_h \, f_h e^H  p^2  dp d\mu dx 
-\frac{1}{2} 
\int_{\partial \Omega_C} f_h \beta \cdot \hat{n} f_h e^H d\sigma
+
\int_{\partial \Omega}
 f_h^-   \beta \cdot \hat{n}  f_h e^H d\sigma
\nonumber
\end{align}
and
\begin{align}
0 \geq
&\int_{\Omega_C} \partial_t  f_h  f_h e^H  p^2 dp d\mu dx 
-\frac{1}{2} 
\int_{\beta\cdot\hat{n}\geq 0}\!\! f_h^- |\beta \cdot \hat{n}| f_h^- e^H d\sigma\,
+
\int_{\beta\cdot\hat{n}\geq 0}
 f_h^-  |\beta \cdot \hat{n}| \, f_h^- e^H \, d\sigma
 \nonumber\\
&\ \ + 
\frac{1}{2} 
\int_{\beta\cdot\hat{n}\leq 0} \!\!f_h^+ |\beta \cdot \hat{n}| f_h^+ e^H d\sigma\,
-
\int_{\beta\cdot\hat{n}\leq 0}
 f_h^-  |\beta \cdot \hat{n}| \, f_h^+ e^H \, d\sigma. 
 \nonumber
\end{align}
{The notation  $e_h$  is generic in finite elements, counting boundaries twice given that it indexes elements, so it must be balanced by a factor of 1/2}. 
We have
\begin{align} 
0 \geq 
&\int_{\Omega_C} \partial_t  f_h  f_h e^H \, p^2 dp d\mu dx 
\!+\!\frac{1}{2} \left(
-\frac{1}{2}
\int_{e_h} f_h^- |\beta \cdot \hat{n}| f_h^- e^H d\sigma
\!+\!
\int_{e_h}\!\!
 f_h^-  |\beta \cdot \hat{n}|  f_h^- e^H  d\sigma
\right. \nonumber\\
 &\ \ \ \qquad \qquad \qquad \qquad \qquad  + \left.
\frac{1}{2} 
\int_{e_h} f_h^+ |\beta \cdot \hat{n}| f_h^+ e^H d\sigma\,
-
\int_{e_h}
 f_h^-  |\beta \cdot \hat{n}| \, f_h^+ e^H d\sigma
 \right);
 \nonumber\\
% 0 \geq 
%&\int_{\Omega_C} \partial_t  f_h \, f_h e^H \, p^2 \, dp d\mu dx 
%+\frac{1}{2} \left( \frac{1}{2}
%\int_{e_h} f_h^- |\beta \cdot \hat{n}| f_h^- e^H \,d\sigma
%\right. 
% \nonumber\\
%&\qquad \qquad \qquad \qquad \qquad  + \left.
%\frac{1}{2} 
%\int_{e_h} f_h^+ |\beta \cdot \hat{n}| f_h^+ e^H \,d\sigma
%-
%\int_{e_h}
% f_h^-  |\beta \cdot \hat{n}| \, f_h^+ e^H \, d\sigma
%\right);
% \nonumber\\
% 0 \geq 
%&\int_{\Omega_C} \partial_t  f_h \, f_h e^H  p^2 dp d\mu dx 
%   + 
%\frac{1}{4} \left(
%\int_{e_h} f_h^- f_h^- |\beta \cdot \hat{n}|  e^H d\sigma \right. 
%\nonumber\\
%&\ \ \ \ \qquad \qquad \qquad \qquad \qquad \left. -2
%\int_{e_h}\!\!
% f_h^- f_h^+  |\beta \cdot \hat{n}|  e^H  d\sigma
%+
%\int_{e_h} \!\! f_h^+ f_h^+ |\beta \cdot \hat{n}|  e^H d\sigma\,
%\right);
% \nonumber\\
 0 \geq 
&\int_{\Omega_C} \partial_t  f_h \, f_h e^H  p^2  dp d\mu dx 
+
\frac{1}{4} 
\int_{e_h} (f_h^+ - f_h^-)^2 |\beta \cdot \hat{n}|  e^H d\sigma. \nonumber
\end{align}

Since the second term is non-negative, we conclude that
\begin{equation}\label{control_2}
0 \geq
- \frac{1}{4} 
\int_{e_h} (f_h^+ - f_h^-)^2 |\beta \cdot \hat{n}|  e^H d\sigma\,
\geq
%\int_{\Omega_C}f_h \partial_t  f_h \,  e^{H(x,p,t)} \, p^2 \, dp d\mu dx =
 \frac{1}{2}
\int_{\Omega_C}  \partial_t  f_h^2 e^{H(x,p,t)}  p^2  dp d\mu dx  \, ,
\end{equation}
and it is in this sense that the numerical solution has stability
with respect to the considered entropy norm. 
\end{proof}

In addition, the following  result holds.
\begin{corollary}\label{stab-Phi(x)}
\emph{(Stability under entropy norm for time independent Hamiltonian):}
If $\Phi=\Phi(x)$, so $\partial_t H = 0$,
the stability under our entropy norm gives us that for $t\geq 0$
 \begin{equation}
\left| \left| f_h \right| \right|_{L^2_{e^H p^2}}^2 (t)
=
\int_{\Omega_C}  f_h^2(x,p,\mu,t) e^{H(x,p)}  p^2 dp d\mu dx 
\leq 
\left| \left| f_h \right| \right|_{L^2_{e^H p^2}}^2 (0) \quad .
\end{equation}
\end{corollary}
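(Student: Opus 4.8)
The plan is to obtain this corollary as an immediate consequence of Theorem \ref{thm4.2}, the only additional ingredient being the time-independence of the weight. The key observation is that when $\Phi = \Phi(x)$ is stationary, the Hamiltonian $H = \varepsilon(p) - q\Phi(x)$ ceases to depend on $t$, so $\partial_t H = 0$ and the factor $e^{H(x,p)}$ becomes a fixed weight. This is exactly the feature that allows the weight to commute past the time derivative, converting the pointwise-in-time inequality of the theorem into a statement about the monotonicity of the norm.

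First I would invoke Theorem \ref{thm4.2}, which asserts
\begin{equation}
0 \geq \frac{1}{2} \int_{\Omega} \partial_t f_h^2 \, e^{H(x,p,t)} \, p^2 \, dp \, d\mu \, dx .
\end{equation}
Since now $e^H = e^{H(x,p)}$ is independent of $t$, I can pull the time derivative out of the spatial integral and recognize the right-hand side as the total time derivative of the weighted norm,
\begin{equation}
\frac{1}{2} \int_{\Omega} \partial_t f_h^2 \, e^{H(x,p)} \, p^2 \, dp \, d\mu \, dx = \frac{1}{2} \frac{d}{dt} \int_{\Omega} f_h^2 \, e^{H(x,p)} \, p^2 \, dp \, d\mu \, dx = \frac{1}{2} \frac{d}{dt} \left\| f_h \right\|_{L^2_{e^H p^2}}^2 (t) .
\end{equation}
Combining the two displays gives $\frac{d}{dt} \left\| f_h \right\|_{L^2_{e^H p^2}}^2 (t) \leq 0$, i.e.\ the entropy norm is non-increasing in time.

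Finally I would integrate this differential inequality from $0$ to $t$, obtaining
\begin{equation}
\left\| f_h \right\|_{L^2_{e^H p^2}}^2 (t) - \left\| f_h \right\|_{L^2_{e^H p^2}}^2 (0) = \int_0^t \frac{d}{ds} \left\| f_h \right\|_{L^2_{e^H p^2}}^2 (s) \, ds \leq 0 ,
\end{equation}
which is precisely the claimed bound. There is no genuine obstacle here: the single point deserving attention is the stationarity of $H$, since this is exactly what upgrades the instantaneous inequality of Theorem \ref{thm4.2} into a monotonicity statement that integrates to decay of the norm. In the fully time-dependent case $\partial_t H \neq 0$ this step fails, because an extra contribution proportional to $\int_{\Omega} f_h^2 \, \partial_t H \, e^{H} \, p^2 \, dp \, d\mu \, dx$ appears and one can no longer identify the integral with a clean time derivative of $\left\| f_h \right\|_{L^2_{e^H p^2}}^2$; this is why the corollary is restricted to $\Phi = \Phi(x)$.
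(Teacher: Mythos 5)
Your proposal is correct and follows essentially the same route as the paper: both use $\partial_t H = 0$ to identify the integrand of Theorem~\ref{thm4.2}'s inequality with $\partial_t\left(f_h^2 e^H\right)$, pull the time derivative outside the integral, and conclude that the entropy norm is non-increasing, hence bounded by its initial value. Your added remark about why the argument fails when $\partial_t H \neq 0$ correctly anticipates the extra term the paper handles separately in Theorem~\ref{stab-VP}.
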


\begin{proof}
The corollary follows from the fact that, since 
$\partial_t H = -q \partial_t \Phi = 0$, we have
 \begin{equation}
0 \geq 
\int_{\Omega_C}  \partial_t \left( f_h^2 e^{H(x,p)} \right)\, p^2  dp d\mu dx 
= \frac{d}{dt}
\int_{\Omega_C}  f_h^2(x,p,\mu,t) e^{H(x,p)}   p^2 dp d\mu dx  \, .
\end{equation}
Since the entropy norm decreases over time,
our result follows immediately. 
\end{proof}

%%%%%%%%%%%%%%%MERGING VERSIONS%%%%%%%%%%%%%%
%%%%%% START FROM HERE THE EDITIONS NEXT TIME
\vspace{-0.25cm}
\subsection{2DX-3DK Problem: DG scheme and Stability under an entropy norm for Periodic and Reflective Boundary Conditions}
We consider now a 2D problem in position space (which requires a 3D dimensionality in momentum), using as momentum coordinates
the same as in \cite{CGMS-CMAME2008}: 
a normalized polar component $\mu$, the azimuthal angle $\varphi$, 
and a normalized Kane energy band $\omega$, 
\begin{equation}
 \omega = \frac{\varepsilon}{K_B T_L},
\end{equation}
with $K_B$ the Boltzmann constant and $T_L$ the temperature of the material lattice, the (unnormalized) Kane energy band being
\begin{equation}
 \varepsilon (1+\alpha \varepsilon)=\frac{\hbar^2 k^2}{2m^*}, 
\end{equation}
with $\alpha$ the so-called Kane constant, $\hbar$ Planck's constant, and $m^*$ the reduced mass for the semiconductor material. 
This problem has the following semi-discrete DG formulation using an entropy norm: to find a function $f_h$ such that for all $g_h$ it holds that
\begin{align}
&\int_{\Omega_C} Q(f_h) g_h e^H   s(\omega) d\vec{\omega} d\vec{x}
=
%\\[3pt]
%&\ \ 
\int_{\partial \Omega_C}
 \widehat{ f_h }  \beta \cdot \hat{n}  g_h e^H  d\sigma
  \nonumber
\end{align}
\begin{align} \label{eq:underlyingKane}
&+ 
\int_{\Omega_C} 
(
\partial_t  f_h  g_h e^H  s(\omega)  
%d\vec{\omega} d\vec{x} 
-
%\int_{\Omega_C} 
f_h \beta \cdot \partial (g_h e^H) 
)
d\vec{\omega} d\vec{x},
\end{align}
\noindent defining $\partial = \partial_{(x,y,\omega,\mu,\varphi)} $ and the transport vector $\beta$ s.t. $\partial \cdot \beta = 0$ and $\beta \cdot \partial H = 0$,
\begin{eqnarray}
\beta &=& (\beta_1, \beta_2,\beta_3, \beta_4,\beta_5)(x,y,\omega,\mu,\varphi), \\
(\beta_1, \beta_2) &=& 
c_x { {w(1+\ak w)}}{} (\mu
,  \sqrt{1-\mu^2} \cos\ph),
  \nonumber \\
\beta_3
& = &
- c_k 
{2 {w(1+\ak w)}}{}
\left[ \mu \, E_x + \sqrt{1-\mu^2} \cos\ph \, E_y
\right]  \, ,   \nonumber\\
\beta_4
& = &
- c_k 
 {\sqrt{1-\mu^2}}{ {(1+2\ak
w)}}
 \left[ \sqrt{1-\mu^2} \, E_x  - \mu \cos\ph \, E_y \right]
 \, ,   \nonumber\\
\beta_5 &=& - c_k 
 \frac{ - (1+2\ak w) E_y \sin\ph}{ \sqrt{1-\mu^2}}
\,  ,
  \nonumber\\
\frac{-
\partial \cdot \beta /c_k
}{1+2\ak w 
}
&=& 
2\left[ \mu \, E_x + \sqrt{1-\mu^2} \cos\ph \, E_y
\right]  ,
\nonumber\\
&-& 
 {2\mu} \, E_x  -\, \left(\sqrt{1-\mu^2} -\frac{\mu^2}{\sqrt{1-\mu^2}} \right)  \cos\ph \, E_y 
 -
 \frac{ E_y \cos\ph}{ \sqrt{1-\mu^2}} \, ,
\nonumber
\end{eqnarray}
which is zero since
\begin{eqnarray}
2\left[ \mu \, E_x + \sqrt{1-\mu^2} \cos\ph \, E_y
\right]  
 {-2\mu} \, E_x  
&&
\nonumber\\
- \left(\sqrt{1-\mu^2} -\frac{\mu^2}{\sqrt{1-\mu^2}} \right)  \cos\ph \, E_y 
 -
 \frac{ E_y \cos\ph}{ \sqrt{1-\mu^2}} 
&=&
\nonumber\\
2  \sqrt{1-\mu^2} \cos\ph \, E_y
 -
 \frac{ E_y \cos\ph}{ \sqrt{1-\mu^2}} 
&=&
\nonumber\\
  - \left(\sqrt{1-\mu^2} -\frac{\mu^2}{\sqrt{1-\mu^2}} \right)  \cos\ph \, E_y 
&=&
\nonumber\\
  \sqrt{1-\mu^2} \cos\ph \, E_y
  +   \frac{\mu^2}{\sqrt{1-\mu^2}}   \cos\ph \, E_y 
 -
 \frac{ E_y \cos\ph}{ \sqrt{1-\mu^2}} 
&=&
\nonumber\\
\frac{\cos\ph \, E_y
}{  \sqrt{1-\mu^2} }
\left(
  {1-\mu^2} 
  +   {\mu^2}   
 -
1
\right)
 &=& 0.
 \nonumber
\end{eqnarray}
As we know, $\partial \cdot \beta = 0$
and $  \beta \cdot \partial H =0$.
%Moreover, for $H = \omega - 2{c_k}
%V(\vec{x},t) /{c_x}$, we have that 
\begin{comment}
\begin{equation*}
\partial H = 
(2c_k E_x/c_x,
2c_k E_y/c_x,
1, 0,0) ,
\end{equation*}

so 

\begin{align}
&  \beta \cdot \partial H = 
\nonumber
\\[3pt]
&   2c_k { {w(1+\ak w)}}{} (\mu E_x 
+ E_y  \sqrt{1-\mu^2} \cos\ph)
  \nonumber
  \\[3pt]
&   
- c_k 
{2 {w(1+\ak w)}}{}
\left[ \mu \, E_x + \sqrt{1-\mu^2} \cos\ph \, E_y
\right]
  \nonumber
  \\[3pt]
  & = 0.   \nonumber
\end{align}
\end{comment}
We state now our result regarding the stability under an entropy norm for this problem under periodic and specular reflection boundary conditions.
\begin{theorem}\label{thm4.2}
\emph{(Stability under the entropy norm $ \int f_h \, g_h e^H \, s(\omega) d\vec{\omega} d\vec{x}$ for a given  potential $\Phi(x,t)$):}
Consider the semi-discrete solution $f_h$ 
to the DG formulation 
%in (\ref{entropyPPDGform})
for the BP system in radial energy-angular coordinates (assuming a Kane band model) under periodic and specular reflection boundary conditions. 
We have then 
\begin{equation}
0 \geq \int_{\Omega_C}f_h \partial_t  f_h \,  e^{H} \, s(\omega) \, d\vec{\omega} d\vec{x} 
= \frac{1}{2}
\int_{\Omega_C}  \partial_t  f_h^2 e^{H} \, s(\omega) \, d\vec{\omega} d\vec{x}  \, .
\end{equation}
\end{theorem}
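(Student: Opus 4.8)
The plan is to reproduce, step for step, the reduction carried out in the proof of the one-dimensional stability theorem above, because that reduction never uses the dimension of the phase space: it relies only on the two structural identities $\partial\cdot\beta=0$ and $\beta\cdot\partial H=0$, both of which have just been verified for the energy-angular field $\beta=(\beta_1,\dots,\beta_5)$ and the Kane Hamiltonian $H=\omega-2(c_k/c_x)V$. First I would set $g_h=f_h$ in the semi-discrete DG formulation and invoke the collision entropy inequality $\int_\Omega Q(f_h)\,f_he^H\,s(\omega)\,d\vec\omega\,d\vec x\le 0$, which holds in the energy-angular representation by the same detailed-balance and convexity argument used for the spherical coordinates. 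This converts the weak form into
\[
0\ \ge\ \int_\Omega \partial_t f_h\,f_h e^H s(\omega)\,d\vec\omega\,d\vec x\ -\ \int_\Omega f_h\,\beta\cdot\partial(f_h e^H)\,d\vec\omega\,d\vec x\ +\ \int_{\partial\Omega}\widehat{f_h}\,\beta\cdot\hat n\,f_h e^H\,d\sigma .
\]

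Next I would integrate the transport volume integral by parts. Since $\beta\cdot\partial H=0$ gives $\beta\cdot\partial(f_h e^H)=e^H\,\beta\cdot\partial f_h$, and $\partial\cdot\beta=0$ removes the divergence of $\beta$, the volume term collapses to the single boundary contribution $\tfrac12\int_{\partial\Omega}f_h^2\,\beta\cdot\hat n\,e^H\,d\sigma$, exactly as in the one-dimensional case. Pairing this with the upwind numerical-flux term ($\widehat{f_h}=f_h^-$) and distinguishing edges by the sign of $\beta\cdot\hat n$, the interior edges reassemble into the manifestly non-negative jump term $\tfrac14\int_{e_h}(f_h^+-f_h^-)^2\,|\beta\cdot\hat n|\,e^H\,d\sigma$, and it only remains to control the contributions of the physical boundary $\partial\Omega$.

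On the momentum faces there is nothing to do: the normal component of $\beta$ vanishes because $\beta_3\propto w(1+\ak w)$ is zero at $\omega=0$ (and a zero-flux condition is imposed at $\omega=\omega_{\max}$), while $\beta_4\propto\sqrt{1-\mu^2}$ is zero at $\mu=\pm1$; the azimuthal faces $\varphi=0,2\pi$ cancel by periodicity. For the periodic $\vec x$-faces the identified edges carry opposite outward normals with matching $f_h$, $e^H$ and $\beta$, so they cancel pairwise (equivalently, they fold into the interior jump sum).

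The genuinely new ingredient, and the step I expect to be the main obstacle, is the specular-reflection face. Here I would use the involution that reverses the spatial velocity: on an $x$-face it is $\mu\mapsto-\mu$ (because $\beta_1\propto\mu$), and on a $y$-face it is $\varphi\mapsto\pi-\varphi$ (because $\beta_2\propto\cos\varphi$). Under either map the energy $\omega$, hence $e^H$ (the potential is read off at the fixed boundary point), and the weight $s(\omega)\,d\vec\omega$ are all invariant, whereas $\beta\cdot\hat n$ is odd; the specular boundary datum forces the upwind value $\widehat{f_h}$ to be the reflected trace, so $f_h^2 e^H$ is even. The boundary integral over a specular face therefore equals its own negative and vanishes. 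The delicate points to verify carefully are that the reflection is measure-preserving for $s(\omega)\,d\vec\omega\,d\sigma$, that energy conservation under reflection really leaves $e^H$ untouched, and the degenerate behaviour near $\mu=\pm1$, where $\varphi$ collapses and $\beta_5$ is singular, so that the vanishing of the normal flux and of the face integral must be checked in a limiting sense. Once the periodic and specular faces are shown to contribute zero, the inequality reads
\[
0\ \ge\ \int_\Omega \partial_t f_h\,f_h e^H s(\omega)\,d\vec\omega\,d\vec x\ +\ \tfrac14\int_{e_h}(f_h^+-f_h^-)^2\,|\beta\cdot\hat n|\,e^H\,d\sigma ,
\]
and discarding the non-negative jump term, together with the pointwise identity $f_h\,\partial_t f_h=\tfrac12\partial_t f_h^2$, yields the stated estimate.
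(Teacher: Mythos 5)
Your proposal is correct and follows essentially the same route as the paper: set $g_h=f_h$, invoke the collisional entropy inequality, integrate the transport term by parts using $\partial\cdot\beta=0$ and $\beta\cdot\partial H=0$ to collapse it to half the boundary flux, assemble interior and periodic edges into the non-negative jump term $\tfrac14\int_{e_h}(f_h^+-f_h^-)^2|\beta\cdot\hat n|e^H\,d\sigma$, and kill the specular faces by the reflection involution (the paper phrases this via the inflow/outflow change of variables together with $H=H'$ from $\varepsilon(|\vec p|)=\varepsilon(|\vec p\,'|)$ and the upwind trace identification, which is the same parity argument you describe). Your explicit check of the degenerate momentum faces at $\omega=0$, $\mu=\pm1$ and the azimuthal periodicity is a detail the paper leaves implicit, but it does not change the argument.
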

\begin{proof}
Choosing $g_h = f_h$ in the entropy inequality for the Boltzmann equation,
and considering the union of all the cells (now $\Omega_{ijkmn}$),
which gives us the 
 whole domain  $\Omega_C = \Omega_{x,y} \times \Omega_{\omega,\mu,\varphi}$ for integration, we have
\begin{align}
& 0 \geq
 \int_{\Omega_C} Q(f_h) f_h e^H s(\omega) d\vec{\omega} d\vec{x}
=  
\end{align}
\begin{align}
%\nonumber
%\\[3pt]
%&
& =\int_{\Omega_C} 
(
\partial_t  f_h f_h e^H  s(\omega) 
%\, d\vec{\omega} d\vec{x} 
-
%\int_{\Omega_C} 
f_h \beta \cdot \partial (f_h e^H) 
)
d\vec{\omega} d\vec{x}
+
\int_{\partial \Omega_C}
 \widehat{ f_h }  \beta \cdot \hat{n} f_h e^H d\sigma \, .
  \nonumber
\end{align}
We integrate by parts again the transport integrals, obtaining
\begin{eqnarray}
\int_{\Omega_C} f_h \beta \cdot \partial (f_h e^H) \,d\vec{\omega} d\vec{x}\, 
&=&
- \int_{\Omega_C} \partial \cdot ( f_h \beta) f_h e^H \,d\vec{\omega} d\vec{x}\, 
+ \int_{\partial \Omega_C} f_h \beta \cdot \hat{n} f_h e^H \,d\sigma\,
\nonumber\\
&=&
- \int_{\Omega_C} (\beta \cdot \partial  f_h) f_h e^H \,d\vec{\omega} d\vec{x}\, 
+ \int_{\partial \Omega_C} f_h \beta \cdot \hat{n} f_h e^H \,d\sigma\, ,
\nonumber
\end{eqnarray}
but since
\begin{equation*}
\beta \cdot \partial (f_h e^H) = \beta \cdot e^H \partial f_h 
+ \beta \cdot f_h e^H \partial H = e^H \beta \cdot \partial f_h,
\end{equation*}
 we have then
\begin{equation*}
\int_{\Omega_C} f_h \beta \cdot \partial (f_h e^H) \,d\vec{\omega} d\vec{x}\, 
=
\int_{\Omega_C} (\beta \cdot \partial  f_h) f_h e^H \,d\vec{\omega} d\vec{x}\, 
=
\frac{1}{2} 
\int_{\partial \Omega_C} f_h \beta \cdot \hat{n} f_h e^H \,d\sigma\, \, .
\end{equation*}
We can express our entropy inequality then as
\begin{equation} 
0 \geq \int_{\Omega_C} \partial_t  f_h \, f_h e^H \, s(\omega) \, d\vec{\omega} d\vec{x} 
-\frac{1}{2} 
\int_{\partial \Omega_C} f_h \beta \cdot \hat{n} f_h e^H \,d\sigma\,
+
\int_{\partial \Omega}
 \widehat{ f_h }  \beta \cdot \hat{n} \, f_h e^H \, d\sigma \, ,
\end{equation}
remembering that we are integrating over the whole domain by considering the union of all the cells defining our mesh.
We will distinguish between the internal edges and the external ones where periodic and specular reflection boundary conditions are applied, that is,  $\partial \Omega = IE \cup PB \cup RB $, so
\begin{eqnarray} 
0 &\geq& \int_{\Omega_C} \partial_t  f_h \, f_h e^H \, s(\omega) \, d\vec{\omega} d\vec{x} 
-\frac{1}{2} 
\int_{IE\cup PB} f_h \beta \cdot \hat{n} f_h e^H \,d\sigma\,
 \nonumber\\ 
+
\int_{IE\cup PB}
 \widehat{ f_h }  \beta \cdot \hat{n} \, f_h e^H \, d\sigma
 &-& \frac{1}{2} 
\int_{RB} f_h \beta \cdot \hat{n} f_h e^H \,d\sigma\,
+
\int_{RB}
 \widehat{ f_h }  \beta \cdot \hat{n} \, f_h e^H \, d\sigma \, .
 \nonumber
 \end{eqnarray}
 
The calculation for internal edges and boundaries with periodic conditions both follow the same spirit, whereas the reflective boundary has to be treated separately as it will vanish by itself considering the inflow-outflow regions. 

\subsubsection{Specular Reflection Boundaries}
We divide the reflection boundaries
in inflow and outflow regions
(remembering that we are dealing with 
hyper-surfaces) 
\begin{eqnarray*} 
&& -\frac{1}{2} 
\int_{RB} f_h \beta \cdot \hat{n} f_h e^H \,d\sigma\,
+
\int_{RB}
 \widehat{ f_h }  \beta \cdot \hat{n} \, f_h e^H \, d\sigma \, 
 =
 -\frac{1}{2} 
\int_{\beta\cdot\ \hat{n}>0} f_h \beta \cdot \hat{n} f_h e^H \,d\sigma\,
\\ &&
+
\int_{\beta\cdot\ \hat{n}>0}
 \widehat{ f_h }  \beta \cdot \hat{n} \, f_h e^H \, d\sigma 
 -\frac{1}{2} 
\int_{\beta\cdot\ \hat{n}<0} f_h \beta \cdot \hat{n} f_h e^H \,d\sigma\,
+
\int_{\beta\cdot\ \hat{n}<0}
 \widehat{ f_h }  \beta \cdot \hat{n} \, f_h e^H \, d\sigma \,  ,
 \end{eqnarray*}
 and remembering also that
 the specular reflection boundary condition relates the inflow and the outflow boundary by 
 \begin{equation}
 f(\vec{x},\vec{p},t) \midminus
 =
 f(\vec{x},\vec{p}\,',t) \midplus \, .
 \end{equation}

 The integrals with a factor of 
 $1/2$ vanish each other due to the specular reflection
 (after having done a transformation
 of coordinates from the inflow boundary to the outflow boundary), the remaning term being
\begin{eqnarray*} 
\int_{\beta\cdot\ \hat{n}>0}
 \widehat{ f_h }  |\beta \cdot \hat{n}| \, f_h e^H \, d\sigma 
-
\int_{\beta\cdot\ \hat{n}<0}
 \widehat{ f_h }  |\beta \cdot \hat{n}| \, f_h e^H \, d\sigma 
 =\\
\int_{\beta\cdot\ \hat{n}>0}
 \widehat{ f_h }\midplus  |\beta \cdot \hat{n}| \, f_h\midplus e^H \, d\sigma 
-
\int_{\beta\cdot\ \hat{n}<0}
 \widehat{ f_h }\midminus  |\beta \cdot \hat{n}| \, f_h\midminus e^H \, d\sigma  
=\\
\int_{\beta\cdot\ \hat{n}>0}
 \widehat{ f_h }\midplus  |\beta \cdot \hat{n}| \, f_h\midplus e^H \, d\sigma 
-
\int_{\beta\cdot\ \hat{n}<0}
 \widehat{ f_h }\midminus  |\beta \cdot \hat{n}| \, f_h\midminus e^H \, d\sigma  
=\\
\int_{\beta\cdot\ \hat{n}>0}
 \widehat{ f_h }\midplus  |\beta \cdot \hat{n}| \, f_h\midplus e^H \, d\sigma 
-
\int_{\beta\cdot\ \hat{n}<0}
 \widehat{ f_h }\midminus  |\beta \cdot \hat{n}| \, f_h'\midplus e^H \, d\sigma  
=\\
\int_{\beta\cdot\ \hat{n}>0}
 \widehat{ f_h }\midplus  |\beta \cdot \hat{n}| \, f_h\midplus e^H \, d\sigma 
-
\int_{\beta\cdot\ \hat{n}>0}
 \widehat{ f_h}\midminus '  |\beta \cdot \hat{n}| \, (f_h'\midplus) ' e^{H'} \, d\sigma  ,
 \end{eqnarray*}
but $H=H'$ since
 $\varepsilon(|\vec{p}|) 
 = \varepsilon(|\vec{p}\, '|) $, so
\begin{eqnarray*}
 \int_{\beta\cdot\ \hat{n}>0}
 \widehat{ f_h }\midplus  |\beta \cdot \hat{n}| \, f_h\midplus e^H \, d\sigma 
-
\int_{\beta\cdot\ \hat{n}>0}
 \widehat{ f_h}\midminus '  |\beta \cdot \hat{n}| \, (f_h'\midplus) ' e^{H'} \, d\sigma  
=\\
 \int_{\beta\cdot\ \hat{n}>0}
\left[
 \widehat{ f_h }\midplus   
 -
\widehat{ f_h}\midminus '  
\right]
|\beta \cdot \hat{n}| \, f_h\midplus 
e^H \, d\sigma
=
0,
\end{eqnarray*}
by using the upwind rule as numerical flux \eqref{up-wing-flux},
denoting $z = (\vec{x},\vec{p})$,
\begin{equation}
 \hat{f}_h = 
 \lim_{\delta \to 0^+}
 f_h(z - \delta \beta(z)) .
\end{equation}

%\begin{comment}

\subsubsection{Internal Edges and Periodic Boundaries}

Both internal edges and periodic boundaries are characterized by the fact that each edge has a pairing with another unique edge for which the conditions
$\hat{n}_1 = -\hat{n}_2$ and
$\beta_1 = \beta_2$ hold,
so the idea behind these calculations is to recognize and consider jointly these pairs. 

We distinguish between the boundaries of cells for which $\beta \cdot \hat{n} \geq 0$ and the ones for which $\beta \cdot \hat{n} \leq 0$, defining uniquely the boundaries. Remembering that the upwind flux rule is such that $\hat{f}_h = f_h^-$, we have that the value of the solution inside the cells close to boundaries for which $\beta\cdot\hat{n} \geq 0$ is $f_h^-$, and
for boundaries $\beta \cdot \hat{n} \leq 0 $ the value of the solution inside the cell close to that boundary is $f_h^+$. Then, it follows
\begin{eqnarray*} 
0 &\geq& 
\int_{\Omega_C}  f_h e^H  s(\omega) \partial_t  f_h d\vec{\omega} d\vec{x} 
- 
\int_{IE\cup PB}  
\frac{f_h \beta \cdot \hat{n} f_h e^H}{2}
d\sigma
+
\int_{IE \cup PB}
 f_h^-   \beta \cdot \hat{n}  f_h e^H  d\sigma, 
\nonumber\\
0 &\geq& 
\int_{\Omega_C} \partial_t  f_h  f_h e^H  s(\omega)  d\vec{\omega} d\vec{x} 
- 
\int_{\beta\cdot\hat{n}\geq 0}
\frac{f_h^- |\beta \cdot \hat{n}| f_h^- e^H}{2}
d\sigma
+
\int_{\beta\cdot\hat{n}\geq 0}
 f_h^-  |\beta \cdot \hat{n}|  f_h^- e^H  d\sigma
 \nonumber\\
&\ \ \ &+ 
\frac{1}{2} 
\int_{\beta\cdot\hat{n}\leq 0} f_h^+ |\beta \cdot \hat{n}| f_h^+ e^H d\sigma\,
-
\int_{\beta\cdot\hat{n}\leq 0}
 f_h^-  |\beta \cdot \hat{n}|  f_h^+ e^H  d\sigma.
 \nonumber
\end{eqnarray*}
Thus, using a notation $e_h$ indexing each element (which counts twice element boundaries, and due to this allowed redundancy, is balanced by a factor of 1/2), we have
\begin{eqnarray} 
0 &\geq& 
\int_{\Omega_C} \partial_t  f_h  f_h e^H  s(\omega)  d\vec{\omega} d\vec{x} 
+\frac{1}{2} \left(
\int_{e_h}
 f_h^-  |\beta \cdot \hat{n}| \, f_h^- e^H  d\sigma
-
\int_{e_h}  
\frac{f_h^- |\beta \cdot \hat{n}| f_h^- e^H}{2}
d\sigma
\right. \nonumber\\
&\ \ \ &+ \left.
\frac{1}{2} 
\int_{e_h} f_h^+ |\beta \cdot \hat{n}| f_h^+ e^H d\sigma\,
-
\int_{e_h}
 f_h^-  |\beta \cdot \hat{n}| \, f_h^+ e^H  d\sigma
 \right),
 \nonumber\\
 0 &\geq& 
\int_{\Omega_C} \partial_t  f_h \, f_h e^H \, s(\omega) \, d\vec{\omega} d\vec{x} 
+\frac{1}{2} \left( \frac{1}{2}
\int_{e_h} f_h^- |\beta \cdot \hat{n}| f_h^- e^H \,d\sigma\,
\right. 
 \nonumber\\
&\ \ \ &+  \left.
\frac{1}{2} 
\int_{e_h} f_h^+ |\beta \cdot \hat{n}| f_h^+ e^H \,d\sigma\,
-
\int_{e_h}
 f_h^-  |\beta \cdot \hat{n}| \, f_h^+ e^H \, d\sigma
\right),
 \nonumber\\
 0 &\geq& 
\int_{\Omega_C} \partial_t  f_h \, f_h e^H \, s(\omega) \, d\vec{\omega} d\vec{x} 
 \nonumber\\
&\ \ \ & + \frac{1}{4}  
 \left(
\int_{e_h} f_h^- f_h^- |\beta \cdot \hat{n}|  e^H d\sigma
-2
\int_{e_h}
 f_h^- f_h^+  |\beta \cdot \hat{n}|   e^H  d\sigma
+
\int_{e_h} f_h^+ f_h^+ |\beta \cdot \hat{n}|  e^H \,d\sigma\,
\right),
 \nonumber\\
 0 &\geq& 
\int_{\Omega_C} \partial_t  f_h \, f_h e^H \, s(\omega) \, d\vec{\omega} d\vec{x} 
+
\frac{1}{4} 
\int_{e_h} (f_h^+ - f_h^-)^2 |\beta \cdot \hat{n}|  e^H \,d\sigma\, .
\nonumber
\end{eqnarray}

Since the second term is non-negative, we conclude therefore that
\begin{equation}\label{control_2}
0 \geq
- \frac{1}{4} 
\int_{e_h} (f_h^+ - f_h^-)^2 |\beta \cdot \hat{n}|  e^H \,d\sigma\,
\geq
%\int_{\Omega_C}f_h \partial_t  f_h \,  e^{H(x,p,t)} \, p^2 \, d\vec{\omega} d\vec{x} =
 \frac{1}{2}
\int_{\Omega_C}  \partial_t  f_h^2 e^{H(x,p,t)} \, s(\omega) \, d\vec{\omega} d\vec{x}  \, ,
\end{equation}
and it is in this sense that the numerical solution has stability
with respect to the entropy norm under consideration, namely
\begin{equation}\label{control_2}
%0 \geq
%- \frac{1}{4} 
%\int_{e_h} (f_h^+ - f_h^-)^2 |\beta \cdot \hat{n}|  e^H \,d\sigma\,
%\geq
%\int_{\Omega_C}f_h \partial_t  f_h \,  e^{H(x,p,t)} \, p^2 \, d\vec{\omega} d\vec{x} =
 \frac{1}{2}
\int_{\Omega_C}  \partial_t  f_h^2 e^{H(x,p,t)} \, s(\omega) \, d\vec{\omega} d\vec{x}  \leq  \frac{1}{2}
\int_{\Omega_C}  \partial_t  f_h^2(x,t, 0) e^{H(x,p,0)} \, s(\omega) \, d\vec{\omega} d\vec{x} .
\end{equation}
\end{proof}

%\end{comment}

\subsection{Stability under entropy norm for time dependent Hamiltonian
in mean field limit (1Dx-2Dp Problem)}

We perform  further analysis of these discrete entropy inequalities  applied to the Boltzmann - Poisson system (\ref{vlasov},\ref{poisson}) for self consistent mean field charged transport, in the one space dimensional case, say $x\in[0,L]$, under the assumption for   the electrostatic potential  $\Phi(x)$ solving the  Poisson equation that it satisfies periodic boundary conditions as much as the probability density $f(x,p,t)$. These boundary  conditions on the electrostatic potential can be viewed as having neutral charges in a neighborhood containing  the endpoints $\{0;L\}$ and zero potential bias, that is, the corresponding Poisson boundary value problem for the potential is  
\begin{align}\label{per-poisson}
&- \partial_x^2 \Phi(x,t)=\frac{q}{\epsilon} \left[ N(x)- \rho_h(x,t) \right], \\
&\ \ \Phi(0,t) - \Phi(L,t),  \quad  \partial_x \Phi(0,t)= \partial_x \Phi(L), \  \forall \  t>0. \nonumber
\end{align}
Indeed, these boundary  conditions for the homogeneous problem imply that solutions are determined up to a constant. Thus, in order to obtain existence of solutions, the   Fredlhom  Alternative property indicates   that existence  holds provided the   compatibility condition  $\int_0^L  \left[ N(x') - \rho_h(x',t) \right] dx' =   0$,  which yields neutral total charges  for all times $t$. In addition,  to obtain uniqueness, one needs to prescribe an extra condition on the $x$-space average of the solution $ \int_0^L \Phi(x, t) dx$.

Therefore the following Theorem also holds for the semi-discrete Vlasov - Boltzmann - Poisson system in one $x$-space dimension, with spatial periodic boundary conditions, whose numerical solutions preserved the neutral charges for all times, where we take a compatible discretization of the periodic Poisson problem that is mass preserving as performed in  \cite{MeG-JCP2018}, and so preserves the above mentioned charge neutrality condition for all time.

\begin{theorem}\label{stab-VP}
\emph{(Stability under the entropy norm for a time dependent Hamiltonian in the mean field limit):}
If $\Phi=\Phi(x,t)$, solution of the  boundary value problem \eqref{per-poisson} so the corresponding Hamiltonian is $H(x,p,t)=\varepsilon(p) -q\Phi(x,t)$,   then
 \begin{equation}\label{stab-VP.0}
\left| \left| f_h \right| \right|_{L^2_{e^H p^2}}^2 (t)
=
\int_{\Omega_C}  f_h^2(x,p,\mu,t) e^{H(x,p,t)} \, p^2 \, dp d\mu dx 
\leq 
\left| \left| f_h \right| \right|_{L^2_{e^H p^2}}^2 (0).
\end{equation}
\end{theorem}
\begin{proof}
Because of the divergence free structure of the Hamiltonian $H(x,p,t)=\varepsilon(p) -q\Phi(x,t)$ for every $t>0$, all estimates of Theorem~\ref{thm4.2} apply. In particular, starting from estimate 
\eqref{control_2}, we perform the time differentiation with respect to 
%the Hamiltonian 
$H$
to obtain
%\vspace{-0.25cm}
\begin{align} 
0 \!\geq \!
&- \int_{e_h} \!\! \frac{(f_h^+ - f_h^-)^2}{2}  |\beta \cdot \hat{n}|  e^H d\sigma
%\, \\
%&
\geq
\int_{\Omega_C}\!\!  \partial_t \left(  f_h^2 e^{H} \right) p^2  dp d\mu dx  
\!- \!
\int_{\Omega_C} f_h^2 e^{H} \partial_t \Phi p^2 dp d\mu dx, 
\label{stab-VP.1}
%\nonumber
\end{align}
since  the only time dependent contribution comes from the potential part of the Hamiltonian, which is given in terms of the local mass density by means of the Poisson equation. 
Hence the solution of the  periodic boundary value problem in $x$-space for the Poisson problem \eqref{per-poisson} takes the form 
\vspace{-0.25cm}
\begin{align*} \nonumber
& \Phi(x,t) =   B(t) +
\end{align*} 
\begin{align*} \nonumber
&
\frac{q}{\epsilon} \cdot
%\!\Bigg(
\frac{x}{L} 
\int_0^L 
  \!\Big[\!N(x') \! -\! \rho_h(t,x') \Big] (L - x') dx' %\\
%}_{\in \, \Phi^1_h}
%\underbrace{
%&\qquad \qquad\qquad\qquad\qquad     -  
\!\int_0^x 
  \! \Big[N(x') \!-\! \rho_h(t,x')\Big] (x-x') dx',
%\Bigg),
\\
-\partial_x \Phi%(x,t) 
&=
-\frac{q}{\epsilon}
\Bigg(
 \frac{1}{L}
\int_0^L   
  \Big[N(x') - \rho_h(t,x') \Big] (L-x') dx' - \int_0^x   \Big[N(x') - \rho_h(t,x') \Big] dx'
\Bigg),
\end{align*}
where $B(t)$ is the integrating parameter due to the periodic boundary conditions.

Uniqueness of solutions is obtained from imposing that  $\Phi(x,t)$ has zero average over the domain for all times $t$,  determining $B(t)$ by the following representation,
\begin{equation}
0 = \int_0^L \Phi(x,t) dx=
B(t) L =  \frac{1}{2} 
\frac{q}{\epsilon}
\int_0^L   
  \left[N(x') - \rho_h(t,x') \right] (x'-L) {x'} dx'\, ,
\end{equation}
and thus,  the potential is determined uniquely by 

\begin{align} 
\nonumber
%\begin{split}\nonumber
\Phi(x,t) =
%\underbrace{
\frac{q}{\epsilon} 
\left(
\int_0^L   
  \left[ N - \rho_h \right](t,x') \frac{(x'-L)x'}{2L} dx'
+
\frac{x}{L}  \int_0^L   \left[N - \rho_h \right](t,x') (L - x') dx' \right.  \\
\left.  -   
\int_0^x 
  \left[N - \rho_h \right](t,x') (x - x') dx' \right).
%}_{\in \, \Phi^3_h}
%\end{split}
\end{align}

Notice that the time dependent contribution of the Hamiltonian
yields
\begin{align*} \nonumber
\partial_t \Phi(x,t) 
  = 
\frac{q}{\epsilon}
\Bigg[
&\int_0^L   
\partial_t \rho_h (t,x') \frac{(L-x')x'}{2L} dx'\\
&\ \ +
\int_0^x 
\partial_t \rho_h(t,x')  (x - x') dx' 
-\frac{x}{L} \int_0^L 
 \partial_t \rho_h(t,x') (L - x') dx'\Bigg] ,
%}_{\in \, \Phi^1_h}
%\underbrace{
%}_{\in \, \Phi^3_h}
\end{align*}
as we have assumed that the doping $N(x)$ is independent of time $t$, which can be expressed more compactly as 
\begin{equation} \nonumber
\partial_t \Phi(x,t) 
  = 
\frac{q}{\epsilon}
\left[
\int_0^L   
\partial_t \rho_h (t,x') (L-x') 
\left(
\frac{x'}{2L} - \frac{x}{L}
\right)
dx'
+
\int_0^x 
\partial_t \rho_h(t,x')  (x - x') dx' 
\right].
\end{equation}

 Therefore, replacing this exact formula for $ \partial_t \Phi(x,t) $  into the inequality \eqref{stab-VP.1} yields a mean field, non-local, discrete entropy inequality

\begin{align*} \nonumber
0 \geq &\partial_t
\int_{\Omega_C}   f_h^2 e^{H}  p^2 dp d\mu dx   
- \frac{q}{\epsilon} 
\int_{\Omega_C} f_h^2 e^{H}
\Bigg[
\int_0^L   
\partial_t \rho_h (t,x') (L-x') 
\left(
\frac{x'}{2L} - \frac{x}{L}
\right)
dx' \\ 
&\qquad \qquad \qquad \qquad \qquad +
\int_0^x 
\partial_t \rho_h(t,x')  (x - x') dx' \, 
 \Bigg]  \, p^2 \, dp d\mu dx .
\end{align*}
To this end, one can substitute the partial time derivative of the density by the right hand side of a conservation equation that can be derived simply by integration of the Boltzmann Eq. over the momentum domain. Therefore, since in 1D we have
\begin{equation}
 \partial_t \rho_h(x,t) + \partial_x J_h(x,t) = 0,
\end{equation}
with $J_h(x,t) = \int_{\Omega_p} v(p) f_h(x,p,t) dp$, this yields
\begin{align*} \nonumber
0 \geq \partial_t
\int_{\Omega_C}   f_h^2 e^{H}  &p^2  dp d\mu dx  +  \frac{q}{\epsilon} 
\int_{\Omega_C} f_h^2 e^{H}
\Bigg[
\int_0^L   
\partial_x J (t,x') (L-x') 
\left(
\frac{x'}{2L} - \frac{x}{L}
\right)
dx' \\
&\qquad \qquad \qquad \qquad \qquad \ \ +
\int_0^x 
\partial_{x'} J_h(t,x')  (x - x') dx' 
 \Bigg]  p^2 dp d\mu dx.
\end{align*}

We proceed with an integration by parts to simplify our second term. So

\begin{align*} \nonumber
0 \geq \partial_t
\int_{\Omega_C}   f_h^2 e^{H}   &p^2 dp d\mu dx 
+\frac{q}{\epsilon} 
\int_{\Omega_C} f_h^2 e^{H}
\Bigg[   
J_h(t,0) x 
-
\int_0^L   
J_h(t,x') 
\frac{x - x' + L/2}{L}  
dx'\\
&\qquad \qquad \qquad \qquad \qquad-
J_h(t,0)  x 
+
\int_0^x 
J_h(t,x')   dx' \, 
\Bigg]  \, p^2  dp d\mu dx .
\end{align*}

This last  term reduces to
\begin{align*} \nonumber
0 \geq \partial_t
 \int_{\Omega_C}   f_h^2 e^{H}  &p^2 dp d\mu dx   
%\\ \nonumber
%&& 
+ \frac{q}{\epsilon} 
\int_{\Omega_C} f_h^2 e^{H}
\Bigg[   
\int_0^x 
J_h(t,x')dx'  \\
&\qquad\qquad\qquad -
\!\int_0^L   
\!\!J_h(t,x') 
\Big(
\frac{x - x'}{L}  
+
\frac{1}{2}  
\Big)
dx'
\Bigg]  p^2  dp d\mu dx,
\end{align*}
which can be written, equivalently, as 
\begin{align*} \nonumber
0 \geq \partial_t
\int_{\Omega_C}   f_h^2 e^{H}   &p^2 dp d\mu dx  \, 
+ \frac{q}{\epsilon} 
\int_{\Omega_C} f_h^2 e^{H}
\Bigg[   
\int_0^x   
J_h(t,x') 
\Big(
\frac{1}{2}  
 -
\frac{x - x'}{L}  
\Big)
dx'
\\
&\qquad\qquad\qquad -
\int_x^L   
J_h(t,x') 
\Big(
\frac{1}{2}  
+
\frac{x - x'}{L}  
\Big)
dx'
\Bigg]  \, p^2 \, dp d\mu dx .
\end{align*}

The asymptotic and regular behaviour of our Boltzmann - Poisson problem as time approaches infinity in the  spatial domain given by the interval $[0,L]$ is well known \cite{BenA-Tay, Guo-2002, Guo-2003, Guo-Strain-2006}, in particular  convergence to a stationary state given by the balance of transport due to collisions, and so the corresponding current $J(x,t)$ stabilizes over the whole  interval domain to a constant value, i.e.
$\lim_{t \rightarrow \infty} J_h(x,t) = J_{{0}_h}$. 
Hence, by continuity,  for any given $ \delta > 0$, there exists a finite time $t_{\delta} >0$ such that
$|J_h(x,t) - J_0| < \delta \quad  \forall x \in [0,L], \, \forall t>t_\delta$. 
Therefore, since
\vspace{-0.15cm}
\begin{align*} \nonumber
& 0 \geq
\int_0^x \!  \!
(J_h \!-\! J_{0_{h}})
\Big(\!
\frac{1}{2}  \!-\!
\frac{x \!-\! x'}{L}  
\!\Big)
dx'
\!+\!
\int_x^L   \!\!
(J_h \!-\! J_{0_{h}})
\Big(\!
\frac{x' \!-\! x}{L}  
\!-\!
\frac{1}{2}  
\Big)
dx'
\Bigg]  p^2 dp d\mu dx
\end{align*}
\begin{align*}
&+ \partial_t
\int_{\Omega_C}   f_h^2 e^{H}  p^2 \, dp d\mu dx  
%\\
+ 
%&
\frac{q}{\epsilon}  \int_{\Omega_C} f_h^2 e^{H} \Bigg[ \frac{J_0}{2L}
\Big[    ({L}/{2}  -x + x')^2
\Big|_0^x    
+
(x' - x - L/2)^2 \Big|_x^L   
\Big],  
\end{align*}
and $ ({L}/{2}  
-x + x')^2
|_0^x    
+
(x' - x - {L}/{2})^2 |_x^L   
= 0
$,
our equation reduces to
\begin{align*} \nonumber
0 &\geq \partial_t
\!\int_{\Omega_C}   f_h^2 e^{H}  \, p^2 \, dp d\mu dx+
 \frac{q}{\epsilon}\times 
\\ \nonumber
 & \!
\int_{\Omega_C} f_h^2 e^{H}
\Bigg[   
\int_0^x  \!\! 
(J_h\! -\! J_{0_{h}})
\left(
\frac{1}{2}  
\!-\!
\frac{x - x'}{L}  
\right)
dx'
\!+\!
\int_x^L \!\!  
(J_h \!-\! J_{0_{h}})
\left(
\frac{x' \!-\! x}{L}  
\!-\!
\frac{1}{2}  
\right)
dx'
\Bigg]  p^2 dp d\mu dx 
\end{align*}

\smallskip

Using our argument of convergence to a constant current as time goes to infinity,
\begin{align*} \nonumber
\Bigg|
&\int_{\Omega_C} \!\! f_h^2 e^{H}
\Bigg[   
\int_0^x   
(J_h \!-\! J_{0_{h}})
\big(
\frac{1}{2}  
-
\frac{x \!-\! x'}{L}  
\big)
dx'
+
\int_x^L   
(J_h \!-\! J_{0_{h}})
\big(
\frac{x' \!-\! x}{L}  
\!-\!
\frac{1}{2}  
\Big)
dx'
\Bigg]  \, p^2 \, dp d\mu dx 
 \Bigg| \\[4pt]
& \leq 
 \Bigg|
\int_{\Omega_C} f_h^2 e^{H}
\delta
\Bigg[   
x 
\Big\|
\frac{1}{2}  
-
\frac{x - x'}{L}  
\Big\|_{\infty,[0,x]}
+
(L-x)
\Big\| 
\frac{x' - x}{L}  
-
\frac{1}{2}  
\Big\|_{\infty,[x,L]}
\Big]  \, p^2 \, dp d\mu dx 
 \Bigg|
 \nonumber
\\[4pt]
&= \left|
\int_{\Omega_C} f_h^2 e^{H}
\delta
\left[   
x 
\frac{1}{2}  
+
(L-x)
\frac{1}{2}  
\right]  \, p^2 \, dp d\mu dx 
 \right| 
=
\frac{\delta L}{2}
\left|
\int_{\Omega_C} f_h^2 e^{H}
\, p^2 \, dp d\mu dx 
 \right| . 
\end{align*}

%assuming $ ||e^{H}||_{\infty} \leq C < \infty$ for a given constant $C>0$ valid $\forall t>0, \, \forall x \in [0,L]$. 

Finally, choosing $\delta > 0$ such that for $t_{\delta} > 0$ we have   
$$ \frac{\delta L}{2}
\left|
\int_{\Omega_C} f_h^2 e^{H}
\, p^2 \, dp d\mu dx 
 \right|  
<
\frac{1}{2}
\left|
\partial_t
\int_{\Omega_C}   f_h^2 e^{H}  \, p^2 \, dp d\mu dx
\right|
, 
\quad \mathrm{then}
$$
\vspace{-0.5cm}
\begin{equation} 
0 \geq \frac{d}{dt}
\int_{\Omega_C}   f_h^2 e^{H}  \, p^2 \, dp d\mu dx  
\quad \forall t>t_{\delta}. 
\end{equation}
\vspace{-0.25cm}
In particular, inequality \eqref{stab-VP.0} holds and  Theorem~\ref{stab-VP} statement holds. 
\end{proof}
%\begin{corollary}
%\emph{}
%For the particular case $0 > \partial_t \Phi(x,t) $, there is a decay of the entropy norm  in the time dependent regime.
%\end{corollary}
%\begin{proof}
%Under the assumption 
%$0 > \partial_t \Phi = - |\partial_t \Phi| $,
%we would have 
%\begin{equation}
%0 \geq - 
% \int_{e_h} \frac{(f_h^+ - f_h^-)^2}{2}  |\beta \cdot \hat{n}|  e^H \,d\sigma\,
%-
%\int_{\Omega_C} f_h^2 e^{H} |\partial_t \Phi|  \, p^2 \, dp d\mu dx  
%\geq \partial_t
%\int_{\Omega_C}     f_h^2 e^{H}  \, p^2 \, dp d\mu dx  = \frac{d}{dt} 
%\left| \left| f_h \right| \right|_{L^2_{e^H p^2}}^2 
%\end{equation}
%The assumption $0 > \partial_t \Phi(x,t) $  that the potential is decreasing in time pointwise in the position domain gives us the statement of the corollary above. 
%\end{proof}

\section{Error estimates for semi-discrete DG scheme with curvilinear momentum coordinates}
\label{section:4}

We state in this section our main results regarding error estimation for our DG scheme in curvilinear momentum coordinates at the semi-discrete stage. We present in detail the proofs of those results in an Appendix at the end of this document. 
% several elements in Theorem 5 are unclear: 
% what is the underlying equation ? 
The underlying equation (\ref{eq:underlyingKane})
\cite{CGMS-CMAME2008} is considered for the Theorem 
to be presented below. 

\begin{theorem}
$L^{2}$
 error estimate: Consider the semi-discrete DG solution 
$f_{h}$
 to the linear Boltzmann equation (under cut-off and inflow BC, using the upwind rule for the numerical fluxes)
%  what are the numerical fluxes? 
for $f_h$ belonging to the trial space $V_h^k$ 
and $g_h$ in the test space $V^k_h$ of piecewise continuous polynomials of degree $k$, 
% which space is used for fh and gh ? 
\[
(\partial_{t}f_{h},g_{h})_{\mathcal{T}_{h}}+\mathcal{A}(f_{h},g_{h})=\mathcal{L}(g_{h}),
\]
\[
(\partial_{t}f_{h},g_{h})_{\mathcal{T}_{h}}=\sum_{ikm}\int_{ikm}\partial_{t}f_{h}g_{h}e^{H}s(w)d\vec{w}d\vec{x},
\quad
s(w)=p^{2},\quad\vec{w}=(p,\mu),
\]
\[
\mathcal{A}(f_{h},g_{h})=
\]
\[
-\sum_{ikm}\int_{ikm}\partial_{p}\varepsilon(p)\,f_{h}\,\mu\,\partial_{x}(g_{h}e^{H})\,p^{2}dpd\mu dx\pm\sum_{km}'\int_{km}\partial_{p}\varepsilon\,\widehat{f_{h}\mu}|_{x_{i\pm}}\,g_{h}e^{H}|_{x_{i\pm}}^{\mp}\,p^{2}dpd\mu -
\]
\[
\sum_{ikm}\int_{ikm}p^{2}(-qE)(x,t)f_{h}\mu\,\partial_{p}(g_{h}e^{H})\,d\mu dx\pm\sum_{im}'\int_{im}p_{k^{\pm}}^{2}\,(-q\widehat{Ef_{h}\mu})|_{p_{k\pm}}g_{h}e^{H}|_{p_{k\pm}}^{\mp}d\mu dx
\]
\[
-\sum_{ikm}\int_{ikm}(1\!-\!\mu^{2})f_{h}(-qE)(x,t)\,\partial_{\mu}(g_{h}e^{H})pdpd\mu dx
\]
\[
\pm\sum_{ik}'\int_{ik}\!\!(1\!-\!\mu_{m\pm}^{2})(-q\widehat{Ef_{h}})|_{\mu_{m\pm}}\,g_{h}e^{H}|_{\mu_{m\pm}}^{\mp}pdpdx
\]
\[
-\sum_{ikm}\int_{\Omega_{C}}Q(f_{h})g_{h}e^{H}s(w)d\vec{w}d\vec{x},
\]
% what are the domains 'ikm', 'km', 'im' ... ?
where the indices $ikm$ help identify the volume elements in which the domain is decomposed, and $km$, $im$, etc. index the boundaries of the aforementioned elements over the respective indexed variables ($i$ indexing intervals of the variable $x$, $k$ for intervals of the variable $p$, $m$ for intervals of the variable $\mu$), 
% how is E defined ? 
with $E(x,t)$ being a given time-dependent electric field
over the points $x$ in the position domain, 
 where the primed sums for the boundary integrals indicate that the terms
 related to the cut-off and inflow boundaries are excluded, and finally,
\[
\mathcal{L}(g_{h})=-\left\langle f^{in},g_{h}\beta\cdot\hat{n}\right\rangle _{\Gamma^{-}}
\]
denoting a surface integral over the inflow boundary.
 We have 
\[
||f_{h}(t,\cdot,\cdot)-f(t,\cdot,\cdot)||_{L^{2}(\Omega_{D})}\leq C\sqrt{t}e^{Cht}h^{k+1/2}|f|_{L^{\infty}([0,t],H^{k+1}(\Omega_{D}))},
\]
 with 
$C=C(diam(\Omega_{D}),||\beta||_{W^{\{1,\infty\}}(\Omega_{D})})$
 not depending on 
 $h$
 or 
 $t$.
\end{theorem}

We also have the following result. 

\begin{theorem}
If 
$f_{h}$
 is the semidiscrete DG solution to our Boltzmann equation with a linear
 collision operator in the semiconductor problem, then 
\[
||f_{h}(t,\cdot,\cdot)-M||\leq 
C\sqrt{t}e^{Cht}h^{k+1/2}|f|_{L^{\infty}([0,t],H^{k+1}(\Omega_{D}))}
+
3e^{-\lambda t}||f_{0}-M||_{B^{2}(\mathbb{R}^{d})}
\]
with the constant 
$C=C(diam(\Omega_{D}),||\beta||_{W^{\{1,\infty\}}(\Omega_{D})}).$
\end{theorem}

\section{Conclusions}
The work presented here relates to the
development of entropy stable 
%\textbf{
and positivity preserving
%}
DG schemes 
for BP models of collisional electron transport in semiconductors. Due to the physics of 
energy transitions given by Planck's law, and to reduce the dimension of the associated collision operator, given its mathematical form,
we pose the Boltzmann Equation for electron transport in
curvilinear coordinates for the momentum. This is a more general form that includes two previous BP models in different coordinate systems used in \cite{CGMS-CMAME2008} and \cite{MGCMSC-CMAME2017}
as particular cases. 
We consider first the 1D diode problem with azimuthal symmetry assumptions, which give us a 3D plus time problem. 
We choose for this problem the spherical coordinate system $\vec{p}(p,\mu,\varphi)$, slightly different to choices in previous works in the literature, because its DG formulation gives simpler integrals involving just piecewise polynomial functions for both transport and collision terms, which is convenient for Gaussian quadrature. 
We have been able to prove the full stability of the 
semi-discrete DG scheme formulated under an entropy norm and the decay of this norm over time for a 3D plus time problem (1D in position and 2D in momentum), assuming periodic boundary conditions for simplicity. 
%For the simpler case of a time independent Hamiltonian, the decay of the entropy norm of the numerical solution over time follows as a corollary. 
This highlights the importance of the dissipative properties of our collisional operator given by its entropy inequalities. The entropy norm depends on the full time dependent Hamiltonian rather than just the Maxwellian associated solely to the kinetic energy. 
We prove another stability result for a 5D plus time problem (2D in position, 3D in momentum) considering in this case not only periodic but also specular reflection boundary conditions, where the integral associated to each reflecting boundary vanishes by itself due to specularity.  
%\textbf{
Regarding positivity preserving DG schemes, using the strategy in \cite{ZhangShu1}, \cite{ZhangShu2}, \cite{CGP}, we treat the collision operator as a source term, and find convex combinations of the transport and collision terms which guarantee the preservation of positivity of the cell average of our numerical probability density function at the next time step. The positivity of the numerical solution to the pdf in the whole domain can be guaranteed by applying the limiters in \cite{ZhangShu1,ZhangShu2} that preserve the cell average modifying the slope of the piecewise linear solutions to make the function non-negative. 
%}

%\appendix
\section*{Appendix 1}
We present in this appendix the detailed proofs of our statements in Section 3.
\begin{theorem}
$L^{2}$
 error estimate: Consider the semi-discrete DG solution 
$f_{h}$
 to the linear Boltzmann equation (under cut-off and inflow BC)
\[
(\partial_{t}f_{h},g_{h})_{\mathcal{T}_{h}}+\mathcal{A}(f_{h},g_{h})=\mathcal{L}(g_{h}),
\]
\[
(\partial_{t}f_{h},g_{h})_{\mathcal{T}_{h}}=\sum_{ikm}\int_{ikm}\partial_{t}f_{h}g_{h}e^{H}s(w)d\vec{w}d\vec{x},
\quad
s(w)=p^{2},\quad\vec{w}=(p,\mu),
\]
\[
\mathcal{A}(f_{h},g_{h})=-\sum_{ikm}\int_{ikm}\partial_{p}\varepsilon(p) f_{h} \mu \partial_{x}(g_{h}e^{H}) p^{2}dpd\mu dx
\]
\[
\pm
\sum_{km}'\int_{km}\partial_{p}\varepsilon
\widehat{f_{h}\mu}|_{x_{i\pm}} g_{h}e^{H}|_{x_{i\pm}}^{\mp} p^{2}dpd\mu +
\]
\[
\sum_{ikm}\int_{ikm}p^{2}qE(x,t)f_{h}\mu\,\partial_{p}(g_{h}e^{H})\,d\mu dx\pm\sum_{im}'\int_{im}p_{k^{\pm}}^{2}\,(-q\widehat{Ef_{h}\mu})|_{p_{k\pm}}g_{h}e^{H}|_{p_{k\pm}}^{\mp}d\mu dx +
\]
\[
\sum_{ikm}\int_{ikm}(1\!-\!\mu^{2})f_{h}qE(x,t)\,\partial_{\mu}(g_{h}e^{H})pdpd\mu dx
\]
\[
\mp
\sum_{ik}'\int_{ik}\!\!(1\!-\!\mu_{m\pm}^{2})q\widehat{Ef_{h}}|_{\mu_{m\pm}}\,g_{h}e^{H}|_{\mu_{m\pm}}^{\mp}pdpdx
\]
\[
-\sum_{ikm}\int_{\Omega_{C}}Q(f_{h})g_{h}e^{H}s(w)d\vec{w}d\vec{x},
\]
 where the primed sums for the boundary integrals indicate that the terms
 related to the cut-off and inflow boundaries are excluded, and finally,
\[
\mathcal{L}(g_{h})=-\left\langle f^{in},g_{h}\beta\cdot\hat{n}\right\rangle _{\Gamma^{-}}
\]
denoting a surface integral over the inflow boundary.
 We have 
\[
||f_{h}(t,\cdot,\cdot)-f(t,\cdot,\cdot)||_{L^{2}(\Omega_{D})}\leq C\sqrt{t}e^{Cht}h^{k+1/2}|f|_{L^{\infty}([0,t],H^{k+1}(\Omega_{D}))},
\]
 with 
$C=C(diam(\Omega_{D}),||\beta||_{W^{\{1,\infty\}}(\Omega_{D})})$
 not depending on 
 $h$
 or 
 $t$.
\end{theorem}
\begin{proof}
The classical (exact) solution 
\ $f$
 satisfies the weak formulation for the solution of the DG scheme.
 Therefore it also holds that 
\ 
\[
(\partial_{t}f,g_{h})_{\mathcal{T}_{h}}+\mathcal{A}(f,g_{h})=\mathcal{L}(g_{h})
\]
for all test functions 
$g_{h}.$
 The error is naturally defined as 
\ $\mathbb{E=}f-f_{h},$
 so we have
\[
(\partial_{t}\mathbb{E},g_{h})_{\mathcal{T}_{h}}+\mathcal{A}(\mathbb{E},g_{h})=0
\]
 by linearity of the operator 
 $\mathcal{A}$.
 Now, the error will be decomposed in two parts, 
\[
\mathbb{E=\mathcal{E}+}E_{h},
\]
 with the first one related to the error in 
 $L^{2}-$
projecting 
 $f$
 in the DG-FEM space,
\[
\mathcal{E}=f-\mathbb{P}f,
\]
 and the second part being the difference between this projection and the numerical
 solution to the DG scheme, 
\[
E_{h}=\mathbb{P}f-f_{h},
\]
 the latter error contribution belonging to the FEM space.
 Therefore we can choose 
 $g_{h}=E_{h},$
 and then, 
\[
(\partial_{t}\mathbb{E},E_{h})_{\mathcal{T}_{h}}+\mathcal{A}(\mathbb{E},E_{h})=0,
\]
 so the remainder equation is 
\[
(\partial_{t}\mathbb{\mathcal{E}},E_{h})_{\mathcal{T}_{h}}+\mathcal{A}(\mathbb{\mathcal{E}},E_{h})+(\partial_{t}E_{h},E_{h})_{\mathcal{T}_{h}}+\mathcal{A}(E_{h},E_{h})=0.
\]

We have that 
 $(\partial_{t}\mathcal{E},E_{h})_{\mathcal{T}_{h}}=0.$
 This happens because 
 $\mathcal{E}=f-\mathbb{P}f$
 is the 
 $L^{2}-$
projection of 
 $f$
 into the FEM space, and by definition 
 $\mathbb{P}f$
 is the function in the FEM space whose 
 $L^{2}-$
Fourier
 coefficients when represented in a basis for the FEM space are the same
 as the 
 $L^{2}-$
inner products between 
$f$
 and the basis elements.
 Namely, 
\[
(f,w_{h})_{\mathcal{T}_{h}}=(\mathbb{P}f,w_{h})_{\mathcal{T}_{h}}
\]
 as the equality above holds for all elements in the basis set and therefore
 for any 
 $w_{h}$
 in the DG-FEM space.
 Therefore
we get the equality we wanted to prove,
\[
(\mathcal{E},w_{h})_{\mathcal{T}_{h}}=0.
\]

 Now we consider the last term in the remainder equation, 
\[
\mathcal{A}(E_{h},E_{h})=\frac{1}{4}\int_{e_{h}}(E_{h}^{+}-E_{h}^{-})^{2}|\beta\cdot\widehat{n}|e^{H}d\sigma-\int_{\Omega_{C}}Q(E_{h})E_{h}e^{H}s(w)d\vec{w}d\vec{x}
\]
which, if we use in the remainder equation, will give us 
\[
\mathcal{A}(\mathbb{\mathcal{E}},E_{h})+(\partial_{t}E_{h},E_{h})_{\mathcal{T}_{h}}+\frac{\int_{e_{h}}(E_{h}^{+}-E_{h}^{-})^{2}|\beta\cdot\widehat{n}|e^{H}d\sigma}{4}=\]
\[
\int_{\Omega_{C}}Q(E_{h})E_{h}e^{H}s(w)d\vec{w}d\vec{x}\leq0 
\]
 given the entropy inequality for our collisional operator.
 So 
\[
(\partial_{t}E_{h},E_{h})_{\mathcal{T}_{h}}+\frac{1}{4}\int_{e_{h}}(E_{h}^{+}-E_{h}^{-})^{2}|\beta\cdot\widehat{n}|e^{H}d\sigma\leq-\mathcal{A}(\mathbb{\mathcal{E}},E_{h}).
\]

 We will study the bound in this inequality.
 We have 
\ 
\[
\mathcal{A}(\mathcal{E},E_{h})=
\]
\[
-\sum_{ikm}\int_{ikm}\partial_{p}\varepsilon(p)\,\mathcal{E}\,\mu\,\partial_{x}(E_{h}e^{H})\,p^{2}dpd\mu dx\pm\sum_{km}'\int_{km}\partial_{p}\varepsilon\,\widehat{\mathcal{E}\mu}|_{x_{i\pm}}\,E_{h}e^{H}|_{x_{i\pm}}^{\mp}\,p^{2}dpd\mu
\]

\[
-\sum_{ikm}\int_{ikm}p^{2}(-qE)(x,t)\mathcal{E}\mu\,\partial_{p}(E_{h}e^{H})\,d\mu dx\pm\sum_{im}'\int_{im}p_{k^{\pm}}^{2}\,(-q\widehat{E\mathcal{E}\mu})|_{p_{k\pm}}E_{h}e^{H}|_{p_{k\pm}}^{\mp}d\mu dx
\]

\[
-\sum_{ikm}\int_{ikm}(1\!-\!\mu^{2})\mathcal{E}(-qE)(x,t)\,\partial_{\mu}(E_{h}e^{H})pdpd\mu dx
\pm
\]
\[
\sum_{ik}'\int_{ik}\!\!(1\!-\!\mu_{m\pm}^{2})(-q\widehat{E\mathcal{E}})|_{\mu_{m\pm}}\,E_{h}e^{H}|_{\mu_{m\pm}}^{\mp}pdpdx
\]

\[
-\sum_{ikm}\int_{\Omega_{C}}Q(\mathcal{E})E_{h}e^{H}s(w)d\vec{w}d\vec{x},
\]

 which we will decompose into three terms, 

\[
T_{1}=-\sum_{ikm}\int_{ikm}(1\!-\!\mu^{2})\mathcal{E}(-qE)(x,t)\,\partial_{\mu}(E_{h}e^{H})pdpd\mu dx
\]

\[
-\sum_{ikm}\int_{ikm}\partial_{p}\varepsilon(p)\,\mathcal{E}\,\mu\,\partial_{x}(E_{h}e^{H})\,p^{2}dpd\mu dx
-\sum_{ikm}\int_{ikm}p^{2}(-qE)(x,t)\mathcal{E}\mu\,\partial_{p}(E_{h}e^{H})\,d\mu dx,
\]

\[
T_{2}=\pm\sum_{km}'\int_{km}\partial_{p}\varepsilon\,\widehat{\mathcal{E}\mu}|_{x_{i\pm}}\,E_{h}e^{H}|_{x_{i\pm}}^{\mp}\,p^{2}dpd\mu
\pm\sum_{im}'\int_{im}p_{k^{\pm}}^{2}\,(-q\widehat{E\mathcal{E}\mu})|_{p_{k\pm}}E_{h}e^{H}|_{p_{k\pm}}^{\mp}d\mu dx
\]

\[
\pm\sum_{ik}'\int_{ik}\!\!(1\!-\!\mu_{m\pm}^{2})(-q\widehat{E\mathcal{E}})|_{\mu_{m\pm}}\,E_{h}e^{H}|_{\mu_{m\pm}}^{\mp}pdpdx,
\]

\[
T_{3}=-\sum_{ikm}\int_{\Omega_{C}}Q(\mathcal{E})E_{h}e^{H}s(w)d\vec{w}d\vec{x},
\]

 so 
\[
\mathcal{-A}(\mathcal{E},E_{h})\leq|\mathcal{A}(\mathcal{E},E_{h})|\leq|T_{1}|+|T_{2}|+|T_{3}|,
\]

 and we will bound the terms individually.
 For 
\ $T_{1},$
 we first point out that the Hamiltonian is related to the transport vector
 since (using 
 $K_{B}T$
 units)
\[
H(x,p,t)=\varepsilon(p)-qV(x,t),\quad\partial_{\mu}H=0,
\]
 where the potential gives the electric field by 
$
 E(x,t)=-\partial_{x}V(x,t)
$.
So
\[
T_{1}=-\sum_{ikm}\int_{ikm}\partial_{p}\varepsilon(p)\,\mathcal{E}\,\mu\,\partial_{x}(E_{h}e^{H})\,p^{2}dpd\mu dx
\]
\[
-\sum_{ikm}\int_{ikm}p^{2}(-qE)(x,t)\mathcal{E}\mu\,\partial_{p}(E_{h}e^{H})\,d\mu dx
\]
\[
-\sum_{ikm}\int_{ikm}(1\!-\!\mu^{2})\mathcal{E}(-qE)(x,t)\,e^{H}\partial_{\mu}E_{h}pdpd\mu dx=
\]
\[
-\sum_{ikm}\int_{ikm}\partial_{p}\varepsilon(p)\,\mathcal{E}\,\mu\,(\partial_{x}E_{h}e^{H}+E_{h}\partial_{x}e^{H})\,p^{2}dpd\mu dx
\]
\[
-\sum_{ikm}\int_{ikm}p^{2}(-qE)(x,t)\mathcal{E}\mu\,(\partial_{p}E_{h}e^{H}+E_{h}\partial_{p}e^{H})\,d\mu dx +
\]
\[
\sum_{ikm}\int_{ikm}(1\!-\!\mu^{2})\mathcal{E}qE(x,t)\,e^{H}\partial_{\mu}E_{h}pdpd\mu dx
=
\]
\[
\sum_{ikm}\int_{ikm}\partial_{p}\varepsilon(p)\,\mathcal{E}\,\mu\,(qE_{h}\partial_{x}V-\partial_{x}E_{h})e^{H}\,p^{2}dpd\mu dx
\]

\[
-\sum_{ikm}\int_{ikm}p^{2}(-qE)(x,t)\mathcal{E}\mu\,(\partial_{p}E_{h}+E_{h}\partial_{p}\varepsilon(p))e^{H}\,d\mu dx
\]

\[
-\sum_{ikm}\int_{ikm}(1\!-\!\mu^{2})\mathcal{E}(-qE)(x,t)\,e^{H}\partial_{\mu}E_{h}pdpd\mu dx=
\]

\[
=-\sum_{ikm}\int_{ikm}e^{H}\partial_{p}\varepsilon(p)\,\mathcal{E}\,\mu\,\partial_{x}E_{h}\,p^{2}dpd\mu dx
+\sum_{ikm}\int_{ikm}e^{H}qE(x,t)\mathcal{E}\mu\,\partial_{p}E_{h}p^{2}\,d\mu dx
\]

\[
+\sum_{ikm}\int_{ikm}(1\!-\!\mu^{2})\mathcal{E}qE(x,t)\,e^{H}\partial_{\mu}E_{h}pdpd\mu dx=
\]

\[
=\sum_{ikm}\int_{ikm}e^{H}\mathcal{E}[-\partial_{p}\varepsilon(p)\,\,\mu\,\partial_{x}E_{h}+qE(x,t)\mu\,\partial_{p}E_{h}+(1\!-\!\mu^{2})qE(x,t)\,\partial_{\mu}E_{h}/p]\,p^{2}dpd\mu dx
\]

\[
=\sum_{ikm}\int_{ikm}\mathcal{E}(-\partial_{p}\varepsilon\mu,qE\mu,qE\frac{(1\!-\!\mu^{2})}{p})\cdot\partial E_{h}\,e^{H}p^{2}dpd\mu dx,
\]

 where 
 $\partial=\partial_{(x,p,\mu)}$
 is understood.
 So we have expressed 
 $T_{1}$
 in the form 
\[
T_{1}=\sum_{ikm}(\mathcal{E},\beta\cdot\partial E_{h})_{ikm},
\]
 with the transport vector 
 $\beta$
 defined by 
\[
\beta=(-\partial_{p}\varepsilon\mu,qE\mu,qE
{(1\!-\!\mu^{2})}/{p}).
\]

This vector depends on 
 $(x,p,\mu),$
 so we proceed to take an average of it over the elements, such that its
 projection over each one of the normals 
 $\widehat{n}=\widehat{e}_{x},\widehat{e}_{p},\widehat{e}_{\mu}$
 at the boundaries satisfies
\[
<\beta^{0}\cdot\widehat{n},1>_{\partial K}=<\beta\cdot\widehat{n},1>_{\partial K},
\]

\[
\beta=(-\partial_{p}\varepsilon\,\mu,qE\mu,qE(1\!-\!\mu_{m}^{2})|_{\mu_{m-}}^{\mu_{m+}}/p),
\]
so this constant vector is defined by 
\[
\beta_{ikm}^{0}=\frac{<-\partial_{p}\varepsilon\mu,1>_{km}}{<1,1>_{km}}\widehat{e}_{x}+\frac{<qE\mu,1>_{im}}{<1,1>_{im}}\widehat{e}_{p}+\frac{<qE(1\!-\!\mu^{2})|_{\mu_{m-}}^{\mu_{m+}}/p,1>_{ik}}{<1,1>_{ik}}\widehat{e}_{\mu}.
\]

 By definition, the vector 
\[
\beta^{0}=\sum_{ikm}\chi_{ikm}\beta_{ikm}^{0}
\]
is such that 
\ $\beta^{0}\cdot\partial E_{h}$
 is in the DG-FEM space, which implies that 
\[
\sum_{ikm}(\mathcal{E},\beta^{0}\cdot\partial E_{h})_{ikm}=(\mathcal{E},\beta^{0}\cdot\partial E_{h})_{\mathcal{T}_{h}}=0,
\]

 and therefore 
\[
T_{1}=\sum_{ikm}(\mathcal{E},\beta\cdot\partial E_{h})_{ikm}=\sum_{ikm}(\mathcal{E},(\beta-\beta^{0})\cdot\partial E_{h})_{ikm}.
\]

Now we can bound this term as 
\[
|T_{1}|\leq\sum_{ikm}||\mathcal{E},(\beta-\beta^{0})\cdot\partial E_{h}||_{ikm}\leq\sum_{ikm}||\mathcal{E}||_{L_{ikm}^{2}}||\beta-\beta^{0}||_{L_{ikm}^{\infty}}||\partial E_{h}||_{L_{ikm}^{2}}.
\]

 We will now bound the last factor by making use of the inverse inequality,
 which states that there's a constant 
 $C_{ikm}$
 s.t.
 
\[
||\partial E_{h}||_{L_{ikm}^{2}}\leq C_{ikm}||E_{h}||_{L_{ikm}^{2}}/h_{ikm},
\]

 which we use now to get
\[
|T_{1}|\leq\sum_{ikm}||\mathcal{E}||_{L_{ikm}^{2}}||E_{h}||_{L_{ikm}^{2}}C_{ikm}||\beta-\beta^{0}||_{L_{ikm}^{\infty}}/h_{ikm}
\]
\[
\leq\{\sum_{ikm}||\mathcal{E}||_{L_{ikm}^{2}}||E_{h}||_{L_{ikm}^{2}}\}\{\max_{ikm}\quad C_{ikm}||\beta-\beta^{0}||_{L_{ikm}^{\infty}}/h_{ikm}\},
\]

\[
|T_{1}|\leq||\mathcal{E}||_{L_{\mathcal{T}_{h}}^{2}}||E_{h}||_{L_{\mathcal{T}_{h}}^{2}}\max_{ikm}\quad\{C_{ikm}||\beta-\beta^{0}||_{L_{ikm}^{\infty}}/h_{ikm}\},
\]

\[
|T_{1}|\leq||\mathcal{E}||_{L_{\mathcal{T}_{h}}^{2}}||E_{h}||_{L_{\mathcal{T}_{h}}^{2}}|\beta|_{W^{\{1,\infty\}}(\mathcal{T}_{h})},
\]

 using the results in \cite{Guzman}  for the last inequality.
 We proceed now to use a known estimate for 
 $L^{2}-$
projections, 
\[
||\mathcal{E}||_{L_{ikm}^{2}}+h_{ikm}^{1/2}||\mathcal{E}||_{L_{ikm}^{2}}+h_{ikm}||\partial\mathcal{E}||_{L_{ikm}^{2}}\leq Ch_{ikm}^{k+1}|f|_{H_{ikm}^{k+1}},
\]
therefore the $1^{\mathrm{st}}$ summand can be bounded over the triangulation and
 we'll have 
\[
|T_{1}|\leq Ch^{k+1}|f|_{H_{(\mathcal{T}_{h})}^{k+1}}||E_{h}||_{L_{\mathcal{T}_{h}}^{2}}|\beta|_{W^{\{1,\infty\}}(\mathcal{T}_{h})}.
\]
Now we'll proceed with the estimate of 
\ $T_{2},$
\[
T_{2}=\pm\sum_{km}'\int_{km}\partial_{p}\varepsilon\,\widehat{\mathcal{E}\mu}|_{x_{i\pm}}\,E_{h}e^{H}|_{x_{i\pm}}^{\mp}\,p^{2}dpd\mu
\pm\sum_{im}'\int_{im}p_{k^{\pm}}^{2}\,(-q\widehat{E\mathcal{E}\mu})|_{p_{k\pm}}E_{h}e^{H}|_{p_{k\pm}}^{\mp}d\mu dx
\]

\[
\pm\sum_{ik}'\int_{ik}\!\!\frac{(1\!-\!\mu_{m\pm}^{2})}{p}(-q\widehat{E\mathcal{E}})|_{\mu_{m\pm}}\,E_{h}e^{H}|_{\mu_{m\pm}}^{\mp}p^{2}dpdx,
\]

 which if we consider added over all the triangulation we have 
\[
T_{2}=-<E_{h}e^{H},(-\partial_{p}\varepsilon\widehat{\mathcal{E}\mu},q\widehat{E\mathcal{E}\mu},\frac{(1\!-\!\mu_{m\pm}^{2})}{p}q\widehat{E\mathcal{E}})\cdot\hat{n}\,>_{\partial\mathcal{T}_{h}},
\]

 and, if we remember that 
\ $\beta=(-\partial_{p}\varepsilon\,\mu,qE\mu,qE(1\!-\!\mu_{m}^{2})|_{\mu_{m-}}^{\mu_{m+}}/p)$,
 then 
\[
T_{2}=-<E_{h}e^{H},\widehat{\beta\mathcal{E}}\cdot\hat{n}\,>_{\partial\mathcal{T}_{h}},
\]

 therefore, using  DG-FEM results related to volume and boundary
 integrals,
\[
|T_{2}|=|<E_{h}e^{H},\widehat{\beta\mathcal{E}}\cdot\hat{n}\,>_{\partial\mathcal{T}_{h}}|=|\frac{1}{2}<e^{H}E_{h}|_{+}^{-},|\widehat{\beta\mathcal{E}}\cdot\widehat{n}|\,>_{e_{h}}|,
\]

 where the notation 
 $e_{h}$
 indicates edge redundancy in counting, and therefore the factor of 1/2,
 and proceeding further,
\[
|T_{2}|\leq\frac{1}{8}||[e^{H}E_{h}]\sqrt{|\widehat{\mathcal{\beta}}\cdot\widehat{n}|}\,||^{2}{}_{L^{2}(e_{h})}+\frac{1}{2}||\mathcal{E^{-}}\sqrt{|\widehat{\beta}\cdot\widehat{n}|}\,||^{2}{}_{L^{2}(e_{h})},
\]

 and bounding the second term, we have
\[
|T_{2}|\leq\frac{1}{8}||[e^{H}E_{h}]\sqrt{|\widehat{\mathcal{\beta}}\cdot\widehat{n}|}\,||^{2}{}_{L^{2}(e_{h})}+C||\beta||_{L_{\mathcal{T}_{h}}^{\infty}}||\mathcal{E}||^{2}{}_{L^{2}(e_{h})},
\]

\[
|T_{2}|\leq\frac{1}{8}||[e^{H}E_{h}]\sqrt{|\widehat{\mathcal{\beta}}\cdot\widehat{n}|}\,||^{2}{}_{L^{2}(e_{h})}+C||\beta||_{L_{\mathcal{T}_{h}}^{\infty}}h^{2k+1}|f|_{H^{k+1}(\mathcal{T}_{h})}^{2}.
\]

 The only difference in this case is the appearance of the exponential of
 the Hamiltonian in one of the bounding terms.
 Finally, we proceed with
 $T_{3}.$
 We have 
\[
T_{3}=-\sum_{ikm}\int_{\Omega_{C}}Q(\mathcal{E})E_{h}e^{H}s(w)d\vec{w}d\vec{x}=-(Q(\mathcal{E}),E_{h})_{\mathcal{T}_{h}},
\]

by definition of the inner product under the entropy norm
\[
(g_{h},f_{h})=\int_{\Omega_{C}}g_{h}f_{h}e^{H}s(w)d\vec{w}d\vec{x},
\]

where 
 $E_{h}$
 belongs to the DG-FEM space.
 So 
\[
|T_{3}|=|\sum_{ikm}\int_{\Omega_{C}}Q(\mathcal{E})E_{h}e^{H}s(w)d\vec{w}d\vec{x}|=|(Q(\mathcal{E}),E_{h})_{\mathcal{T}_{h}}|\leq||Q(\mathcal{E})||{}_{L^{2}(\mathcal{T}_{h})}\cdot||E_{h}||_{L^{2}(\mathcal{T}_{h})}
\]
 by Cauchy-Schwarz. We only need to study the 
 $L^{2}$-norm of 
\ $Q(\mathcal{E}).$
We have
\[
||Q(\mathcal{E})||^{2}{}_{L^{2}(\mathcal{T}_{h})}=(Q(\mathcal{E}),Q(\mathcal{E}))_{\mathcal{T}_{h}}=\sum_{ikm}\int_{\Omega_{C}}Q(\mathcal{E})Q(\mathcal{E})e^{H}s(w)d\vec{w}d\vec{x},
\]
 but we know our collision operator has the following structure, 
\[
\sum_{ikm}\int_{\Omega_{C}}Q(f)ge^{H}s(w)d\vec{w}d\vec{x}=
\]
\[
-\frac{1}{2}\sum_{ikm}\int_{\Omega_{C}}S(\vec{p}'\rightarrow\vec{p})e^{-H'}(\frac{f'}{e^{-H'}}-\frac{f}{e^{-H}})(g'-g)d\vec{p}d\vec{p}'d\vec{x},
\]
 so if we apply it to our particular case, we have 
\[
\sum_{ikm}\int_{\Omega_{C}}Q(\mathcal{E})Q(\mathcal{E})e^{H}s(w)d\vec{w}d\vec{x}=
\]
\[
-\frac{1}{2}\sum_{ikm}\int_{\Omega_{C}}S(\vec{p}'\rightarrow\vec{p})e^{-H'}(\frac{\mathcal{E}'}{e^{-H'}}-\frac{\mathcal{E}}{e^{-H}})(Q(\mathcal{E}')-Q(\mathcal{E}))d\vec{p}d\vec{p}'d\vec{x}.
\]

We therefore conclude that
\[
||Q(\mathcal{E})||^{2}{}_{L^{2}(\mathcal{T}_{h})}=(Q(\mathcal{E}),Q(\mathcal{E}))_{\mathcal{T}_{h}}=
\]
\[
\sum_{ikm}\int_{\Omega_{C}}|Q(\mathcal{E})|^{2}e^{H}s(w)d\vec{w}d\vec{x}\leq
\sum_{ikm}\int_{\Omega_{C}}\{\sum_{j=-1}^{+1}K_{j}
e^{H}s(w)d\vec{w}d\vec{x}
[\frac{\partial\vec{p}}{\partial(\varepsilon,\mu,\phi)}\chi](\varepsilon+j\hbar\omega_{p})
\times 
\]
\[
4\pi[e^{\frac{j\hbar\omega_{p}}{2K_{B}T_{L}}}||\mathcal{E}(\varepsilon'+j\hbar\omega_{p},\mu',\phi')||_{L^{\infty}(\varepsilon',\mu',\phi')}+e^{\frac{-j\hbar\omega_{p}}{2K_{B}T_{L}}}|\mathcal{E}|]\}^{2}
\leq
(4\pi)^{2} \times
\]
\[
\sum_{ikm}\int_{\Omega_{C}}\{\sum_{j=-1}^{+1}K_{j}[J\chi](\varepsilon+j\hbar\omega_{p})[e^{\frac{j\hbar\omega_{p}}{2K_{B}T_{L}}}||\mathcal{E}||_{L^{\infty}(\varepsilon',\mu',\phi')}+e^{\frac{-j\hbar\omega_{p}}{2K_{B}T_{L}}}|\mathcal{E}|]\}^{2}e^{H}s(w)d\vec{w}d\vec{x}
\]
\[
=
(4\pi)^{2}\sum_{ikm}\int_{\Omega_{C}}\{K_{0}[J\chi](\varepsilon)[||\mathcal{E}||_{L^{\infty}(\varepsilon',\mu',\phi')}+|\mathcal{E}|]+
\]
\[
+\sum_{\pm}K_{\pm}[J\chi](\varepsilon\pm\hbar\omega_{p})[e^{\frac{\pm\hbar\omega_{p}}{2K_{B}T_{L}}}||\mathcal{E}||_{L^{\infty}(\varepsilon',\mu',\phi')}+e^{\frac{\mp\hbar\omega_{p}}{2K_{B}T_{L}}}|\mathcal{E}|]\}^{2}e^{H}s(w)d\vec{w}d\vec{x},
\]
but since 
\[
K_{1}=K_{-1}=Kn_{q}e^{\frac{\hbar\omega_{p}}{2K_{B}T_{L}}},
\quad
n_{q}=(e^{\hbar\omega_{p}/K_{B}T_{L}}-1)^{-1},
\]
\[
K_{\pm1}=K\frac{e^{\frac{\hbar\omega_{p}}{2K_{B}T_{L}}}}{e^{\hbar\omega_{p}/K_{B}T_{L}}-1}=\frac{K}{e^{\frac{\hbar\omega_{p}}{2K_{B}T_{L}}}-e^{\frac{-\hbar\omega_{p}}{2K_{B}T_{L}}}}=\frac{K/2}{\sinh(\frac{\hbar\omega_{p}}{2K_{B}T_{L}})},
\]
then 
\[
||Q(\mathcal{E})||^{2}{}_{L^{2}(\mathcal{T}_{h})}\leq
(4\pi)^{2}\sum_{ikm}\int_{\Omega_{C}}\{K_{0}[J\chi](\varepsilon)[||\mathcal{E}||_{L^{\infty}(\varepsilon',\mu',\phi')}+|\mathcal{E}|]+
\]
\[
+K\sum_{\pm}[J\chi](\varepsilon\pm\hbar\omega_{p})[\frac{e^{\frac{\pm\hbar\omega_{p}}{2K_{B}T_{L}}}||\mathcal{E}||_{L^{\infty}(\varepsilon',\mu',\phi')}}{e^{\frac{\hbar\omega_{p}}{2K_{B}T_{L}}}-e^{\frac{-\hbar\omega_{p}}{2K_{B}T_{L}}}}+\frac{e^{\frac{\mp\hbar\omega_{p}}{2K_{B}T_{L}}}|\mathcal{E}|}{e^{\frac{\hbar\omega_{p}}{2K_{B}T_{L}}}-e^{\frac{-\hbar\omega_{p}}{2K_{B}T_{L}}}}]\}^{2}e^{H}s(w)d\vec{w}d\vec{x}=
\]
\[
(4\pi)^{2}\sum_{ikm}\int_{\Omega_{C}}\{K_{0}[J\chi](\varepsilon)[||\mathcal{E}||_{L^{\infty}(\varepsilon',\mu',\phi')}+|\mathcal{E}|]+
\]
\[
+K\sum_{\pm}[J\chi](\varepsilon\pm\hbar\omega_{p})[\frac{\pm||\mathcal{E}||_{L^{\infty}(\varepsilon',\mu',\phi')}}{1-e^{\frac{-\hbar\omega_{p}}{K_{B}T_{L}}}}\pm n_{q}|\mathcal{E}|]\}^{2}e^{H}s(w)d\vec{w}d\vec{x}=
\]
\[
(4\pi)^{2}\sum_{ikm}\int_{\Omega_{C}}\{K_{0}[J\chi](\varepsilon)[||\mathcal{E}||_{L^{\infty}(\varepsilon',\mu',\phi')}+
\]
\[
+|\mathcal{E}|]+K\sum_{\pm}[J\chi](\varepsilon\pm\hbar\omega_{p})[\pm e^{\frac{\hbar\omega_{p}}{K_{B}T_{L}}}n_{q}||\mathcal{E}||_{L^{\infty}(\varepsilon',\mu',\phi')}\pm n_{q}|\mathcal{E}|]\}^{2}e^{H}s(w)d\vec{w}d\vec{x}=
\]
\[
(4\pi)^{2}\sum_{ikm}\int_{\Omega_{C}}\{K_{0}[J\chi](\varepsilon)[||\mathcal{E}||_{L^{\infty}(\varepsilon',\mu',\phi')}+|\mathcal{E}|]+
\]
\[
+Kn_{q}\sum_{\pm}\pm[J\chi](\varepsilon\pm\hbar\omega_{p})[e^{\frac{\hbar\omega_{p}}{K_{B}T_{L}}}||\mathcal{E}||_{L^{\infty}(\varepsilon',\mu',\phi')}+|\mathcal{E}|]\}^{2}e^{H}s(w)d\vec{w}d\vec{x}=
\]
\[
(4\pi)^{2}\sum_{ikm}\int_{\Omega_{C}}\{K_{0}[J\chi](\varepsilon)[||\mathcal{E}||_{L^{\infty}(\varepsilon',\mu',\phi')}+|\mathcal{E}|]
\]
\[
+Kn_{q}([J\chi](\varepsilon+\hbar\omega_{p})-[J\chi](\varepsilon-\hbar\omega_{p}))[e^{\frac{\hbar\omega_{p}}{K_{B}T_{L}}}||\mathcal{E}||_{L^{\infty}(\varepsilon',\mu',\phi')}+|\mathcal{E}|]\}^{2}e^{H}s(w)d\vec{w}d\vec{x},
\]
 and using 
\[
K_{1}=K_{-1}=Kn_{q}e^{\frac{\hbar\omega_{p}}{2K_{B}T_{L}}},
\]
 then 
\[
||Q(\mathcal{E})||^{2}{}_{L^{2}(\mathcal{T}_{h})}\leq
(4\pi)^{2}\sum_{ikm}\int_{\Omega_{C}}\{K_{0}[J\chi](\varepsilon)[||\mathcal{E}||_{L^{\infty}(\varepsilon',\mu',\phi')}+|\mathcal{E}|]+
\]
\[
K_{1}e^{\frac{-\hbar\omega_{p}}{2K_{B}T_{L}}}([J\chi](\varepsilon+\hbar\omega_{p})-[J\chi](\varepsilon-\hbar\omega_{p}))[e^{\frac{\hbar\omega_{p}}{K_{B}T_{L}}}||\mathcal{E}||_{L^{\infty}(\varepsilon',\mu',\phi')}+|\mathcal{E}|]\}^{2}e^{H}s(w)d\vec{w}d\vec{x}=
\]
\[
(4\pi)^{2}\sum_{ikm}\int_{\Omega_{C}}\{K_{0}[J\chi](\varepsilon)[||\mathcal{E}||_{L^{\infty}(\varepsilon',\mu',\phi')}+|\mathcal{E}|]+
\]
\[
K_{1}([J\chi](\varepsilon+\hbar\omega_{p})-[J\chi](\varepsilon-\hbar\omega_{p}))[e^{\frac{\hbar\omega_{p}}{2K_{B}T_{L}}}||\mathcal{E}||_{L^{\infty}(\varepsilon',\mu',\phi')}+e^{\frac{-\hbar\omega_{p}}{2K_{B}T_{L}}}|\mathcal{E}|]\}^{2}e^{H}s(w)d\vec{w}d\vec{x}=
\]
\[
(4\pi)^{2}(\sum_{ikm}\int_{\Omega_{C}}\{K_{0}[J\chi](\varepsilon)[||\mathcal{E}||_{L^{\infty}(\varepsilon',\mu',\phi')}+|\mathcal{E}|]\}^{2}e^{H}s(w)d\vec{w}d\vec{x}+
\sum_{ikm}\int_{\Omega_{C}} e^{H}s(w)d\vec{w}d\vec{x} \times
\]
\[
\{K_{1}([J\chi](\varepsilon+\hbar\omega_{p})-[J\chi](\varepsilon-\hbar\omega_{p}))[e^{\frac{\hbar\omega_{p}}{2K_{B}T_{L}}}||\mathcal{E}||_{L^{\infty}(\varepsilon',\mu',\phi')}+e^{\frac{-\hbar\omega_{p}}{2K_{B}T_{L}}}|\mathcal{E}|]\}^{2}
\]
\[
+
\sum_{ikm}\int_{\Omega_{C}}2K_{0}K_{1}[J\chi](\varepsilon)[||\mathcal{E}||_{L^{\infty}(\varepsilon',\mu',\phi')}+
\]
\[
|\mathcal{E}|]([J\chi](\varepsilon+\hbar\omega_{p})-[J\chi](\varepsilon-\hbar\omega_{p}))[e^{\frac{\hbar\omega_{p}}{2K_{B}T_{L}}}||\mathcal{E}||_{L^{\infty}(\varepsilon',\mu',\phi')}+e^{\frac{-\hbar\omega_{p}}{2K_{B}T_{L}}}|\mathcal{E}|]e^{H}s(w)d\vec{w}d\vec{x})=
\]
\[
16\pi^{2}(\sum_{ikm}\int_{\Omega_{C}}K_{0}^{2}[J\chi]^{2}(\varepsilon)[||\mathcal{E}||_{L^{\infty}(\varepsilon',\mu',\phi')}+|\mathcal{E}|]{}^{2}e^{H}s(w)d\vec{w}d\vec{x}+
\sum_{ikm}\int_{\Omega_{C}}K_{1}^{2}
e^{H}s(w)d\vec{w}d\vec{x}
\]
\[
([J\chi](\varepsilon+\hbar\omega_{p})-[J\chi](\varepsilon-\hbar\omega_{p}))^{2}[e^{\frac{\hbar\omega_{p}}{2K_{B}T_{L}}}||\mathcal{E}||_{L^{\infty}(\varepsilon',\mu',\phi')}+e^{\frac{-\hbar\omega_{p}}{2K_{B}T_{L}}}|\mathcal{E}|]{}^{2}
\]
\[
+\sum_{ikm}\int_{\Omega_{C}}2K_{0}K_{1}[J\chi](\varepsilon)([J\chi](\varepsilon+\hbar\omega_{p})-[J\chi](\varepsilon-\hbar\omega_{p}))[||\mathcal{E}||_{L^{\infty}(\varepsilon',\mu',\phi')}+
\]
\[
|\mathcal{E}|][e^{\frac{\hbar\omega_{p}}{2K_{B}T_{L}}}||\mathcal{E}||_{L^{\infty}(\varepsilon',\mu',\phi')}+e^{\frac{-\hbar\omega_{p}}{2K_{B}T_{L}}}|\mathcal{E}|]e^{H}s(w)d\vec{w}d\vec{x})=
(4\pi)^{2} \times 
\]
\[
(\sum_{ikm}\int_{\Omega_{C}}K_{0}^{2}[J\chi]^{2}(\varepsilon)[||\mathcal{E}||_{L^{\infty}(\varepsilon',\mu',\phi')}^{2}+|\mathcal{E}|^{2}+2||\mathcal{E}||_{L^{\infty}(\varepsilon',\mu',\phi')}|\mathcal{E}|]e^{H}s(w)d\vec{w}d\vec{x}+ 
\]
\[
K_{1}^{2}
\sum_{ikm}\int_{\Omega_{C}}
(J\chi|^{\varepsilon+\hbar\omega_{p}}_{\varepsilon-\hbar\omega_{p}})^{2}
[e^{\frac{\hbar\omega_{p}}{2K_{B}T_{L}}}||\mathcal{E}||_{L^{\infty}(\varepsilon',\mu',\phi')}
+
e^{\frac{-\hbar\omega_{p}}{2K_{B}T_{L}}}|\mathcal{E}
]^2 
e^{H}s(w)d\vec{w}d\vec{x}
\]
\[
+\sum_{ikm}\int_{\Omega_{C}}2K_{0}K_{1}[J\chi](\varepsilon)([J\chi](\varepsilon+\hbar\omega_{p})-[J\chi](\varepsilon-\hbar\omega_{p}))[e^{\frac{\hbar\omega_{p}}{2K_{B}T_{L}}}||\mathcal{E}||_{L^{\infty}(\varepsilon',\mu',\phi')}^{2}
\]
\[
+e^{\frac{-\hbar\omega_{p}}{2K_{B}T_{L}}}|\mathcal{E}|^{2}+(e^{\frac{\hbar\omega_{p}}{2K_{B}T_{L}}}+e^{\frac{-\hbar\omega_{p}}{2K_{B}T_{L}}})||\mathcal{E}||_{L^{\infty}(\varepsilon',\mu',\phi')}|\mathcal{E}|]e^{H}s(w)d\vec{w}d\vec{x})=
\]
\[
(4\pi)^{2}(\sum_{ikm}\int_{\Omega_{C}}\{K_{0}^{2}[J\chi]^{2}(\varepsilon)+K_{1}^{2}([J\chi](\varepsilon+\hbar\omega_{p})-[J\chi](\varepsilon-\hbar\omega_{p}))^{2}e^{\frac{\hbar\omega_{p}}{K_{B}T_{L}}}
\]
\[
+2K_{0}K_{1}[J\chi](\varepsilon)([J\chi](\varepsilon+\hbar\omega_{p})-[J\chi](\varepsilon-\hbar\omega_{p}))e^{\frac{\hbar\omega_{p}}{2K_{B}T_{L}}}\}||\mathcal{E}||_{L^{\infty}(\varepsilon',\mu',\phi')}^{2}e^{H}s(w)d\vec{w}d\vec{x}
\]
\[
\sum_{ikm}\int_{\Omega_{C}}\{K_{0}^{2}[J\chi]^{2}(\varepsilon)+K_{1}^{2}([J\chi](\varepsilon+\hbar\omega_{p})-[J\chi](\varepsilon-\hbar\omega_{p}))^{2}e^{\frac{-\hbar\omega_{p}}{K_{B}T_{L}}}+
\]
\[
2K_{0}K_{1}[J\chi](\varepsilon)([J\chi](\varepsilon+\hbar\omega_{p})-[J\chi](\varepsilon-\hbar\omega_{p}))e^{\frac{-\hbar\omega_{p}}{2K_{B}T_{L}}}\}|\mathcal{E}|^{2}e^{H}s(w)d\vec{w}d\vec{x}+
\]
\[
2\sum_{ikm}\int_{\Omega_{C}}\{K_{0}^{2}[J\chi]^{2}(\varepsilon)+K_{1}^{2}([J\chi](\varepsilon+\hbar\omega_{p})-[J\chi](\varepsilon-\hbar\omega_{p}))^{2}
+
\]
\[
K_{0}K_{1}[J\chi](\varepsilon)
([J\chi]^{\varepsilon+\hbar\omega_{p}}_{\varepsilon-\hbar\omega_{p}})
(e^{\frac{\hbar\omega_{p}}{2K_{B}T_{L}}}+e^{\frac{-\hbar\omega_{p}}{2K_{B}T_{L}}})\}||\mathcal{E}||_{L^{\infty}(\varepsilon',\mu',\phi')}|\mathcal{E}|e^{H}s(w)d\vec{w}d\vec{x})
=
\]
\[
(4\pi)^{2} 
(\sum_{ikm}\int_{\Omega_{C}}\{K_{0}[J\chi](\varepsilon)+K_{1}
([J\chi]^{\varepsilon+\hbar\omega_{p}}_{\varepsilon-\hbar\omega_{p}})e^{\frac{\hbar\omega_{p}}{2K_{B}T_{L}}}\}^{2}||\mathcal{E}||_{L^{\infty}(\varepsilon',\mu',\phi')}^{2}e^{H}s(w)d\vec{w}d\vec{x}
\]
\[
+
\sum_{ikm}\int_{\Omega_{C}}\{K_{0}[J\chi](\varepsilon)+K_{1}([J\chi](\varepsilon+\hbar\omega_{p})-[J\chi](\varepsilon-\hbar\omega_{p}))e^{\frac{-\hbar\omega_{p}}{2K_{B}T_{L}}}\}^{2}|\mathcal{E}|^{2}e^{H}s(w)d\vec{w}d\vec{x}+
\]
\[
2\sum_{ikm}\int_{\Omega_{C}}\{[K_{0}[J\chi](\varepsilon)+K_{1}
([J\chi]^{\varepsilon+\hbar\omega_{p}}_{\varepsilon-\hbar\omega_{p}})]^{2}+2K_{0}K_{1}[J\chi](\varepsilon)([J\chi](\varepsilon+\hbar\omega_{p})
\]
\[
-[J\chi](\varepsilon-\hbar\omega_{p}))(\cosh(\frac{\hbar\omega_{p}}{2K_{B}T_{L}})-1)\}||\mathcal{E}||_{L^{\infty}(\vec{\varepsilon})}|\mathcal{E}|e^{H}s(w)d\vec{w}d\vec{x}).
\]
Now we can try to bound as
\[
||Q(\mathcal{E})||^{2}{}_{L^{2}(\mathcal{T}_{h})}\leq
2\max\{[K_{0}[J\chi](\varepsilon)+K_{1}([J\chi](\varepsilon+\hbar\omega_{p})-[J\chi](\varepsilon-\hbar\omega_{p}))]^{2}+
(4\pi)^{2} \times 
\]
\[
([\max\{K_{0}[J\chi](\varepsilon)+K_{1}([J\chi]^{\varepsilon+\hbar\omega_{p}}_{\varepsilon-\hbar\omega_{p}})e^{\frac{\hbar\omega_{p}}{2K_{B}T_{L}}}\}^{2}]\sum_{ikm}\int_{\Omega_{C}}||\mathcal{E}||_{L^{\infty}(\varepsilon',\mu',\phi')}^{2}e^{H}s(w)d\vec{w}d\vec{x}
\]
\[
[\max\{K_{0}[J\chi](\varepsilon)+K_{1}
([J\chi]^{\varepsilon+\hbar\omega_{p}}_{\varepsilon-\hbar\omega_{p}})e^{\frac{-\hbar\omega_{p}}{2K_{B}T_{L}}}\}^{2}]\sum_{ikm}\int_{\Omega_{C}}|\mathcal{E}|^{2}e^{H}s(w)d\vec{w}d\vec{x}+
\]
\[
2K_{0}K_{1}[J\chi](\varepsilon)([J\chi]^{\varepsilon+\hbar\omega_{p}}_{\varepsilon-\hbar\omega_{p}})(\cosh(\frac{\hbar\omega_{p}}{2K_{B}T_{L}})-1)\}\sum_{ikm}\int_{\Omega_{C}}||\mathcal{E}||_{L^{\infty}(\vec{\varepsilon})}|\mathcal{E}|e^{H}s(w)d\vec{w}d\vec{x}).
\]
If we define each one of the maxima above as 
 $M_{i},\, i\in\{0,1,2\}$
 respectively, we have 
\[
||Q(\mathcal{E})||^{2}{}_{L^{2}(\mathcal{T}_{h})}\leq
(4\pi)^{2}([M_{0}]^{2}\sum_{ikm}\int_{\Omega_{C}}||\mathcal{E}||_{L^{\infty}(\varepsilon',\mu',\phi')}^{2}e^{H}s(w)d\vec{w}d\vec{x}+
\]
\[
[M_{1}]^{2}\sum_{ikm}\int_{\Omega_{C}}|\mathcal{E}|^{2}e^{H}s(w)d\vec{w}d\vec{x}+
2\times M_{2}\sum_{ikm}\int_{\Omega_{C}}||\mathcal{E}||_{L^{\infty}(\vec{\varepsilon})}|\mathcal{E}|e^{H}s(w)d\vec{w}d\vec{x})
\]
\[
=(4\pi)^{2}(M_{0}^{2}\sum_{ikm}\int_{\Omega_{C}}||\mathcal{E}||_{L^{\infty}(\varepsilon',\mu',\phi')}^{2}e^{H}s(w)d\vec{w}d\vec{x}+
\]
\[
M_{1}^{2}\sum_{ikm}\int_{\Omega_{C}}|\mathcal{E}|^{2}e^{H}s(w)d\vec{w}d\vec{x}+M_{2}\sum_{ikm}\int_{\Omega_{C}}||\mathcal{E}||_{L^{\infty}(\vec{\varepsilon})}|\mathcal{E}|e^{H}s(w)d\vec{w}d\vec{x})
\]
\[
=16\pi{}^{2}(M_{0}^{2}\sum_{ikm}\int_{\Omega_{C}}||\mathcal{E}||_{L^{\infty}(\varepsilon',\mu',\phi')}^{2}e^{H}s(w)d\vec{w}d\vec{x}+
\]
\[
M_{1}^{2}\sum_{ikm}\int_{\Omega_{C}}|\mathcal{E}|^{2}e^{H}s(w)d\vec{w}d\vec{x}+M_{2}\sum_{ikm}\int_{\Omega_{C}}||\mathcal{E}||_{L^{\infty}(\vec{\varepsilon})}|\mathcal{E}|e^{H}s(w)d\vec{w}d\vec{x})
\]
\[
=16\pi{}^{2}[M_{0}^{2}||\{||\mathcal{E}||_{L^{\infty}(\vec{w}')}\}||_{L^{2}(\vec{x},\vec{w})}^{2}+M_{1}^{2}||\mathcal{E}||_{L^{2}(\vec{x},\vec{w})}^{2}+M_{2}(||\mathcal{E}||_{L^{\infty}(\vec{\varepsilon})},|\mathcal{E}|)_{L^{2}(\vec{x},\vec{w})}].
\]
If we combine the estimates  of 
$T_{i},\, i=1,2,3$ for our final estimate, we have that 
\[
(\partial_{t}E_{h},E_{h})_{\mathcal{T}_{h}}+\frac{1}{4}\int_{e_{h}}(E_{h}^{+}-E_{h}^{-})^{2}|\beta\cdot\widehat{n}|e^{H}d\sigma\leq-\mathcal{A}(\mathbb{\mathcal{E}},E_{h}),
\]
 with 
\[
\mathcal{-A}(\mathcal{E},E_{h})\leq|\mathcal{A}(\mathcal{E},E_{h})|\leq|T_{1}|+|T_{2}|+|T_{3}|,
\]
 where 
\[
|T_{1}|\leq Ch^{k+1}|f|_{H_{(\mathcal{T}_{h})}^{k+1}}||E_{h}||_{L_{\mathcal{T}_{h}}^{2}}|\beta|_{W^{\{1,\infty\}}(\mathcal{T}_{h})},
\]
\[
|T_{2}|\leq\frac{1}{8}||[e^{H}E_{h}]\sqrt{|\widehat{\mathcal{\beta}}\cdot\widehat{n}|}\,||^{2}{}_{L^{2}(e_{h})}+C||\beta||_{L_{\mathcal{T}_{h}}^{\infty}}h^{2k+1}|f|_{H^{k+1}(\mathcal{T}_{h})}^{2},
\]
\[
|T_{3}|=|\sum_{ikm}\int_{\Omega_{C}}Q(\mathcal{E})E_{h}e^{H}s(w)d\vec{w}d\vec{x}|=|(Q(\mathcal{E}),E_{h})_{\mathcal{T}_{h}}|\leq||Q(\mathcal{E})||{}_{L^{2}(\mathcal{T}_{h})}\cdot||E_{h}||_{L^{2}(\mathcal{T}_{h})},
\]
\[
||Q(\mathcal{E})||^{2}{}_{L^{2}(\mathcal{T}_{h})}\leq
\]
\[
16\pi{}^{2}[M_{0}^{2}||\{||\mathcal{E}||_{L^{\infty}(\vec{w}')}\}||_{L^{2}(\vec{x},\vec{w})}^{2}+M_{1}^{2}||\mathcal{E}||_{L^{2}(\vec{x},\vec{w})}^{2}+M_{2}(||\mathcal{E}||_{L^{\infty}(\vec{\varepsilon})},|\mathcal{E}|)_{L^{2}(\vec{x},\vec{w})}],
\]
 so we have 
\[
(\partial_{t}E_{h},E_{h})_{\mathcal{T}_{h}}+\frac{\int_{e_{h}}(E_{h}^{+}-E_{h}^{-})^{2}|\beta\cdot\widehat{n}|e^{H}d\sigma}{4}
\leq
\sum_{i=1}^3 |T_{i}|
\leq
\frac{||[e^{H}E_{h}]\sqrt{|\widehat{\mathcal{\beta}}\cdot\widehat{n}|}\,||^{2}{}_{L^{2}(e_{h})}}{8} +
\]
\[
Ch^{k+1}|f|_{H_{(\mathcal{T}_{h})}^{k+1}}||E_{h}||_{L_{\mathcal{T}_{h}}^{2}}|\beta|_{W^{\{1,\infty\}}(\mathcal{T}_{h})}
+C||\beta||_{L_{\mathcal{T}_{h}}^{\infty}}h^{2k+1}|f|_{H^{k+1}(\mathcal{T}_{h})}^{2}
+||E_{h}||_{L^{2}(\mathcal{T}_{h})}\times
\]
\[
4\pi M\sqrt{||\{||\mathcal{E}||_{L^{\infty}(\vec{w}')}\}||_{L^{2}(\vec{x},\vec{w})}^{2}+||\mathcal{E}||_{L^{2}(\vec{x},\vec{w})}^{2}+||\{||\mathcal{E}||_{L^{\infty}(\vec{\varepsilon})}\}||_{L^{2}(\vec{x},\vec{w})}\times||\mathcal{E}||{}_{L^{2}(\vec{x},\vec{w})}},
\]
\[
\mathrm{with}\quad 
M=\max\{M_{0},M_{1},\sqrt{M_{2}}\}.
\]
We can get the following bound then, 
\[
(\partial_{t}E_{h},E_{h})_{\mathcal{T}_{h}}+\frac{1}{4}\int_{e_{h}}(E_{h}^{+}-E_{h}^{-})^{2}|\beta\cdot\widehat{n}|e^{H}d\sigma\leq
Ch^{k+1}|f|_{H_{(\mathcal{T}_{h})}^{k+1}}||E_{h}||_{L_{\mathcal{T}_{h}}^{2}}|\beta|_{W^{\{1,\infty\}}(\mathcal{T}_{h})}
\]
\[
+\frac{1}{8}||[e^{H}E_{h}]\sqrt{|\widehat{\mathcal{\beta}}\cdot\widehat{n}|}\,||^{2}{}_{L^{2}(e_{h})}+C||\beta||_{L_{\mathcal{T}_{h}}^{\infty}}h^{2k+1}|f|_{H^{k+1}(\mathcal{T}_{h})}^{2}
+4\pi M||E_{h}||_{L^{2}(\mathcal{T}_{h})} \times 
\]
\[
\sqrt{||\{||\mathcal{E}||_{L^{\infty}(\vec{w}')}\}||_{L^{2}(\vec{x},\vec{w})}^{2}+||\mathcal{E}||_{L^{2}(\vec{x},\vec{w})}^{2}+2||\{||\mathcal{E}||_{L^{\infty}(\vec{\varepsilon})}\}||_{L^{2}(\vec{x},\vec{w})}\times||\mathcal{E}||{}_{L^{2}(\vec{x},\vec{w})}}
=
\]
\[
Ch^{k+1}|f|_{H_{(\mathcal{T}_{h})}^{k+1}}||E_{h}||_{L_{\mathcal{T}_{h}}^{2}}|\beta|_{W^{\{1,\infty\}}(\mathcal{T}_{h})}
\]
\[
+\frac{1}{8}||[e^{H}E_{h}]\sqrt{|\widehat{\mathcal{\beta}}\cdot\widehat{n}|}\,||^{2}{}_{L^{2}(e_{h})}+C||\beta||_{L_{\mathcal{T}_{h}}^{\infty}}h^{2k+1}|f|_{H^{k+1}(\mathcal{T}_{h})}^{2}
\]
\[
+4\pi M||E_{h}||_{L^{2}(\mathcal{T}_{h})}[||\{||\mathcal{E}||_{L^{\infty}(\vec{w}')}\}||_{L^{2}(\vec{x},\vec{w})}+||\mathcal{E}||_{L^{2}(\vec{x},\vec{w})}],
\]
which is the result we want.
Now, following similar arguments as in \cite{CGP}, if we assume that 
\[
|\beta|_{W^{\{1,\infty\}}(\mathcal{T}_{h})}\leq C',
\]

%\textbf{
which holds for quadratically confined electrostatic
 potentials \cite{CGP},
% },
 when
\[
\beta=(-\partial_{p}\varepsilon\,\mu,qE\mu,qE(1\!-\!\mu_{m}^{2})|_{\mu_{m-}}^{\mu_{m+}}/p),
\quad 
E=-\partial_{x}V,\quad|V|\leq K|x-x_{0}|^{2},
\]
with $K,\, x_{0}$
constants, we have then 
\[
(\partial_{t}E_{h},E_{h})_{\mathcal{T}_{h}}+\frac{1}{4}\int_{e_{h}}(E_{h}^{+}-E_{h}^{-})^{2}|\beta\cdot\widehat{n}|e^{H}d\sigma\leq
\]
\[
C^{*}h^{k+1}|f|_{H_{(\mathcal{T}_{h})}^{k+1}}||E_{h}||_{L_{\mathcal{T}_{h}}^{2}}
+\frac{1}{8}||[e^{H}E_{h}]\sqrt{|\widehat{\mathcal{\beta}}\cdot\widehat{n}|}\,||^{2}{}_{L^{2}(e_{h})}+C||\beta||_{L_{\mathcal{T}_{h}}^{\infty}}h^{2k+1}|f|_{H^{k+1}(\mathcal{T}_{h})}^{2}
\]
\[
+M'||E_{h}||_{L^{2}(\mathcal{T}_{h})}[||\{||\mathcal{E}||_{L^{\infty}(\vec{w}')}\}||_{L^{2}(\vec{x},\vec{w})}+||\mathcal{E}||_{L^{2}(\vec{x},\vec{w})}],
\]
with 
\ $M'=4\pi M$.
 So 
\[
(\partial_{t}E_{h},E_{h})_{\mathcal{T}_{h}}+\frac{1}{4}\int_{e_{h}}(E_{h}^{+}-E_{h}^{-})^{2}|\beta\cdot\widehat{n}|e^{H}d\sigma-\frac{1}{8}||[e^{H}E_{h}]\sqrt{|\widehat{\mathcal{\beta}}\cdot\widehat{n}|}\,||^{2}{}_{L^{2}(e_{h})}\leq
\]
\[
+C^{*}h^{k+1}|f|_{H_{(\mathcal{T}_{h})}^{k+1}}||E_{h}||_{L_{\mathcal{T}_{h}}^{2}}+C||\beta||_{L_{\mathcal{T}_{h}}^{\infty}}h^{2k+1}|f|_{H^{k+1}(\mathcal{T}_{h})}^{2}
\]
\[
+M'||E_{h}||_{L^{2}(\mathcal{T}_{h})}[||\{||\mathcal{E}||_{L^{\infty}(\vec{w}')}\}||_{L^{2}(\vec{x},\vec{w})}+||\mathcal{E}||_{L^{2}(\vec{x},\vec{w})}].
\]
We have  then that
\[
(\partial_{t}E_{h},E_{h})_{\mathcal{T}_{h}}\leq(\partial_{t}E_{h},E_{h})_{\mathcal{T}_{h}}+\frac{1}{8}||[e^{H}E_{h}]\sqrt{|\widehat{\mathcal{\beta}}\cdot\widehat{n}|}\,||^{2}{}_{L^{2}(e_{h})}\leq
\]
\[
C^{*}h^{k+1}|f|_{H_{(\mathcal{T}_{h})}^{k+1}}||E_{h}||_{L_{\mathcal{T}_{h}}^{2}}+C||\beta||_{L_{\mathcal{T}_{h}}^{\infty}}h^{2k+1}|f|_{H^{k+1}(\mathcal{T}_{h})}^{2}
\]
\[
+M'[||E_{h}||_{L^{2}(\mathcal{T}_{h})}||\{||\mathcal{E}||_{L^{\infty}(\vec{w}')}\}||_{L^{2}(\vec{x},\vec{w})}+||E_{h}||_{L^{2}(\mathcal{T}_{h})}||\mathcal{E}||_{L^{2}(\vec{x},\vec{w})}],
\]
 therefore 
\[
\frac{1}{2}\frac{d}{dt}||E_{h}||_{\mathcal{T}_{h}}^{2}=\frac{1}{2}\frac{d}{dt}(E_{h},E_{h})_{\mathcal{T}_{h}}=(\partial_{t}E_{h},E_{h})_{\mathcal{T}_{h}}\leq
\]
\[
C^{*}h^{k+1}|f|_{H_{(\mathcal{T}_{h})}^{k+1}}||E_{h}||_{L_{\mathcal{T}_{h}}^{2}}+C||\beta||_{L_{\mathcal{T}_{h}}^{\infty}}h^{2k+1}|f|_{H^{k+1}(\mathcal{T}_{h})}^{2}
\]
\[
+M'[||E_{h}||_{L^{2}(\mathcal{T}_{h})}||\{||\mathcal{E}||_{L^{\infty}(\vec{w}')}\}||_{L^{2}(\vec{x},\vec{w})}+||E_{h}||_{L^{2}(\mathcal{T}_{h})}||\mathcal{E}||_{L^{2}(\vec{x},\vec{w})}],
\]
 with 
 $E_{h}=0$
 at 
 $t=0$.
 We only need to proceed with a Gronwall inequality,
\[
\frac{1}{2}\frac{d}{dt}||E_{h}||_{\mathcal{T}_{h}}^{2}\leq C||\beta||_{L_{\mathcal{T}_{h}}^{\infty}}h^{2k+1}|f|_{H^{k+1}(\mathcal{T}_{h})}^{2}+
\]
\[
+||E_{h}||_{L^{2}(\mathcal{T}_{h})}[C^{*}h^{k+1}|f|_{H_{(\mathcal{T}_{h})}^{k+1}}+M'||\{||\mathcal{E}||_{L^{\infty}(\vec{w}')}\}||_{L^{2}(\vec{x},\vec{w})}+M'||\mathcal{E}||_{L^{2}(\vec{x},\vec{w})}].
\]
We now follow a similar argumentation as in \cite{CGP}.
 We claim that 
\[
\frac{1}{2}\frac{d}{dt}||E_{h}||_{\mathcal{T}_{h}}^{2}\leq Ch^{2k+1}|f|_{H^{k+1}(\mathcal{T}_{h})}^{2}+C^{*}h||E_{h}||_{L_{\mathcal{T}_{h}}^{2}}^{2}
\]
\[
+M'\sqrt{||E_{h}||_{L^{2}(\mathcal{T}_{h})}^{2}}[||\{||\mathcal{E}||_{L^{\infty}(\vec{w}')}\}||_{L^{2}(\vec{x},\vec{w})}+||\mathcal{E}||_{L^{2}(\vec{x},\vec{w})}].
\]
Due to the first two terms, we expect a contribution related to them
 similar to what happens in \cite{CGP}.
Our initial condition is 
$E_{h}|_{t=0}=0$. We claim 
\[
||E_{h}||_{\mathcal{T}_{h}}\leq C\sqrt{t}\exp(Cht)h^{k+1/2}|f|_{L^{\infty}([0,t],H^{k+1}(\Omega_{D}))}
\]
\[
+t\times M'[||\{||\mathcal{E}||_{L^{\infty}(\vec{w}')}\}||_{L^{2}(\vec{x},\vec{w})}+||\mathcal{E}||_{L^{2}(\vec{x},\vec{w})}].
\]
In conclusion, similarly to \cite{CGP}, the terms above will have a square root times
 exponential behavior as expected. However, most importantly, the terms above
 not appearing in \cite{CGP}
 (not similar to the ones appearing in \cite{CGP})
 are expected to
 have a bound with linear growth.
 Finally, our intention is to bound the whole error, meaning 
\[
\mathbb{E=\mathcal{E}+}E_{h},
\quad 
\mathrm{with} \quad
\mathcal{E}=f-\mathbb{P}f
\quad
\mathrm{and}
\quad
E_{h}=\mathbb{P}f-f_{h}.
\]
%but so far we've only provided a bound for $E_{h}$.
 Then 
\[
||\mathbb{E}_{||_{\mathcal{T}_{h}}}\leq{||}\mathcal{E}||_{\mathcal{T}_{h}} + ||E_{h}||_{\mathcal{T}_{h}}
\]
and we expect the projection error to contribute as below, 
\[
||\mathcal{E}||_{\mathcal{T}_{h}}\leq C'h^{k+1}.
\]
 Therefore we have 
\[
||\mathbb{E}||_{\mathcal{T}_{h}}\leq||\mathcal{E}||_{\mathcal{T}_{h}}+||E_{h}||_{\mathcal{T}_{h}}
\]
\[
\leq C\sqrt{t}\exp(Cht)h^{k+1/2}|f|_{L^{\infty}([0,t],H^{k+1}(\Omega_{D}))}
\]
\[
+t\times M'[||\{||\mathcal{E}||_{L^{\infty}(\vec{w}')}\}||_{L^{2}(\vec{x},\vec{w})}+||\mathcal{E}||_{L^{2}(\vec{x},\vec{w})}]+C'h^{k+1},
\]
 and if we simply take 
\[
\bar{C}=\max\{C,M',C'\},\quad \mathrm{then}
\]
\[
||\mathbb{E}||_{\mathcal{T}_{h}}\leq
\bar{C} \times 
\]
\[
\{\sqrt{t}e^{\bar{C}ht}h^{k+1/2}|f|_{L^{\infty}([0,t],H^{k+1}(\Omega_{D}))}+t[||\{||\mathcal{E}||_{L^{\infty}(\vec{w}')}\}||_{L^{2}(\vec{x},\vec{w})}+||\mathcal{E}||_{L^{2}(\vec{x},\vec{w})}]+h^{k+1}\}.
\]
 If we use the regularity property we get
\[
||\mathcal{M}^{-1/2}[f(t,\cdot,\cdot)-\mathcal{M}]||_{H^{k}}\leq K\exp(-\gamma t),\quad t\geq0\implies
\]
\[
||f(t=0)||_{H^{k+1}}\leq K\implies||f(t)||_{H^{k+1}}\leq K.
\]
Going back to our result, if 
 $t\geq Ch$, then we have (after a few time-steps)
\[
||\mathbb{E}||_{\mathcal{T}_{h}}\leq
\]
\[
\bar{C}\{\sqrt{t}\exp(\bar{C}ht)h^{k+1/2}|f|_{L^{\infty}([0,t],H^{k+1}(\Omega_{D}))}+t[||\{||\mathcal{E}||_{L^{\infty}(\vec{w}')}\}||_{L^{2}(\vec{x},\vec{w})}+||\mathcal{E}||_{L^{2}(\vec{x},\vec{w})}]\}.
\]

 This is interesting because we have square-root times exponential growth
 versus linear growth.
 It would seem the first one will win the latter for larger times, after
 a few time steps again, and then we should have the 
 $L^{2}$
 error estimates of the DG solution,
\[
||\mathbb{E}||_{\mathcal{T}_{h}}\leq\bar{C}\sqrt{t}\exp(\bar{C}ht)h^{k+1/2}|f|_{L^{\infty}([0,t],H^{k+1}(\Omega_{D}))}.
\]

So although we get an extra term (not similar to any term reported in \cite{CGP}), the terms analog to the ones appearing in \cite{CGP} become more meaningful at long times,
 and since the other one is negligible for large times, we get a similar estimate as in \cite{CGP}.
 Therefore, since 
 $\mathbb{E=}f-f_{h}$,
 we have proved that for large times (i.e. after some transient time), 
\[
||f-f_{h}||_{\mathcal{T}_{h}}\leq\bar{C}\sqrt{t}\exp(\bar{C}ht)h^{k+1/2}|f|_{L^{\infty}([0,t],H^{k+1}(\Omega_{D}))}.
\]

\end{proof}

We now present our second result regarding error estimation for our DG scheme with curvilinear momentum coordinates, at the semi-discrete stage. 

\begin{theorem}
If 
$f_{h}$
 is the semidiscrete DG solution to our Boltzmann equation with a linear
 collision operator in the semiconductor problem, then 
\[
||f_{h}(t,\cdot,\cdot)-M||\leq C\sqrt{t}\exp(Cht)h^{k+1/2}|f|_{L^{\infty}([0,t],H^{k+1}(\Omega_{D}))}+3e^{-\lambda t}||f_{0}-M||_{B^{2}(\mathbb{R}^{d})}
\]
with the constant 
$C=C(diam(\Omega_{D}),||\beta||_{W^{\{1,\infty\}}(\Omega_{D})}).$
\end{theorem}
\begin{proof} 
 We have by triangle inequality 
\[
||f_{h}-M||_{L^{2}(\Omega_{D})}=||f_{h}-f||_{L^{2}(\Omega_{D})}+||f-M||_{L^{2}(\Omega_{D})}\leq
%\]
%\[
||f_{h}-f||_{L^{2}(\Omega_{D})}+||f-M||_{B^{2}(\Omega_{D})}.
\]
Since by definition 
\[
||f-M||_{B^{2}(\Omega_{D})}^{2}=||(f-M)M^{-1}||_{L^{2}(\Omega_{D})}^{2}=||(f-M)\exp(\varepsilon/K_{B}T)||_{L^{2}(\Omega_{D})}^{2},
\]
 and 
 $\varepsilon\geq0\implies\exp(\varepsilon/K_{B}T)\geq1$, so  
\[
||f-M||_{B^{2}(\Omega_{D})}^{2}=||(f-M)\exp(\varepsilon/K_{B}T)||_{L^{2}(\Omega_{D})}^{2}\geq||(f-M)||_{L^{2}(\Omega_{D})}^{2}.
\]
We have then 
\[
||f_{h}-M||_{L^{2}(\Omega_{D})}=||f_{h}-f||_{L^{2}(\Omega_{D})}+||f-M||_{L^{2}(\Omega_{D})}\leq||f_{h}-f||_{L^{2}(\Omega_{D})}+||f-M||_{B^{2}(\Omega_{D})}
\]
\[
\leq
\bar{C}\sqrt{t}\exp(\bar{C}ht)h^{k+1/2}|f|_{L^{\infty}([0,t],H^{k+1}(\Omega_{D}))}+||f-M||_{B^{2}(\Omega_{D})},
\]
and we only need to justify that 
\[
||f-M||_{B^{2}(\Omega_{D})}\leq3\exp(-\lambda t)||f_{0}-M||_{B^{2}(\mathbb{R}^{d})},
\]
 with 
 $f_{0}$
 the initial condition.
 This is simply a consequence of the work in \cite{Herau}
 (as referenced in \cite{CGP}).
 So both inequalities hold and we get the desired result.
 Therefore, for large times (again, after some transient time), 
\[
||f_{h}(t,\cdot,\cdot)-M||\leq C\sqrt{t}\exp(Cht)h^{k+1/2}|f|_{L^{\infty}([0,t],H^{k+1}(\Omega_{D}))}
+3e^{-\lambda t}||f_{0}-M||_{B^{2}(\mathbb{R}^{d})}
\]
 as expected.
\end{proof} 

\section*{Appendix 2}
\section{Positivity Preservation in DG Scheme for BP}
\subsection{1Dx-2Dp problem: Preliminaries}
Using the notation in \cite{EECHXM-JCP}, the semi-discrete DG formulation is written in the form below. \\

Find $f_h \, \in \, V_h^k$ such that $\forall \, g_h \, \in V_h^k$ 
and $\, \forall \, \Omega_{ikm} $
\begin{align}
\int_{\Omega_{ikm}} Q(f_h) g_h \, p^2 dp d\mu dx  &=
 \partial_t  \int_{\Omega_{ikm}}  f_h \, g_h \, p^2 dp d\mu dx 
\nonumber\\
-\int_{\Omega_{ikm}} H^{(x)}  \, f_h \, \partial_x g_h \, p^2 dp d\mu dx \,
&\pm 
 \int_{\tilde{\Omega}^{(x)}_{km}}  \,  \widehat{H^{(x)}f_h}|_{x_{i\pm}} \, g_h|_{x_{i\pm}}^{\mp} \, p^2 dp d\mu  
%- \int_{km} \partial_p \varepsilon \widehat{ f  \mu }|_{x_{i-}} \, g \, p^2 dp d\mu 
\nonumber\\
- \int_{\Omega_{ikm}} { p^2 } \,
H^{(p)}  f_h \, \partial_p g_h \, d\mu dx
& \pm  
p^2_{k^\pm}
\int_{\tilde{\Omega}^{(p)}_{im}}  \,
\widehat{H^{(p)}f_h}|_{p_{k\pm}} \, g_h|_{p_{k\pm}}^{\mp} \, d\mu dx -
%- \int_{im}
%{ (p^2 f \mu) }|_{p_{k-}} q\partial_x \Phi(x,t) g \, d\mu dx
\nonumber\\
 \int_{\Omega_{ikm}} (1-\mu^2) H^{(\mu)} f_h   \partial_{\mu} g_h  p  dp d\mu dx 
&\pm 
(1-\mu_{m\pm}^2)
\int_{\tilde{\Omega}^{(\mu)}_{ik}} 
\widehat{H^{(\mu)}f_h}|_{\mu_{m\pm}}  g_h|_{\mu_{m\pm}}^{\mp} p  dp dx,
%- \int_{ik}
%(1-\mu_{m-}^2)
%q \hat{ (f\partial_x \Phi) }|_{\mu_m^-}   \, g \, p \, dp dx
\nonumber
\end{align}
where we have defined the following terms (taking in account that $\partial_p \varepsilon(p)>0$), 
\begin{align}
&H^{(x)} = \mu \, \partial_p \varepsilon, \ \ 
 \widehat{H^{(x)}f}|_{x_{i\pm}}
= \partial_p \varepsilon
\widehat{ f_h  \mu }|_{x_{i\pm}};
\tilde{\Omega}^{(x)}_{km} = [r_{k-},r_{k+}]\times[\mu_{m-},\mu_{m+}] 
= \partial_x \Omega_{km},
\nonumber\\
&H^{(p)} = -q E \mu, \ \ 
\widehat{H^{(p)}f}|_{p_{k\pm}}
=
 -q \widehat{ E f_h \mu } |_{p_{k\pm}} 
;
\tilde{\Omega}^{(p)}_{im} = [x_{i-},x_{i+}]\times[\mu_{m-},\mu_{m+}] 
= \partial_p \Omega_{im},
\nonumber\\
&H^{(\mu)} 
=
-qE,\ \ 
\widehat{H^{(\mu)}f}|_{\mu_{m\pm}}
=
-q  \widehat{ E f_h }|_{\mu_{m\pm}};
\tilde{\Omega}^{(\mu)}_{ik} = [x_{i-},x_{i+}]\times[r_{k-},r_{k+}] 
= \partial_{\mu}\Omega_{ik}
.
\nonumber
\end{align}
The weak form of the collisional operator in the DG scheme is, specifically,
\begin{align}
&
\int_{\Omega_{ikm}} Q(f_h) \, g_h \, p^2 \, dp d\mu dx   \, 
 = 
\int_{\Omega_{ikm}} \left[ G(f_h) - \nu(\varepsilon(p)) f_h \right] \, g_h \, p^2 \, dp d\mu dx   \, = 2\pi \times
\nonumber\\
&
\int_{\Omega_{ikm}} 
%\left( 
\sum_{j=-1}^{1} c_j  \chi(\varepsilon(p) + j\hbar\omega)
\int_{-1}^{1} d\mu' \left. 
f_h(x,p(\varepsilon'),\mu')  p^2(\varepsilon') \frac{dp'}{d\varepsilon'} \right|_{\varepsilon(p) + j\hbar\omega }  
%\right) 
g_h p^2  dp d\mu dx  
\nonumber\\
&
-
4\pi 
\int_{\Omega_{ikm}} 
%\left[ 
f_h(x,p,\mu,t)  
%\left(
\sum_{j=-1}^{1} c_j \, \chi(\varepsilon(p) - j\hbar\omega)  \left.  p^2(\varepsilon') \frac{dp'}{d\varepsilon'}  \right|_{\varepsilon(p) - j\hbar\omega } 
%\right)
%\right]
g_h \, p^2 \, dp d\mu dx  \, .
\nonumber
\end{align}
The cell average of $f_h$ in $\Omega_{ikm}$ is
%\vspace{-0.5cm}
\begin{equation}
\bar{f}_{ikm} = 
%\frac
{\int_{\Omega_{ikm}} f_h \,  p^2 \, dp d\mu dx }/{\int_{\Omega_{ikm}} p^2 \, dp d\mu dx} = 
%\frac
{\int_{\Omega_{ikm}} f_h \, dV  }/{ V_{ikm} }, \,
\end{equation}
where, for our particular spherical curvilinear coordinates,
\vspace{-0.5cm}
\begin{equation*}
V_{ikm} = \int_{\Omega_{ikm}} dV , \, 
dV = \tau \prod_{d=1}^3 z_d ,
\,
(z_1,z_2,z_3) =
(x,p,\mu), 
\,
\tau = \sqrt{\gamma\lambda}, \,
\gamma = 1, \, \lambda = p^2 .
\end{equation*}
The time evolution of the cell average in the DG scheme is given by
\begin{align}
&
\partial_t \bar{f}_{ikm} = %\int_{\Omega_{ikm}}  f_h \, p^2 dp d\mu dx 
%\nonumber\\
 -  \frac{1}{V_{ikm}} \left[
 \int_{\partial_x \Omega_{km}}  \,  \widehat{H^{(x)}f_h}|_{x_{i+}} 
 \, p^2 dp d\mu  
-
 \int_{\partial_x \Omega_{km}}  \,  \widehat{H^{(x)}f_h}|_{x_{i-}}  
 \, p^2 dp d\mu  
\right. 
\nonumber\\
& +  
p^2_{k^+}
\int_{\partial_p \Omega_{im}}  \,
\widehat{H^{(p)}f_h}|_{p_{k+}} 
\, d\mu dx
-  
p^2_{k^-}
\int_{\partial_p \Omega_{im}}  \,
\widehat{H^{(p)}f_h}|_{p_{k-}} 
\, d\mu dx
\nonumber\\
&+ 
\left.
(1-\mu_{m+}^2)
\int_{\partial_{\mu}\Omega_{ik}} 
\widehat{H^{(\mu)}f_h}|_{\mu_{m+}}
p dp dx
-
(1-\mu_{m-}^2)
\int_{\partial_{\mu}\Omega_{ik}} 
\widehat{H^{(\mu)}f_h}|_{\mu_{m-}} 
p dp dx
\right]
\nonumber\\
& +\frac{2\pi
\int_{\Omega_{ikm}} 
%\left( 
\sum_{j=-1}^{1} c_j  \chi(\varepsilon(p) + j\hbar\omega)
\int_{-1}^{1} d\mu' \left. 
f_h(x,p(\varepsilon'),\mu')  p^2(\varepsilon') \frac{dp}{d\varepsilon}  \right|_{\varepsilon(p) + j\hbar\omega }  
%\right) 
p^2  dp d\mu dx  
}{V_{ikm}} 
%\right.
\nonumber\\
&-\frac{4\pi}{V_{ikm}} 
%\left.
\int_{\Omega_{ikm}} 
f_h(x,p,\mu,t)  
%\left(
\sum_{j=-1}^{+1} c_j \, \chi(\varepsilon(p) - j\hbar\omega)  \left.  p^2(\varepsilon') \frac{dp'}{d\varepsilon'} \right|_{\varepsilon(p) - j\hbar\omega } 
%\right)
\, p^2 \, dp d\mu dx .
\nonumber
\end{align}

Regarding the time discretization, we will apply a TVD RK-DG scheme. These schemes are convex combinations of Euler methods. Therefore, it suffices if 
we consider the time evolution of the cell average in the DG scheme using a Forward Euler Method, so $\partial_t \bar{f}_{ikm} \approx (\bar{f}_{ikm}^{n+1} - \bar{f}_{ikm}^n)/\Delta t^n $, and  

%\newpage

\begin{align}
&
\bar{f}_{ikm}^{n+1} = \bar{f}_{ikm}^n
%\nonumber\\
-
%&  
\frac{\Delta t^n}{V_{ikm}} %\left[
 \int_{\partial_x \Omega_{km}}  \,  \widehat{H^{(x)}f_h}|_{x_{i+}} 
 \, p^2 dp d\mu  
-
 \int_{\partial_x \Omega_{km}}  \,  \widehat{H^{(x)}f_h}|_{x_{i-}}  
 \, p^2 dp d\mu  
%\right. 
\nonumber\\
-&\frac{\Delta t^n (1-\mu_{m+}^2)}{V_{ikm}}
%\left.
\int_{\partial_{\mu}\Omega_{ik}} 
\widehat{H^{(\mu)}f_h}|_{\mu_{m+}}
p dp dx
-
(1-\mu_{m-}^2)
\int_{\partial_{\mu}\Omega_{ik}} 
\widehat{H^{(\mu)}f_h}|_{\mu_{m-}} 
p dp dx
%\right]
\nonumber\\
- &\frac{\Delta t^n p^2_{k^+}}{V_{ikm}}
\int_{\partial_p \Omega_{im}}  \,
\widehat{H^{(p)}f_h}|_{p_{k+}} 
\, d\mu dx
-  
p^2_{k^-}
\int_{\partial_p \Omega_{im}}  \,
\widehat{H^{(p)}f_h}|_{p_{k-}} 
\, d\mu dx
+ %\frac{2\pi\Delta t^n}{V_{ikm}}\times
\nonumber\\
\sum_{j=-1}^{1} &%\left[  
\int_{\Omega_{ikm}} \frac{2\pi\Delta t^n}{V_{ikm}} 
%\left( 
 c_j  \chi(\varepsilon(p) + j\hbar\omega)
\int_{-1}^{1} d\mu' \left. 
f_h(x,p',\mu') \, p^2(\varepsilon') \frac{dp'}{d\varepsilon'}  \right|_{\varepsilon(p) + j\hbar\omega }  
%\right) 
p^2 dp d\mu dx  
%\right.
\nonumber\\
-&\frac{4\pi\Delta t^n}{V_{ikm}} \int_{\Omega_{ikm}}  
%\left.
f_h(x,p,\mu,t)  
%\left(
\sum_{j=-1}^{1} c_j  \chi(\varepsilon(p) - j\hbar\omega)  \left.  p^2(\varepsilon') \frac{dp'}{d\varepsilon'} \right|_{\varepsilon(p) - j\hbar\omega } 
%\right)
p^2 dp d\mu dx  ,
%\right],
\nonumber
\end{align}
or more briefly,
$\bar{f}_{ikm}^{n+1} = \bar{f}_{ikm}^n + \Gamma_T + \Gamma_C,
$
where the transport and collision terms for the cell average time evolution are %defined as
\begin{align}
\Gamma_T
& =  - \frac{\Delta t^n}{V_{ikm}} \left[
 \int_{\partial_x \Omega_{km}}  \,  \widehat{H^{(x)}f_h}|_{x_{i+}} 
 \, p^2 dp d\mu  
-
 \int_{\partial_x \Omega_{km}}  \,  \widehat{H^{(x)}f_h}|_{x_{i-}}  
 \, p^2 dp d\mu  
\right. 
\nonumber\\
& +  
p^2_{k^+}
\int_{\partial_p \Omega_{im}}  \,
\widehat{H^{(p)}f_h}|_{p_{k+}} 
\, d\mu dx
-  
p^2_{k^-}
\int_{\partial_p \Omega_{im}}  \,
\widehat{H^{(p)}f_h}|_{p_{k-}} 
\, d\mu dx
\nonumber\\
+ & (1-\mu_{m+}^2) 
\left.
\int_{\partial_{\mu}\Omega_{ik}} 
\widehat{H^{(\mu)}f_h}|_{\mu_{m+}}
p dp dx
-
(1-\mu_{m-}^2)
\int_{\partial_{\mu}\Omega_{ik}} 
\widehat{H^{(\mu)}f_h}|_{\mu_{m-}} 
p dp dx
\right],
\nonumber
\end{align}

\vspace{-.65cm}

\begin{align}
\Gamma_C
& = \frac{2\pi\Delta t^n}{V_{ikm}} 
%\left[
\int_{\Omega_{ikm}} 
%\left( 
\sum_{j=-1}^{1} c_j \, \chi(\varepsilon(p) + j\hbar\omega)
\int_{-1}^{1} d\mu' \left. 
f_h'%(x,p',\mu') 
p^2(\varepsilon') \frac{dp'}{d\varepsilon'}  \right|_{\varepsilon(p) + j\hbar\omega }  
%\right) 
\, p^2 \, dp d\mu dx  
%\right.
\nonumber\\
-
&\frac{4\pi\Delta t^n}{V_{ikm}}
%\left.
\int_{\Omega_{ikm}} 
f_h%(x,p,\mu,t)  
%\left(
\sum_{j=-1}^{1} c_j \, \chi(\varepsilon(p) - j\hbar\omega)  \left.  p^2(\varepsilon') \frac{dp'}{d\varepsilon'}  \right|_{\varepsilon(p) - j\hbar\omega } 
%\right)
\, p^2 \, dp d\mu dx  .
%\right] \, .
\nonumber
\end{align}

%\vspace{-.95cm}

\subsection{Positivity preservation for 1Dx-2Dp DG scheme}
We use the strategy of Zhang \& Shu in \cite{ZhangShu1,ZhangShu2} for conservation laws, applied as well in \cite{EECHXM-JCP} for conservative phase space collisionless transport in curvilinear
coordinates, and in \cite{CGP} for a Vlasov - Boltzmann problem with linear non-degenerate collisional forms, to preserve the positivity of the probability density function in our DG scheme treating the collision term as a source, this being possible since our collisional form is mass preserving. 
We will use a convex combination parameter $\eta \in [0,1]$
\vspace{-.1cm}
\begin{equation}
\bar{f}_{ikm}^{n+1} = 
\eta \cdot I +
(1-\eta)\cdot II,
\quad
I=
\bar{f}_{ikm}^n + {\Gamma_T}/{\eta},
\quad 
{II} = 
 \bar{f}_{ikm}^n + {\Gamma_C}/{(1-\eta)} ,
\end{equation}
%\vspace{-.1cm}
and we will find conditions such that $I$ and $II$ are positive,
to guarantee the positivity of the cell average of our numerical probability density function for the next time step. 
The positivity of the numerical solution to the pdf in the whole domain can be guaranteed just by applying the limiters in \cite{ZhangShu1,ZhangShu2} that preserve the cell average but modify the slope of the piecewise linear solutions in order to make the function non - negative.
%%%%%%%%%%%%%%%%%%%%%%%%%%%%%%%%%%%%%%%%%%%%%%%%%%%
%%%%%%%%%%%%%%%%%%%%%%%%%%%%%%%%%%%%%%%%%%%%%%%%
%\newpage
Regarding $I$, we derive its positivity conditions below.
\vspace{-.1cm}
\begin{align}
I &= 
 \bar{f}_{ikm}^n + \frac{\Gamma_T}{\eta}   
=
\frac{\int_{\Omega_{ikm}} f_h \,  p^2 \, dp d\mu dx }{ V_{ikm} }\!
- \! \frac{\Delta t^n/\eta}{  V_{ikm}} \left[
 \int_{\partial_x \Omega_{km}} \!\! 
 ( \widehat{H^{(x)}f_h}|^{x_{i+}}_{x_{i-}} ) p^2 dp d\mu  
\right. 
\nonumber\\
 +& 
p^2_{k^+}
\int_{\partial_p \Omega_{im}} \! %\left
(
\widehat{H^{(p)}f_h}|_{p_{k+}} 
\!-\!
\widehat{H^{(p)}f_h}|_{p_{k-}} %\right
)
 d\mu dx 
 \nonumber\\
 &%\left.
\!+\!
\left.
\int_{\partial_{\mu}\Omega_{ik}}
\left( 
(1\!-\!\mu_{m+}^2) 
\widehat{H^{(\mu)}f_h}|_{\mu_{m+}}
\!-\!
(1\!-\!\mu_{m-}^2)
\widehat{H^{(\mu)}f_h}|_{\mu_{m-}}
\right)
p  dp dx
\right].
\nonumber
\end{align}
We will split the cell average convexly, as in
$\left\{s_l \right\}_{l=1}^3$ s.t. 
$s_1 + s_2 + s_3 = 1$.
Then
\begin{align}
I &=
 \frac{ 1 }{ V_{ikm} }
\left[
(s_1 + s_2 + s_3) \int_{\Omega_{ikm}} f_h \,  p^2 \, dp d\mu dx
\right.
\nonumber\\
&
-  \frac{\Delta t^n}{ \eta } \left(
 \int_{\partial_x \Omega_{km}}  \,  \widehat{H^{(x)}f_h}|_{x_{i+}} 
 \, p^2 dp d\mu  
-
 \int_{\partial_x \Omega_{km}}  \,  \widehat{H^{(x)}f_h}|_{x_{i-}}  
 \, p^2 dp d\mu  
\right. 
\nonumber\\
&  +
p^2_{k^+}
\int_{\partial_p \Omega_{im}}  \,
\widehat{H^{(p)}f_h}|_{p_{k+}} 
\, d\mu dx
-  
p^2_{k^-}
\int_{\partial_p \Omega_{im}}  \,
\widehat{H^{(p)}f_h}|_{p_{k-}} 
\, d\mu dx
\nonumber\\
& +
(1-\mu_{m+}^2)
\int_{\partial_{\mu}\Omega_{ik}} 
\widehat{H^{(\mu)}f_h}|_{\mu_{m+}}
\, p \, dp dx
%\nonumber\\
%&-
-
\left.
\left.
(1-\mu_{m-}^2)
\int_{\partial_{\mu}\Omega_{ik}} 
\widehat{H^{(\mu)}f_h}|_{\mu_{m-}} 
\, p \, dp dx
\right)
\right]
\nonumber\\
%\end{align}
%\begin{align}
= &
 \frac{ 1 }{ V_{ikm} }
\left[
 \int_{\partial_x \Omega_{km}} 
\left\lbrace 
s_1 \int_{x_{i-}}^{x_{i+}} f_h p^2  dx
-  \frac{\Delta t^n}{ \eta } %\left(
\sum_\pm \pm
\widehat{H^{(x)}f_h}|_{x_{i\pm}} 
 p^2   
%-
%\widehat{H^{(x)}f_h}|_{x_{i-}}  
% p^2   
%\right)
\right\rbrace 
dp d\mu 
\right.
\nonumber\\
 +&   \int_{\partial_p \Omega_{im}} 
\left\lbrace
s_2 \int_{p_{k-}}^{p_{k+}}
f_h \,  p^2 \, dp 
-  \frac{\Delta t^n}{ \eta } \left(
p^2_{k^+}
\widehat{H^{(p)}f_h}|_{p_{k+}} 
-  
p^2_{k^-}
\widehat{H^{(p)}f_h}|_{p_{k-}} 
\right)
\right\rbrace
d\mu dx
\nonumber\\
&+
 \int_{\partial_{\mu}\Omega_{ik}} 
 \left.
 \left\lbrace 
s_3 \int_{\mu_{m-}}^{\mu_{m+}}
 f_h \,  p^2 \, d\mu 
-  \frac{\Delta t^n}{ \eta } \left[
\sum_\pm \pm
(1-\mu_{m\pm}^2)
\widehat{H^{(\mu)}f_h}|_{\mu_{m\pm}}
 p 
%\, - \,
%(1-\mu_{m-}^2)
%\widehat{H^{(\mu)}f_h}|_{\mu_{m-}} 
%\, p 
\right]
\right\rbrace dp dx
\right].
\nonumber
\end{align}
%inside a given interval, rectangle or element. 
We can integrate 
all the functions
above by Gauss-Lobatto quadrature.
We use it for the integrals of $f_h \, p^2$ over intervals, so that the endpoints values can balance the flux terms of boundary integrals, obtaining then CFL conditions. So
\begin{align}
I & = 
 \frac{ 1 }{ V_{ikm} }
\left[
 \int_{\partial_x \Omega_{km}} 
\left\lbrace 
s_1 %\int_{x_{i-}}^{x_{i+}} 
\sum_{\hat{q}=1}^N \hat{w}_{\hat{q}}
f_h|_{x_{\hat{q}}} \,  p^2 \,  \Delta x_i
\right.
\right.
%\nonumber\\
%&
- 
\left. 
\left. 
\frac{\Delta t^n}{ \eta } \left(
\widehat{H^{(x)}f_h}|^{x_{i+}}_{x_{i-}}      
\right)p^2
\right\rbrace 
dp \, d\mu 
\right.
\nonumber\\
 + &  \int_{\partial_p \Omega_{im}} 
\left\lbrace
s_2 %\int_{p_{k-}}^{p_{k+}}
\sum_{\hat{r}=1}^N \hat{w}_{\hat{r}}
f_h|_{p_{\hat{r}}} \,  p^2_{\hat{r}} \, \Delta p_k 
-  \frac{\Delta t^n}{ \eta } \left(
\sum_\pm \pm
p^2_{k^\pm}
\widehat{H^{(p)}f_h}|_{p_{k\pm}} 
\right)
\right\rbrace
d\mu \, dx +
\nonumber\\
 &
 \int_{\partial_{\mu}\Omega_{ik}} 
\left.
 \left\lbrace 
s_3 %\int_{\mu_{m-}}^{\mu_{m+}}
\sum_{\hat{s}=1}^N \hat{w}_{\hat{s}}
 f_h|_{\mu_{\hat{s}}} p^2 \Delta \mu_m 
-  \frac{\Delta t^n}{ \eta } %\left[
\sum_\pm \pm
(1-\mu_{m\pm}^2)
\widehat{H^{(\mu)}f_h}|_{\mu_{m\pm}}
p 
%\right]
\right\rbrace dp dx
\right] 
\nonumber\\
&
= p^2 
dp d\mu 
\times 
\nonumber\\
 & 
\left[
 \int_{\partial_x \Omega_{km}} 
s_1 \Delta x_i
\left\lbrace 
  \sum_{\hat{q}=2}^{N-1} \hat{w}_{\hat{q}}
f_h|_{x_{\hat{q}}}  
+
%s_1 \Delta x_i  %\int_{x_{i-}}^{x_{i+}} 
\left(
\hat{w}_1 f_h|_{x_{i-}}^+  +
\hat{w}_N f_h|_{x_{i+}}^-  
\right)
%\right.\right.
%\nonumber\\
%&
-
%\left.\left.
\frac{\Delta t^n
\left(
\widehat{H^{(x)}f_h}|^{x_{i+}}_{x_{i-}}  
\right)
}{ \eta s_1 \Delta x_i} 
\right\rbrace 
\right.
\nonumber\\
&  +   \int_{\partial_p \Omega_{im}} 
s_2 \Delta p_k  %\int_{p_{k-}}^{p_{k+}}
\left\lbrace
\left(
\hat{w}_{1} f_h|_{p_{k-}}^+ \,  p^2_{k-}  +
\hat{w}_{N}  f_h|_{p_{k+}}^- \,  p^2_{k+} 
\right)
+
\sum_{\hat{r}=2}^{N-1} \hat{w}_{\hat{r}}
f_h|_{p_{\hat{r}}} \,  p^2_{\hat{r}} \, 
\right.
\nonumber\\
&-
\left.
  \frac{\Delta t^n}{ \eta s_2 \Delta p_k } \left(
p^2_{k^+}
\widehat{H^{(p)}f_h}|_{p_{k+}} 
-  
p^2_{k^-}
\widehat{H^{(p)}f_h}|_{p_{k-}} 
\right)
\right\rbrace
d\mu \, dx
\nonumber\\
& +
 \int_{\partial_{\mu}\Omega_{ik}} 
s_3 \,  p^2 \, \Delta \mu_m 
%\int_{\mu_{m-}}^{\mu_{m+}}
\left\lbrace 
\left(
\hat{w}_{1} f_h|_{\mu_{m-}}^+
+
\hat{w}_{N} f_h|_{\mu_{m+}}^-
\right)
+
\sum_{\hat{s}=2}^{N-1} 
\hat{w}_{\hat{s}} f_h|_{\mu_{\hat{s}}}
\right.
\nonumber\\
&-
\left.
\left. 
  \frac{\Delta t^n}{ \eta s_3 p \, \Delta \mu_m } \left[
  \sum_\pm \pm
(1-\mu_{m\pm}^2)
\widehat{H^{(\mu)}f_h}|_{\mu_{m\pm}}
\right] 
\right\rbrace  dp dx
\right] \frac{ 1 }{ V_{ikm} } \, .
\nonumber
\end{align}
%\newpage
We reorganize the terms involving the endpoints, which are in parenthesis. So
\begin{align}
I & = 
 \frac{ 1 }{ V_{ikm} }
\left[
 \int_{\partial_x \Omega_{km}} 
s_1 \Delta x_i
\left\lbrace 
%s_1 \Delta x_i  %\int_{x_{i-}}^{x_{i+}} 
\left(
\hat{w}_1 f_h|_{x_{i-}}^+  
+  \frac{\Delta t^n}{ \eta s_1 \Delta x_i} 
\widehat{H^{(x)}f_h}|_{x_{i-}}  
\right)
\right.\right.\nonumber\\
&+\left.\left.
\left(
\hat{w}_N f_h|_{x_{i+}}^-  
-  \frac{\Delta t^n}{ \eta s_1 \Delta x_i} 
\widehat{H^{(x)}f_h}|_{x_{i+}} 
\right)
\right.
\right.
%\nonumber\\
%&
+
\left. \,
  \sum_{\hat{q}=2}^{N-1} \hat{w}_{\hat{q}}
f_h|_{x_{\hat{q}}}  
\right\rbrace 
p^2 
dp d\mu 
 + 
\nonumber\\
& 
 \int_{\partial_p \Omega_{im}} 
s_2 \Delta p_k  %\int_{p_{k-}}^{p_{k+}}
\left\lbrace
\sum_{\hat{r}=2}^{N-1} \hat{w}_{\hat{r}}
f_h|_{p_{\hat{r}}} \,  p^2_{\hat{r}} 
\right.
%\nonumber\\
%&
+
\left.
p^2_{k^-}
\left(
\hat{w}_{1} f_h|_{p_{k-}}^+ \, 
+  \frac{\Delta t^n}{ \eta s_2 \Delta p_k } 
\widehat{H^{(p)}f_h}|_{p_{k-}} 
\right)
\right.\nonumber\\
&+
\left.
p^2_{k+} 
\left(
\hat{w}_{N}  f_h|_{p_{k+}}^- \,
-  \frac{\Delta t^n}{ \eta s_2 \Delta p_k } 
\widehat{H^{(p)}f_h}|_{p_{k+}} 
\right)
\right\rbrace
d\mu \, dx +
\nonumber\\
& 
 \int_{\partial_{\mu}\Omega_{ik}} 
dx \, dp \, p^2 \,
s_3 \,  \Delta \mu_m 
%\int_{\mu_{m-}}^{\mu_{m+}}
\left\lbrace 
\sum_{\hat{s}=2}^{N-1} 
\hat{w}_{\hat{s}} f_h|_{\mu_{\hat{s}}} 
\right.
%\nonumber\\
%&
+\left.
\left(
\hat{w}_{1} f_h|_{\mu_{m-}}^+
+  \frac{\Delta t^n (1-\mu_{m-}^2)
\widehat{H^{(\mu)}f_h}|_{\mu_{m-}} 
}{ \eta s_3 p \, \Delta \mu_m }
\right)
\right.
\nonumber\\
&+
\left.
\left. 
\left(
\hat{w}_{N} f_h|_{\mu_{m+}}^-
-  \frac{\Delta t^n (1-\mu_{m+}^2) }{ \eta s_3 p \, \Delta \mu_m } 
\widehat{H^{(\mu)}f_h}|_{\mu_{m+}}
\right)
\right\rbrace  
\right] .
\nonumber
\end{align}
To guarantee the positivity of $I$, assuming that the terms $f_h|_{x_{\hat{q}}}, \, f_h|_{p_{\hat{r}}}, \, f_h|_{\mu_{\hat{s}}} $
are positive at time $t^n$, we only need that the terms in parenthesis related to interval endpoints are positive. 
Since $\hat{w}_1 = \hat{w}_N$ for Gauss-Lobatto Quadrature, 
we want 
%the non-negativity of the terms 
\begin{align}
0 & \leq 
%\left(
\hat{w}_N f_h|_{x_{i\pm}}^{\mp}  
\mp  \frac{\Delta t^n}{ \eta s_1 \Delta x_i} 
\widehat{H^{(x)}f_h}|_{x_{i\pm}} 
%\right)
,
\nonumber\\
0 & \leq  
%\left(
\hat{w}_{N}  f_h|_{p_{k\pm}}^{\mp} \,
\mp \frac{\Delta t^n}{ \eta s_2 \Delta p_k } 
\widehat{H^{(p)}f_h}|_{p_{k\pm}} 
%\right)
,
%\\
 & 0\leq 
%\left(
\hat{w}_{N} f_h|_{\mu_{m\pm}}^{\mp}
\mp  \frac{\Delta t^n (1-\mu_{m\pm}^2) }{ \eta s_3 p \, \Delta \mu_m } 
\widehat{H^{(\mu)}f_h}|_{\mu_{m\pm}}
%\right) \, 
.
\nonumber
\end{align}
We have used the notation below for the numerical flux, given by the upwind rule
\begin{align}
 \widehat{H^{(x)}f}|_{x_{i\pm}}
&=
\partial_p \varepsilon
\widehat{ f_h  \mu }|_{x_{i\pm}} 
= \partial_p \varepsilon
\left[
\left(\frac{\mu + |\mu|}{2} \right) f_h |^-_{x_{i\pm}} +
\left(\frac{\mu - |\mu|}{2} \right) f_h |^+_{x_{i\pm}}
\right],
\nonumber\\
\widehat{H^{(p)}f}|_{p_{k\pm}}
&=
- q \widehat{ E f_h \mu } |_{p_{k\pm}} 
=
q
\left[
\left(\frac{ |E \mu | -E \mu  }{2}\right) f_h |^-_{p_{k\pm}} -
\left(\frac{E \mu  + |E \mu |}{2} \right) f_h |^+_{p_{k\pm}}
\right],
\nonumber\\
\widehat{H^{(\mu)}f}|_{\mu_{m\pm}}
&=
- q  \widehat{ E f_h }|_{\mu_{m\pm}} =
q
\left[
\left(\frac{-E + |E|}{2} \right) f_h |^-_{\mu_{m\pm}} +
\left(\frac{-E - |E|}{2} \right) f_h |^+_{\mu_{m\pm}}
\right] \, .
\nonumber
\end{align}
We have assumed the positivity of the pdf evaluated at Gauss-Lobatto points, which include endpoints, so we know that
$ f_h|_{x_{i\pm}}^{\mp} , \, f_h|_{p_{k\pm}}^{\mp} , \,  f_h|_{\mu_{m\pm}}^{\mp} $ are positive. The worst case scenario for positivity is having negative flux terms. In that case,
\begin{align}
0 & \leq 
\hat{w}_N f_h|_{x_{i\pm}}^{\mp}  
-  \frac{\Delta t^n}{ \eta s_1 \Delta x_i} 
\partial_p \varepsilon \, |\mu| f_h|_{x_{i\pm}}^{\mp}
=
f_h|_{x_{i\pm}}^{\mp}
\left(
\hat{w}_N   
-  \frac{\Delta t^n}{ \eta s_1 \Delta x_i} 
\partial_p \varepsilon \, |\mu| 
\right),
\nonumber\\
0 & \leq  
\hat{w}_{N}  f_h|_{p_{k\pm}}^{\mp} \,
- \frac{\Delta t^n}{ \eta s_2 \Delta p_k } 
q |E(x,t) \mu | f_h|_{p_{k\pm}}^{\mp}
%\nonumber\\
%&
=
f_h|_{p_{k\pm}}^{\mp} \,
\left( \hat{w}_{N}
- \frac{\Delta t^n}{ \eta s_2 \Delta p_k } 
q |E(x,t) \mu | 
\right),
\nonumber\\
0 & \leq 
\hat{w}_{N} f_h|_{\mu_{m\pm}}^{\mp}
-\frac{\Delta t^n (1-\mu_{m\pm}^2) }{ \eta s_3 p  \Delta \mu_m } 
q|E| f_h|_{\mu_{m\pm}}^{\mp}
%\nonumber\\
%&
=
 f_h|_{\mu_{m\pm}}^{\mp}
\left(
\hat{w}_{N}
-\frac{\Delta t^n (1-\mu_{m\pm}^2) }{ \eta s_3 p  \Delta \mu_m } 
q|E| 
\right)  .
\nonumber
\end{align}
We need then for the worst-case scenario that 
\begin{align}
\hat{w}_N   
& \geq 
\frac{\Delta t^n}{ \eta s_1 \Delta x_i} 
\partial_p \varepsilon \, |\mu| ,
\,
%\nonumber\\
\hat{w}_{N}
& \geq 
 \frac{\Delta t^n}{ \eta s_2 \Delta p_k } 
q |E(x,t) \mu | ,
\,
%\nonumber\\
\hat{w}_{N}
& \geq  
\frac{\Delta t^n (1-\mu_{m\pm}^2) }{ \eta s_3 p \, \Delta \mu_m } 
q|E(x,t)| \, ,  
\nonumber
\end{align}
or equivalently,
\begin{align}
\hat{w}_N   \frac{ \eta s_1 \Delta x_i}{ 
\partial_p \varepsilon \, |\mu|
}
& \geq 
 {\Delta t^n} ,
%\nonumber\\
\quad 
\hat{w}_{N}
 \frac{ \eta s_2 \Delta p_k }{ 
q |E(x,t) \mu | 
}
& \geq  {\Delta t^n},
\quad
%\nonumber\\
\hat{w}_{N}
\frac{ \eta s_3  \, \Delta \mu_m \, p }{ 
q|E(x,t)| 
(1-\mu_{m\pm}^2)
}
& \geq  \Delta t^n  \, .
\nonumber
\end{align}
Therefore the CFL conditions imposed to satisfy the positivity of $I$ are
\begin{align}
\frac{ \eta s_1 \hat{w}_N   \Delta x_i}{ 
\max_{\hat{r}}
\partial_p \varepsilon(p_{\hat{r}}) \cdot  \,
\max_{\pm} |\mu_{m\pm}|
}
& \geq  
 {\Delta t^n} ,
%\nonumber\\
\,
\frac{ \eta s_2 \hat{w}_{N}
  \Delta p_k }{ 
q \max_{\hat{q}} |E(x_{\hat{q}},t)| 
\cdot
\max_{\pm} |\mu_{m\pm} | 
}
& \geq  {\Delta t^n},
\nonumber\\
\frac{ \eta s_3 \hat{w}_{N} \Delta \mu_m
\cdot p_{k-} \,  }{ 
q
\max_{\hat{q}}
|E(x_{\hat{q}},t)| 
\cdot
\max_{\pm}
(1-\mu_{m\pm}^2)
}
& \geq  \Delta t^n  \, .
\nonumber
\end{align}
%%%%%%%%%%%%%%%%%%%%%%%%%%%%%%%%%%%%%%%%%%%%%%%%%%%%%%

%\newpage
Regarding $II$, there are several ways to guarantee its positivity.
One possible way to guarantee it is positive is given below,
by separating the gain and the loss part, combining the cell average
with the loss term and deriving a CFL condition related to the collision frequency, and imposing a positivity condition on 
the points where the gain term is evaluated, which differs
for inelastic scatterings from the previous Gauss-Lobatto points because of the addition or subtraction of the phonon energy. We would need an additional set of points in which to impose positivity to guarantee positivity of $II$ as a whole, since                                                                                                                                                                                                      
\begin{align}
&
II  = 
\bar{f}_{ikm}^n + \frac{\Gamma_C}{1-\eta}   =
\nonumber\\
&\bar{f}_{ikm}^n
 + \frac{\frac{\Delta t^n 2\pi}{V_{ikm}}}{1-\eta}%\times
%\nonumber\\
%&
%\left[  
[
\int_{\Omega_{ikm}} 
%\left( 
\sum_{j=-1}^{1} c_j \, \chi(\varepsilon + j\hbar\omega)
\int_{-1}^{1} d\mu' \left. 
f_h' \, p^2(\varepsilon') \frac{dp}{d\varepsilon}  \right|_{\varepsilon + j\hbar\omega }  
%\right) 
\, p^2 \, dp d\mu dx  
%\right.
\nonumber\\
&-
%\left.
{2} 
\int_{\Omega_{ikm}} 
f_h
\underbrace{
%\left(
\sum_{j=-1}^{+1} c_j \, \chi(\varepsilon(p) - j\hbar\omega)  \left.  p^2(\varepsilon') \frac{dp'}{d\varepsilon'} \right|_{\varepsilon' = \varepsilon(p) - j\hbar\omega } 
%\right)
}_{ = \, \nu(p) > 0, \quad \mbox{since} \quad \partial_p\varepsilon > 0, \, c_j > 0, \, \chi \geq 0 }
\, p^2 \, dp d\mu dx  
]
%\right] 
=\frac{1}{V_{ikm}}\times
\left[
\right.
\nonumber\\
&
\left.
\frac{2\pi \Delta t^n}{1-\eta}
\right.%\nonumber\\
%&
\left. 
\int_{\Omega_{ikm}} 
%\left( 
\sum_{j=-1}^{1} c_j \, \chi(\varepsilon + j\hbar\omega)
\int_{-1}^{1} d\mu' \left. %\left[
f_h(x,p(\varepsilon'),\mu') \, p^2(\varepsilon') \frac{dp}{d\varepsilon} %\right] 
\right|_{\varepsilon + j\hbar\omega }  
%\right) 
\, p^2 \, dp d\mu dx  
\right. +
\nonumber\\
&
\int_{\Omega_{ikm}} f_h dV
-\frac{4\pi \Delta t^n}{1-\eta}%\times 
%\nonumber\\
%&
\left.
\int_{\Omega_{ikm}} 
f_h  
\left(
\sum_{j=-1}^{+1} c_j \, \chi(\varepsilon')  \left.  p^2(\varepsilon') \frac{dp'}{d\varepsilon'}  \right|_{\varepsilon'=\varepsilon(p) - j\hbar\omega } 
\right)
\, p^2 \, dp d\mu dx  
\right] 
\nonumber\\
& =
\frac{1}{V_{ikm}}
\left[
\frac{2\pi \Delta t^n}{ 1-\eta}
\sum_{j=-1}^{1} c_j
\int_{\Omega_{ikm}} 
\int_{-1}^{1} d\mu' \left. 
f_h' \, p^2(\varepsilon') \frac{dp'}{d\varepsilon'} \right|_{\varepsilon(p) + j\hbar\omega }  
\chi(\varepsilon(p) + j\hbar\omega)
\, p^2 \, dp d\mu dx  
\right. 
\nonumber\\
&
+
\left.
\int_{\Omega_{ikm}} 
f_h 
(
%\left( 
1 - \frac{4\pi \Delta t^n}{1-\eta}
\sum_{j=-1}^{1} c_j \, \chi(\varepsilon')  \left.  p^2(\varepsilon') \frac{dp}{d\varepsilon}  
\right|_{\varepsilon' = \varepsilon(p) - j\hbar\omega } 
%\right)
)
\, p^2 \, dp d\mu dx  
\right] = \frac{1}{V_{ikm}} \times 
\left[
\right.
\nonumber\\
&
\left.
\frac{2\pi \Delta t^n}{1-\eta}
\sum_{j=-1}^{+1} c_j |\Omega_{ikm}|
\right.
%\nonumber\\
%&
\left. 
\underbrace{ 
\sum_{s,r,q}  w_{s,r,q} 
f_h(x_s,p'(\varepsilon(p_r) + j\hbar\omega),\mu'_q) \left.
p^2(\varepsilon') \frac{dp}{d\varepsilon} 
\chi(\varepsilon')
\right|_{\varepsilon(p_r) + j\hbar\omega}  
\, p_r^2  }_{ > 0 \, \mbox{if} \, f_h(x_s,p'(\varepsilon(p_r) + j\hbar\omega),\mu'_q) \, > 0 . \, \mbox{Additional positivity point set}  }
\right.
\nonumber\\
&
+
%\left.
\int_{\Omega_{ikm}} 
f_h  
\underbrace{
(
%\left( 
1 - \frac{4\pi \Delta t^n}{1-\eta}
\sum_{j=-1}^{+1} c_j \, \chi(\varepsilon(p) - j\hbar\omega)  \left.  p^2(\varepsilon') \frac{dp'}{d\varepsilon'}  \right|_{\varepsilon(p) - j\hbar\omega } 
%\right)
)
}_{> 0 \implies \frac{ (1-\eta)}{  \max_{GQp} \sum_{j=-1}^{1} c_j \, \chi(\varepsilon(p) - j\hbar\omega)  \left.  p^2(\varepsilon') \frac{dp}{d\varepsilon}  \right|_{ \varepsilon - j\hbar\omega } } > \Delta t  }
p^2 dp d\mu dx  
],
%\right],
\nonumber
\end{align}
where the notation for the measure of the elements is
\begin{equation}
|\Omega_{ikm}| = \Delta x_i \Delta p_k \Delta \mu_m  \, .
\end{equation}
%%%%%%%%%%%%%%%%%%%%%%%%%%%%%%%%%%%%%%%%%%%%%%%%%
Another possible way to guarantee positivity for $II$ 
is by considering the collision term as a whole.
The difference between the gain and loss
integrals will give us a smaller source term, and therefore a more relaxed CFL condition for $\Delta t^n $. We have %that
\begin{align}
&
II  =  \bar{f}_{ikm}^n + \frac{\Gamma_C}{1-\eta}   = \, 
\frac{\int_{\Omega_{ikm}} f_h dV}{V_{ikm}} + 
\frac{\Delta t^n \, \int_{\Omega_{ikm}}Q(f_h)dV}{ (1-\eta)V_{ikm}}
\,
= 
\,
\frac{\int_{\Omega_{ikm}} f_h dV}{V_{ikm}} +
\nonumber\\
&
\left[
\frac{1}{2}  
\int_{\Omega_{ikm}} 
\sum_{j=-1}^{+1} c_j \, \chi(\varepsilon(p) + j\hbar\omega)
\int_{-1}^{1} d\mu' \left. %\left[
f_h' \, p^2(\varepsilon') \frac{dp'}{d\varepsilon'} %\right] 
\right|_{ \varepsilon(p) + j\hbar\omega }  
\, p^2 \, dp d\mu dx  
\right.
-
\nonumber\\
&
%\left.
\int_{\Omega_{ikm}} 
f_h
\underbrace{
\sum_{j=-1}^{+1} c_j \, \chi(\varepsilon(p) - j\hbar\omega)  \left.  p^2(\varepsilon') \frac{dp'}{d\varepsilon'} \right|_{\varepsilon(p) - j\hbar\omega } 
}_{ = \, \nu(p) > 0, \quad \mbox{since} \quad \partial_p\varepsilon > 0, \quad c_j > 0, \quad \chi \geq 0 }
\, p^2 \, dp d\mu dx  
%\right]
]
\frac{\frac{4\pi\Delta t^n}{V_{ikm}}}{(1-\eta)} = \frac{1}{V_{ikm}}\times
\nonumber
\end{align}
\begin{align}
&
\left[
\frac{2\pi \Delta t^n}{ (1-\eta)}
\left\lbrace
\int_{\Omega_{ikm}} 
%\left( 
\sum_{j=-1}^{+1} c_j \, \chi(\varepsilon(p) + j\hbar\omega)
\int_{-1}^{1} d\mu' \left. %\left[
f_h' p^2(\varepsilon') \frac{dp'}{d\varepsilon'} %\right] 
\right|_{\varepsilon(p) + j\hbar\omega }  
%\right) 
p^2  dp d\mu dx  -
\right.
\right. 
\nonumber\\
&
\left.
2 
\left.
\int_{\Omega_{ikm}} 
f_h  
%\left(
\sum_{j=-1}^{+1} c_j \, \chi(\varepsilon(p) - j\hbar\omega)  \left.  p^2(\varepsilon') \frac{dp'}{d\varepsilon'}  \right|_{\varepsilon(p) - j\hbar\omega } 
%\right)
\, p^2 \, dp d\mu dx  
\right\rbrace
+ \int_{\Omega_{ikm}} f_h dV
\right]=
\nonumber
\end{align}
\begin{align}
&
[
%\left[
\frac{ \Delta t^n}{1-\eta}
\int_{\Omega_{ikm}} 
\underbrace{
[
2\pi
\sum_{j=-1}^{+1} c_j \, 
\int_{-1}^{1} d\mu' \left.
f_h' \, p^2(\varepsilon') \frac{dp'}{d\varepsilon'} \chi(\varepsilon')  \right|_{\varepsilon(p) + j\hbar\omega }
- f_h \nu(p)   
]
}_{Q(f_h)}
\, p^2 \, dp d\mu dx  
%\right. 
\nonumber\\
&
\left. 
+ \int_{\Omega_{ikm}} f_h dV
\right]\frac{1}{V_{ikm}} \, ,
%\nonumber\\
%&&
\quad 
\nu(p)  =
4 \pi
\sum_{j=-1}^{+1}  c_j \,   \left.  p^2(\varepsilon') \frac{dp'}{d\varepsilon'} \chi(\varepsilon') \right|_{\varepsilon(p) - j\hbar\omega } 
= \nu(\varepsilon(p)) \, .
\nonumber
\end{align}

%\newpage
%We will treat the cell average of the collision term
%(gain minus loss) as a whole, considering it a source term,
%and we will apply the same techniques for positivity preserving DG schemes for transport equations with source terms. 

We have then %that
\begin{align}
II  &=  \bar{f}_{ikm}^n + \frac{\Gamma_C}{1-\eta}   = \, 
\frac{\int_{\Omega_{ikm}} f_h dV}{V_{ikm}} + 
\frac{\Delta t^n \, \int_{\Omega_{ikm}}Q(f_h)dV}{ (1-\eta)V_{ikm}}
\,
= 
\,
\nonumber\\
&=   
\frac{1}{V_{ikm}} 
\left[
\int_{\Omega_{ikm}} f_h \, p^2 \, dp d\mu dx
+ 
\frac{\Delta t^n }{ (1-\eta) }
\int_{\Omega_{ikm}}Q(f_h) \, p^2 \, dp d\mu dx
\right]
\, ,
\nonumber\\
Q(f_h) 
&= 
2\pi
\sum_{j=-1}^{+1} c_j \, 
\int_{-1}^{+1} d\mu' 
\left. 
%\left[
f_h(x,p(\varepsilon'),\mu') \, p^2(\varepsilon') \frac{dp'}{d\varepsilon'} \chi(\varepsilon') 
%\right] 
\right|_{\varepsilon' = \varepsilon(p) + j\hbar\omega }
- f_h \nu(p)   
 \, ,
\nonumber\\
\nu(p) 
& = 
4 \pi
\sum_{j=-1}^{+1}  c_j \,   \left. \left[ p^2(\varepsilon') \frac{dp'}{d\varepsilon'} \chi(\varepsilon') \right] \right|_{\varepsilon' = \varepsilon(p) - j\hbar\omega } 
= \nu(\varepsilon(p)) \, .
\nonumber
\end{align}
We want $II$ to be positive.
If the collision operator part was negative,
we choose the time step $\Delta t^n$ such that $II$ is positive on total. We will get this way our CFL condition in order to guarantee the positivity of $II$. We want the following,
\begin{align}
II  &=
\frac{1}{V_{ikm}} 
\int_{\Omega_{ikm}} 
\left[
f_h(x,p,\mu ,t) \,
+ 
\frac{\Delta t^n }{ (1-\eta) }
Q(f_h) (x,p,\mu ,t) 
\right]
\, p^2 \, dp d\mu dx
\, \geq 0 ,
\nonumber\\
II   &=  
\frac{|\Omega_{ikm}| }{V_{ikm}} 
\sum_{q,r,s} 
w_q w_r w_s
\left[
f_h(x_q,p_r,\mu_s ,t) \,
+ 
\frac{\Delta t^n }{ (1-\eta) }
Q(f_h) (x_q,p_r,\mu_s ,t) 
\right]
\, p^2_r 
\, \geq 0 .
\nonumber
\end{align}
If $0 >  Q(f_h) $ for any of the points
$(x_q, p_r, \mu_s)$ at time $t = t^n$, 
then choose $\Delta t^n$ s. t.
\begin{align}
0 
& \leq 
f_h(x_q,p_r,\mu_s ,t) \,
+ 
\frac{\Delta t^n }{ (1-\eta) }
Q(f_h) (x_q,p_r,\mu_s ,t) ,
\nonumber\\
0 
& \leq  
f_h(x_q,p_r,\mu_s ,t) \,
- 
\frac{\Delta t^n }{ (1-\eta) }
|Q(f_h)| (x_q,p_r,\mu_s ,t) ,
%\nonumber\\
& 
\Delta t^n
\leq 
\frac{ (1 - \eta) f_h(x_q,p_r,\mu_s ,t)  }{ |Q(f_h)| (x_q,p_r,\mu_s ,t)  } \, .
\nonumber
\end{align}
Our CFL condition in this case would be then
\begin{align}
\Delta t^n
&\leq 
(1 - \eta)
%\min_{q,r,s \, s.t. \, Q(f_h)(x_q,p_r,\mu_s) < 0}
\min_{Q(f_h)(x_q,p_r,\mu_s,t^n) < 0}
\left\lbrace
\frac{  f_h(x_q,p_r,\mu_s ,t^n)  }{ |Q(f_h)| (x_q,p_r,\mu_s ,t^n)  }
\right\rbrace \, .
\end{align}
The minimum for the CFL condition is taken 
over the subset of Gaussian Quadrature points
$ (x_q,p_r,\mu_s) $
inside the cell $\Omega_{ikm}$ 
%(whichever the chosen quadrature rule was)
over which 
$$ Q(f_h)(x_q,p_r,\mu_s, t^n) < 0 .$$ 
This subset of points might be different for each
time $t^n$. \\ \, \\
We have figured out the respective CFL conditions 
for the transport and collision parts. 
Finally, we only need to choose
the optimal parameter $\eta$ that gives us the most 
relaxed CFL condition for $ \Delta t^n$ such that positivity is preserved for the cell average at the next time, $\bar{f}_{ikm}^{n+1} $. 
The positivity of the whole numerical solution 
to the pdf, not just its cell average, can be guaranteed by applying the limiters in \cite{ZhangShu1,ZhangShu2}, which preserve the cell average but modify the slope of the piecewise linear solutions in order to make the function non - negative in case it was negative before.

%\begin{lemma}
%Test Lemma.
%\end{lemma}

\section*{Acknowledgments}
This  research was partially supported by NSF grants\\  NSF-RNMS DMS-1107465 (KI-Net),  DMS 1715515,  DMS 2009736, and DOE DE-SC0016283
project Simulation Center for Runaway Electron Avoidance and Mitigation. 
Support from the Oden Institute of Computational Engineering and Sciences at the University of Texas at Austin is gratefully acknowledged.
The authors gratefully acknowledge as well the support of the Hausdorff Research Institute for Mathematics (Bonn), through the Junior Trimester Program on Kinetic Theory. The first author gratefully acknowledges too start-up support from the University of Texas at San Antonio Departments of Mathematics and Physics \& Astronomy. 
\bibliographystyle{siamplain}
%\bibliography{references}

\end{document}